\def\Z{{\mathbb Z}}
\def\ind{{\rm ind}}
\def\GL{{\rm GL}}
\def\ct{{\rm ct}}
\def\Gal{{\rm Gal}}
\def\O{{\mathcal O}}
\def\P{{\mathbb P}}
\def\A{{\mathbb A}}
\def\Disc{{\rm Disc}}
\def\char{{\rm char}}
\def\len{{\rm len}}
\def\Aut{{\rm Aut}}
\def\R{{\mathbb R}}
\def\F{{\mathbb F}}
\def\Q{{\mathbb Q}}
\def\Z{{\mathbb Z}}
\def\P{{\mathbb P}}
\def\F{{\mathbb F}}
\def\Q{{\mathbb Q}}
\def\C{{\mathbb C}}
\def\m1{{\textrm{\textbf{Id}}}}
\def\Res{{\textrm{Res}}}
\def\Ann{{\textrm{Ann}}}
\newtheorem{theorem}{Theorem}
\newtheorem{corollary}[theorem]{Corollary}
\newtheorem{lemma}[theorem]{Lemma}
\newtheorem{remark}[theorem]{Remark}
\newtheorem{proposition}[theorem]{Proposition}
\newenvironment{proof}{\noindent {\bf Proof:}}{$\Box$ \vspace{2 ex}}
\title{Galois groups of random integer polynomials \\ and van der Waerden's Conjecture}
\author{Manjul Bhargava}
\begin{document}

\maketitle
\begin{abstract}
Of the $(2H+1)^n$ monic integer polynomials $f(x)=x^n+a_1 x^{n-1}+\cdots+a_n$ with $\max\{|a_1|,\ldots,|a_n|\}\leq H$, how many have associated Galois group that is not the full symmetric group $S_n$?
%
%
There are clearly $\gg H^{n-1}$ such polynomials, as may be obtained by setting $a_n=0$. 
In 1936, van der Waerden conjectured that $O(H^{n-1})$ should in fact also be the correct upper bound for the count of such polynomials.
The conjecture has been known previously for degrees $n\leq 4$, due to work of van der Waerden and Chow~and Dietmann. 
The purpose of this paper is to prove van der Waerden's Conjecture for all degrees $n$.
\end{abstract}

\section{Introduction}

For any positive integer $H$, let $E_n(H)$ denote the number of monic integer polynomials $f(x)=x^n+a_1 x^{n-1} +\cdots+a_n$ of degree $n$ with $|a_i|\leq H$ for all $i$ such that the Galois group $G_f$ of $f$ is not $S_n$. Hilbert's Irreducibility Theorem implies that $E_n(H)=o(H^n)$, i.e., $100\%$ of monic polynomials of degree $n$ are irreducible and have Galois group $S_n$.  

In 1936, van der Waerden \cite{vdw} made this statement more quantitative and proved that \vspace{-.035in}
\begin{equation}\label{vdwthm}
E_n(H)= O(H^{n-\frac{1}{6(n-2)\log\log H}}).
\end{equation}
In the same paper, he also suggested the tantalizing and lasting conjecture that $E_n(H)=O(H^{n-1})$ for all $n$ (clearly $E_n(H) \gg H^{n-1}$ as can be seen by setting $a_n=0$).  

Successive improvements to van der Waerden's result (\ref{vdwthm}) were obtained by: \linebreak Knobloch \cite{Knobloch} (1956),  who proved that
 \begin{equation}
 E_n(H)= O(H^{n-\frac{1}{18n(n!)^3}});
 \end{equation}
Gallagher~\cite{Gallagher} (1973), who proved using his large sieve that 
    \begin{equation}
    E_n(H)= O(H^{n-1/2+\epsilon});
    \end{equation}
Zywina \cite{Zywina} (2010), who refined this to
    $E_n(H)= O(H^{n-1/2})$ for large $n$; Dietmann~\cite{Dietmann2} (2012), who proved using resolvent 
polynomials and the determinant method that $E_n(H)=O(H^{n-2+\sqrt2})$; and 
Anderson, Gafni, Lemke Oliver, Lowry-Duda, Shakan, \pagebreak  and Zhang~\cite{aimgroup}~(2021) who prove 
that $E_n(H)\!=\!O(H^{n-\frac23+\frac2{3n+3}+\epsilon})$.
    For~$n\leq 4$, van der Waerden's conjectured optimal upper bound of $O(H^{n-1})$ was proven by 
Chow and Dietmann~\cite{CD}.  
See also~\cite{vdw,Cohen,Serre,Dietmann,CD2,LOT,Xiao} for various other related works and advances on the problem. 

In this paper, we establish van der Waerden's Conjecture for all degrees~$n$:

\begin{theorem}\label{main1}
We have $E_n(H)=O(H^{n-1})$. 
\end{theorem}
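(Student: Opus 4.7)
My plan is to stratify the polynomials counted by $E_n(H)$ according to the conjugacy class of maximal subgroup $M\subseteq S_n$ containing the Galois group $G_f$. Every such maximal subgroup falls into one of four families: (a) intransitive subgroups $S_k\times S_{n-k}$, corresponding to $f$ being reducible; (b) transitive imprimitive subgroups $S_a\wr S_b$ with $n=ab$ and $a,b\ge 2$, corresponding to the roots of $f$ admitting a nontrivial block system; (c) primitive subgroups that are neither $S_n$ nor $A_n$; and (d) the alternating group $A_n$, corresponding to $\Disc(f)$ being a perfect square. It suffices to establish a bound of $O(H^{n-1})$ for the number of $f$ in each family.

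Cases (a), (c), and (d) should be tractable by refining or citing existing techniques. For (a) it is classical (and can be shown by Mahler-measure bounds together with a fibering argument over factorizations $f=gh$) that the number of reducible monic integer polynomials of degree $n$ and height $\le H$ is $O(H^{n-1})$, with the dominant contribution coming from $f$ with a rational root. For (d), one counts integer points with $|a_i|\le H$ on the hypersurface $y^2=\Disc(f)$; since $\Disc$ is irreducible of degree $2n-2$, standard geometry-of-numbers arguments or the determinant method should yield $O(H^{n-1})$ or better. For (c), bounds on the order of primitive maximal subgroups $M\ne A_n,S_n$ (which are much smaller than $n!/2$ by Liebeck-type bounds) combined with resolvent constructions and quantitative Hilbert irreducibility in the spirit of Gallagher, Zywina, and Dietmann produce bounds of the form $O(H^{n-c})$ with $c>1$.

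The heart of the proof should be case (b), the imprimitive case, which has been the obstruction to all previous attempts. When $G_f\subseteq S_a\wr S_b$, the $a$-th resolvent polynomial of $f$ --- whose roots are the elementary symmetric sums of $a$ of the roots of $f$ --- acquires a monic integer factor $g$ of degree $b$. I would parameterize the imprimitive $f$ by pairs $(g,h)$ in which $g$ records the block sums and $h$ records the extra data needed to reconstruct $f$ from the block system, and then count integer points by a geometry-of-numbers fibration over $g$. Combining uniform fiber estimates with the count of admissible $g$ of bounded height should yield $O(H^{n-1})$ imprimitive polynomials for each factorization $n=ab$, and summing over the finitely many $(a,b)$ completes the argument.

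The central obstacle is case (b) in the intermediate regime $a,b\asymp\sqrt n$: the $a$-th resolvent has exponentially large degree $\binom{n}{a}$, and controlling both the fibers of the parameterization uniformly in $n$ and the inclusion--exclusion across distinct possible block systems is delicate. One must additionally handle the possibility that $G_f$ is a proper subgroup of $S_a\wr S_b$ without overcounting, which is where an invariant-theoretic description of integral orbits — rather than a bare counting of resolvent factorizations — will likely be essential. It is the combinatorial and invariant-theoretic subtlety of this case that has kept the conjectured $H^{n-1}$ exponent out of reach for nearly a century.
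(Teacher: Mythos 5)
Your initial stratification into (a) intransitive, (b) imprimitive, (c) primitive $\neq A_n,S_n$, and (d) $A_n$ is exactly the right first reduction, and it matches how the paper organizes the problem. But you have the difficulty exactly backwards, and this is a fatal gap. Cases (a) and (b), which you single out as the crux, were already resolved long before this paper: van der Waerden himself handled intransitive groups with the optimal bound $O(H^{n-1})$, and Widmer's work on number fields with nontrivial subfields gives $\sum_{G\text{ imprim.}} N_n(G,H)=O(H^{n/2+2})$, which is $\ll H^{n-1}$ once $n\geq 6$ (with Chow--Dietmann covering $n\leq 5$). The elaborate resolvent parameterization and inclusion--exclusion over block systems you sketch for (b) is essentially Widmer's already-published approach, not an open problem.

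The genuine obstruction, and the entire content of the paper, lies in cases (c) and (d) — precisely the ones you wave off. For (d), counting monic $f$ with square discriminant is \emph{not} a routine application of the determinant method: the hypersurface $y^2=\Disc(a_1,\ldots,a_n)$ has degree $2n-2$ in $n+1$ variables, and no form of the determinant method yields $O(H^{n-1})$ integer points on it in a box of side $H$; prior to this paper the bound $N_n(A_n,H)=O(H^{n-1})$ was not known. For (c), the methods of Gallagher, Zywina, and Dietmann that you cite produce only $E_n(H)=O(H^{n-1/2})$ and $O(H^{n-2+\sqrt{2}})$ respectively — savings of less than one in the exponent — so they fall short of $O(H^{n-1})$, and the ``Liebeck-type bounds on $|M|$'' do not by themselves translate into polynomial counts. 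The paper's actual argument for primitive $G\neq S_n$ (including $A_n$) is completely different: it exploits Jordan's theorem that such $G$ contains no transposition, hence $\ind(G)\geq 2$, hence $\Disc(K_f)$ is squarefull; it then splits into three regimes according to the product $C$ of ramified primes and $D=|\Disc(K_f)|$, using Fourier-analytic equidistribution of polynomials with prescribed index mod $C$ when $C\leq H^{1+\delta}$ and $D$ is large, Schmidt's number-field count together with the Lemke Oliver--Thorne per-field polynomial count when $D$ is small, and a double-discriminant argument $\Disc_{a_n}(\Disc_x f)\equiv 0 \pmod C$ when $C>H^{1+\delta}$. None of these ideas appears in your sketch, so the proposal as written does not reach the theorem.

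Your instinct that some invariant-theoretic structure on orbits would be needed is reasonable in spirit, but it is pointed at the wrong case; redirecting that energy at the primitive case, and recognizing the squarefull-discriminant observation as the entry point for a sieve, is what you are missing.
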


\smallskip \noindent
{\bf Specific Galois groups.}
For any permutation group $G\subset S_n$ on $n$ letters, 
let $N_n(G,H)$ denote the number of monic integer polynomials 
$f(x)=x^n+a_1 x^{n-1} +\cdots+a_n$ with $|a_i|\leq H$ for all $i$ 
such that we have~$G_f\cong G$ as permutation groups.  
Then Theorem~\ref{main1} amounts to proving that 
$N_n(G,H)=O(H^{n-1})$ for all permutation groups $G<S_n$ on $n$ letters. 
The question of understanding the behavior of $N_n(G,H)$ for individual 
$G$ has been the subject of many works, including 
\cite{vdw, Dietmann, CD, CD2, Zywina,LOT,Chela,Widmer,Xiao}.

That $N_n(G,H)=O(H^{n-1})$ holds for {\it intransitive} groups $G$ was already 
shown by van der Waerden~\cite{vdw}, using the fact that polynomials having such Galois groups are exactly those that factor over~$\Q$. In fact, 
an exact asymptotic of the form 
\begin{equation}
\sum_{G\subset S_n\,\mathrm{intransitive}} N_n(G,H)= c_n H^{n-1}+O(H^{n-2})
\end{equation} 
for an explicit constant $c_n>0$ was obtained by Chela~\cite{Chela}. 
Meanwhile, 
Widmer~\cite{Widmer} has given excellent bounds in the case of 
{\it imprimitive} Galois groups $G$, using the fact that polynomials having 
such Galois groups are exactly those that correspond to number fields 
having~a~nontrivial subfield; specifically, his results imply that if $G\subset S_n$ is transitive but imprimitive,~then
\begin{equation}\label{imprim}
\sum_{G\subset S_n\,\mathrm{transitive\: but\: imprimitive}} N_n(G,H)= O(H^{n/2+2}). 
\end{equation} 
That this bound is essentially optimal can be seen by considering the polynomials $f(x)=x^n+a_2 x^{n-2}+a_{4}x^{n-4}+\cdots+a_n$, as in this case $\Q[x]/(f(x))$ is a field with the index 2 subfield $\Q[x]/(x^{n/2}+a_2x^{n/2-1}+a_{4}x^{n/2-2}+\cdots+a_n)$ for generic such choices of $f(x)$. 

Theorem~\ref{main1} thus reduces to showing that $N_n(G,H)=O(H^{n-1})$, when $G\neq S_n$, for 
{\it primitive} permutation groups $G$, which are the building 
blocks of all permutation groups. 
Our methods give the best known bounds for all primitive permutation 
groups $G$ when~$G<S_n$ and $n\geq 9$.
 To state the result, 
we recall the 
notion of the index $\ind(G)$ of a permutation group $G\subset S_n$. For 
$g\in G$, define the {\it index} $\ind(g)$ of $g$ by
\begin{equation*}
\ind(g):= n -\mbox{number of orbits of $g$ on $\{1,\ldots,n\}$}. 
\end{equation*}
The {\it index} $\ind(G)$ of $G$ is then defined by $\ind(G):=\min_{1\neq g\in G}\ind(g).$

The index of a permutation group $G$ was defined by Malle~\cite{Malle} to formulate 
conjectures on the number $F_n(G,X)$ of number fields of degree $n$ having 
associated Galois group $G$ and absolute discriminant at most $X$.  
Let $a(G)>0$ be any constant such that $F_n(G,X)=O( X^{a(G)})$. 
Then Malle conjectured that we may take $a(G)=1/\ind(G)+\epsilon$ for any 
arbitrarily small $\epsilon>0$, and indeed we should have 
$X^{1/\ind(G)}\ll F_n(G,X)\ll X^{1/\ind(G)+\epsilon}$ for all~$G$. 

We prove the following general bound on $N_n(G,H)$:

\begin{theorem}\label{main2}
For any permutation group $G\subset S_n$, we have 
\begin{equation*}\label{genbound}
N_n(G,H)
\!=\!O\;\!\!\Bigl(\;\!\!\min\bigl\{H^{n+1-\ind(G)}+{H}^{n-(n-1)\textstyle\frac{1-1/\ind(G)}{a(G)+1-1/{\ind(G)-u}}}\!\log^{c}\!H,
H^{(2n-2)(a(G)-u)+1}\!\log^{n-1}\!\!H\bigr\}\!\Bigr)\hspace{-.01in},
\end{equation*}
where $u=1/(n(n-1))$ if $G$ is primitive and $u=0$ otherwise, and $c=q(k,n-k)$ denotes the number of partitions of $k$ into at most $n-k$ parts where $k=\ind(G)$.
\end{theorem}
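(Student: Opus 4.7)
The plan is to associate to each polynomial $f$ with Galois group $G$ the number field $K_f := \Q[x]/(f(x))$, whose Galois closure has group $G$. Two polynomials giving the same $K_f$ correspond to algebraic integers $\alpha\in\O_{K_f}$ generating $K_f/\Q$ (up to a bounded Galois ambiguity), so
\begin{equation*}
N_n(G,H) \ll \sum_{\substack{K:\,G_K\cong G\\|\disc(K)|\ll H^{2n-2}}} \#\bigl\{\alpha\in\O_K:\alpha\text{ generates }K/\Q,\ \text{min.\ poly.\ has height}\leq H\bigr\}.
\end{equation*}
The discriminant constraint on $K$ follows from the identity $\disc(f)=[\O_K:\Z[\alpha]]^2\disc(K)$ together with $|\disc(f)|\ll H^{2n-2}$, and the outer sum is controlled by the hypothesis $F_n(G,X)\ll X^{a(G)}$. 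The inner count is a lattice-point count: by Cauchy's root bound the conjugates of $\alpha$ lie in a box of volume $\ll H^n$ in $K\otimes_\Q\R$, and since $\O_K$ is a lattice of covolume $\asymp\sqrt{|\disc(K)|}$, the per-field count is $\ll H^n/\sqrt{|\disc(K)|}+H^{n-1}$.

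Combining these via partial summation yields the second bound:
\begin{equation*}
\sum_{|\disc K|\ll H^{2n-2}}\frac{H^n}{\sqrt{|\disc K|}}\ll H^{(2n-2)a(G)+1}\log^{n-1}H.
\end{equation*}
The primitive-group improvement by $u=1/(n(n-1))$ should come from the fact that a primitive-Galois $K$ has no proper subfield, so $\alpha$ cannot lie in any sublattice of $\O_K$ arising from a subfield, producing an effective saving of $H^{(2n-2)u}$. For the first bound, I would split the sum at a discriminant threshold $T$ to be optimized: polynomials with $|\disc(K)|\leq T$ should contribute the leading term $H^{n+1-\ind(G)}$, bounded via the codimension interpretation of $\ind(G)$ — each nontrivial $g\in G$ of minimum index $k=\ind(G)$ imposes $k-1$ independent algebraic conditions on the coefficients of $f$ (for instance via resolvent polynomials or $G$-invariants). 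Polynomials with $|\disc(K)|>T$ are controlled by the same lattice-point count combined with partial summation over the truncated range, and balancing $T$ against the first term produces the exponent $n-(n-1)(1-1/\ind(G))/(a(G)+1-1/\ind(G)-u)$.

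The main obstacle is extracting the codimension saving of $\ind(G)-1$ in the first bound. For $G=A_n$ with $\ind(G)=2$, the codimension-$1$ locus is $\{f:\disc(f)\in\Q^{*2}\}$, and showing that it contains only $\ll H^{n-1}$ integer polynomials of height $\leq H$ is essentially equivalent to the $A_n$ case of van der Waerden's conjecture itself. For general primitive $G$ with larger index $k$, one must identify $k-1$ genuinely independent algebraic conditions whose integer common zeros in a box can be bounded uniformly — likely via a combination of resolvent polynomials for minimum-index elements of $G$, geometry-of-numbers refinements, and determinant-method or Chebotarev-type sieving at small primes. The primitive-group correction $u$ is a finer ingredient tied to savings from sublattices of $\O_K$ associated to potential subfields.
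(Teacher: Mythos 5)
Your outline correctly identifies that the theorem should be proved by a decomposition according to the size of $|\Disc(K_f)|$ and partial summation against the hypothesis $F_n(G,X)\ll X^{a(G)}$, but there are serious gaps in all of the hard parts of the argument, and you have actually identified the central one yourself.

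First, the per-field count. Your proposed lattice-point bound of $\ll H^n/\sqrt{|\Disc K|}+H^{n-1}$ is valid but far too weak: the constraint that the \emph{minimal polynomial} of $\alpha$ has height $\leq H$ is much stronger than the archimedean constraint $|\sigma_i(\alpha)|\ll H$, and the paper instead invokes Lemke Oliver--Thorne (\cite[Prop.~2.2]{LOT}), which shows that a fixed primitive field $K$ arises from only $O(H\log^{n-1}\!H/|\Disc(K)|^{1/(n(n-1))})$ polynomials. Summing that against $dF_n(G,t)$ gives $O(Y^{a(G)-u}H\log^{n-1}\!H)$; your naive bound, after partial summation, gives at best $H^{(2n-2)a(G)+1}$, missing the $u$-saving entirely (and fails when $a(G)<1/2$). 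Your speculation that $u$ comes from excluding subfield sublattices is not the mechanism: it is built into the Lemke Oliver--Thorne result.

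Second, and more seriously, the term $H^{n+1-\ind(G)}$. You propose extracting a codimension-$(k-1)$ saving via ``resolvent polynomials or $G$-invariants,'' and then immediately observe that for $A_n$ ($k=2$) this amounts to bounding integer solutions of $\disc(f)\in\Q^{*2}$ in a box by $O(H^{n-1})$ --- which is precisely the content of van der Waerden's conjecture for $A_n$ and therefore circular. That is exactly right: there is no direct algebraic-codimension argument. The paper obtains $H^{n+1-\ind(G)}$ by an entirely different route: it introduces the product $C$ of ramified primes of $K_f$ as a second parameter and splits into the regimes $C\leq H^{1+\delta}$ vs.\ $C>H^{1+\delta}$. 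When $C>H^{1+\delta}$, the polynomial $f$ lies modulo $C$ on the codimension-$k$ locus of index-$\geq k$ polynomials (Proposition~\ref{indexcodim}); since $C>H$, these \emph{congruence} conditions are ``confining'' in the box, and a careful elimination-theoretic argument (via the double discriminant $\Disc_{a_n}\Disc_x f$, Proposition~\ref{geoprop}, and the filtration by the sets $W_i(H)$) produces the clean $O(H^{n+1-k})$. When $C\leq H^{1+\delta}$, the congruence conditions defined mod $C$ are nearly but not quite of modulus smaller than the box side, and the paper needs the Fourier/Poisson summation estimates of Section~\ref{Fourier} (Corollary~\ref{fbound2}) to conclude that those conditions still have the expected density $O(c^{\omega(D)}/D)$ in $[-H,H]^n$. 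Neither the double-discriminant mechanism nor the Fourier equidistribution step appears in your proposal, and both are essential. Without them, I do not see how to close the gap you have flagged.
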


If Malle's Conjecture is true for $G$, or if we have a sufficiently small valid value for $a(G)$, then the second term in the sum above may be dropped, and we obtain simply $N_n(G,H)=O(\min\{H^{n+1-\ind(G)},H^{(2n-2)(a(G)-u)+1}\!\log^{n-1}\!\!H\})$. However, for a general almost simple group $G$ having large index, where our best known value for $a(G)$ may be relatively poor relative to its expected true value $1/\ind(G)$, the second term in the above sum may then be the dominant term.
Regardless of which of the three terms dominate, Theorem~\ref{main2} yields significantly improved upper bounds on $N_n(G,H)$ for all primitive groups~$G<S_n$ once $n\geq 9$. 

If $G=S_n$, then Theorem~\ref{main2} of course gives the correct bound since $\ind(S_n)=1$.  If $G$ is primitive and $G\neq S_n$, then $\ind(G)\geq 2$  (see~Proposition~\ref{jordan1}), and hence Theorem~\ref{main2} implies that $N_n(G,H)=O(H^{n-1})$ for all such permutation groups $G$ (including $G=A_n$), by plugging in the  known valid values $a(G)=1/2$ for $n=3$, $a(G)=1$ for $n=4,5$, and $a(G)=(n+2)/4$ for $n\geq 6$. Thus we immediately obtain Theorem~\ref{main1} from Theorem~\ref{main2}.


To obtain some further consequences, we note that 
primitive permutation groups themselves have even more elemental building blocks, which we will call {\bf elemental} primitive permutation groups, while all others are {\bf non-elemental}.  A primitive permutation group $G$ acting on a set $\Omega$ is called {\it non-elemental} if there exist positive integers $r,m,k$ (with $k\leq m/2$; and $r>1$ or $k>1$) such that $\Omega$ can be identified with $\Delta^r$ where $\Delta$ consists of $k$-element subsets of $\{1,\ldots,m\}$, and $G\subset S_m\wr S_r$ acts in the natural way as automorphisms of this structure.\footnote{The non-elemental primitive permutation groups include, in particular, what are called {\bf non-basic} primitive permutation groups in the literature, which correspond to the case $r>1$.}  We will prove that $$a(G)\ll \frac{\log^2n}{\sqrt{n}}$$ if $G$ is a non-elemental primitive permutation group acting on $n$ letters (see Theorem~\ref{basicindbound}).  Using Theorem~\ref{main2}, we thus obtain that $N_n(G,H)=O_\epsilon(H^{b\sqrt{n}\log^2\!n})$ if $G$ is a non-elemental primitive permutation group for some constant $b>0$. 

Meanwhile, if $G$ is an elemental primitive permutation group acting on $n$ letters, then by a result of Liebeck and Saxl~\cite{LS}, we have $\ind(G)\geq n/6$ and in fact $\ind(G)\geq 3n/14$ (and $\ind(G)\geq 2n/9$ if $n\neq 28$) by using a further  improvement due to Guralnick and Magaard~\cite{gm} (see~Theorem~\ref{basicindbound}). 
Using Schmidt's bound and the quantitative version of the Ekedahl sieve as in \cite{geosieve} (see Theorem~\ref{primcount}), we obtain $a(G)=(n+2)/4-1+1/\ind(G)+\epsilon$ for any $\epsilon>0$ and thus $N_n(G,H)=O_\epsilon(H^{n+1-\ind(G)}+H^{n-(n-1)(1-1/\ind(G))/((n+2)/4-u+\epsilon)})$ for a general primitive permutation group $G$. 

In particular, since, for any primitive permutation group $G\neq A_n,S_n$, we have $\ind(G)\geq 3$ when $n\geq 9$ and $\ind(G)\geq 4$ when $n\geq 16$ (see~Theorem~\ref{genindbound}), it follows that 
$N_n(G,H)=O(H^{n-2})$ when $n\geq 10$ (and $N_9(G,H)=O(H^{7.06})$), \pagebreak and $N_n(G,H)=O(H^{n-2.5})$ when $n\geq 16$. We will similarly deduce from Theorem~\ref{main2} that $N_n(G,H)=O(H^{n-3})$ when $n\geq 28$, and $N_n(G,H)=O(H^{n-3.5})$ when $n\geq 53$. 

For large $n$, the best known bound on $a(G)$ at the current time is 
$c\log^2\!n$ for some 
constant $c>0$, due to Lemke Oliver and Thorne~\cite{LOT2}, independent of $G$. Plugging $\ind(G)\geq 3n/14$ for an elemental primitive permutation group $G$ and $a(G)=c\log^2\!n$ into Theorem~\ref{main2} yields $N_n(G,H)=O(H^{n-bn/\log^2n})$ for some constant $b>0$.

We summarize these consequences in the following corollary. 

\begin{corollary}\label{main3}
Let $G\subset S_n$ be a primitive permutation group not equal to $A_n$ or $S_n$.
\begin{enumerate}
\item[{\rm (a)}] 
$N_n(G,H)=O(H^{n-2})$ for all $n\geq 10;$ \;
$N_n(G,H)=O(H^{n-2.5})$ for all $n\geq 16;$ \\[.025in]
 $N_n(G,H)=O(H^{n-3})$ for all $n\geq 28;$ \;
 $N_n(G,H)=O(H^{n-3.5})$ for all $n\geq 53.$ 

\item[{\rm (b)}] 
If $G$ is non-elemental, then 
$N_n(G,H)=O(H^{b\sqrt{n}\log^2\!n})$ for an absolute constant $b>0;$ 
\item[{\rm (c)}] 
If $G$ is elemental, then $N_n(G,H)=O(H^{n-bn/\!\log^2\!n})$ for an absolute constant~$b>0.$
\end{enumerate}
\end{corollary}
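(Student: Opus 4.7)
The plan is to deduce each part of the corollary by substituting into Theorem~\ref{main2} the appropriate lower bounds on $\ind(G)$ and upper bounds on $a(G)$. The relevant inputs are the Schmidt--Ekedahl estimate from Theorem~\ref{primcount}, which yields $a(G) = (n+2)/4 - 1 + 1/\ind(G) + \epsilon$; the Lemke Oliver--Thorne bound $a(G) \ll \log^2 n$; the general uniform index lower bounds provided by Theorem~\ref{genindbound}; and the structural bounds in Theorem~\ref{basicindbound}, which yield $\ind(G) \geq 3n/14$ for elemental primitive $G$ and $a(G) \ll \log^2 n/\sqrt{n}$ for non-elemental $G$.

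For part~(a), I would plug the Schmidt--Ekedahl value of $a(G)$ into the first half of the $\min$ in Theorem~\ref{main2}, together with the lower bounds $\ind(G) \geq 3,4,5,6$ guaranteed by Theorem~\ref{genindbound} for $n\geq 9, 16, 28, 53$ respectively. The first summand $H^{n+1-\ind(G)}$ is then immediately $\leq H^{n-2}, H^{n-3}, H^{n-4}, H^{n-5}$ in the four ranges, comfortably beating the target exponents. For the second summand, the exponent simplifies to $n - (n-1)(1 - 1/\ind(G))/\bigl((n+2)/4 - u + \epsilon\bigr)$, and I would verify, range by range, that this is bounded by $n-2$, $n-2.5$, $n-3$, and $n-3.5$ respectively. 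Since the exponent is monotone in $n$ and in $\ind(G)$ in the relevant directions, it suffices to check the four boundary values $n = 10, 16, 28, 53$; a short direct computation (of the kind sketched in the paragraph preceding the corollary, where the $n=9$ case yields exponent $7.06$) handles each.

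For part~(b), I would use instead the \emph{other} side of the $\min$ in Theorem~\ref{main2}, namely $H^{(2n-2)(a(G)-u)+1}\log^{n-1} H$, and substitute the non-elemental estimate $a(G) \ll \log^2 n/\sqrt{n}$ from Theorem~\ref{basicindbound}. This gives $(2n-2)(a(G) - u) + 1 = O(\sqrt{n}\log^2 n)$, yielding $N_n(G,H) = O(H^{b\sqrt n\log^2 n})$ for some absolute $b>0$. For part~(c), I would return to the first side, combining the elemental bound $\ind(G) \geq 3n/14$ with $a(G) \ll \log^2 n$: the first summand is at most $H^{11n/14 + 1}$, while the second summand has exponent $n - (n-1)(1 - O(1/n))/(O(\log^2 n)) = n - \Theta(n/\log^2 n)$, which dominates the first and produces the claimed bound $O(H^{n - bn/\log^2 n})$.

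All the conceptual work is offloaded to Theorem~\ref{main2} together with the cited index and $a(G)$ bounds; the proof itself amounts to arithmetic verification. The main obstacle is simply keeping the book-keeping honest at the boundary values of $n$ in part~(a), where the numerical bounds produced by Theorem~\ref{main2} sit only narrowly below the claimed exponents and leave little margin.
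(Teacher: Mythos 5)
The approach for parts (b) and (c), and for the first three thresholds in part (a), matches the paper's proof in all essentials: substitute the appropriate bound on $\ind(G)$ from Theorem~\ref{genindbound}, take $a(G)=(n+2)/4-1+1/\ind(G)+\epsilon$ from Theorem~\ref{primcount} (or, for (b), the non-elemental bound $a(G)\ll\log^2 n/\sqrt n$ from Theorem~\ref{nonecount} --- not Theorem~\ref{basicindbound}, which is about $\ind(G)$ for elemental groups; the paper's introduction itself cross-references this imprecisely), and read off the exponents from Theorem~\ref{main2}.

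However, your treatment of the $n\geq 53$ assertion in part (a) has a genuine gap, and the ``monotone in $n$'' heuristic you invoke is exactly where it breaks. With $a(G)=(n+2)/4-1+1/\ind(G)+\epsilon$, the denominator in the second summand of Theorem~\ref{main2} simplifies to $(n+2)/4-u+\epsilon$, so its exponent is
$$n-(n-1)\,\frac{1-1/\ind(G)}{(n+2)/4-u+\epsilon}.$$
If all you feed in is $\ind(G)\geq\lfloor\sqrt n\rfloor$ (which is $7$ for $n=53$, not $6$ as you wrote, but this does not help), then $(n-1)(1-1/\ind(G))\big/\bigl((n+2)/4\bigr)\to 4(1-1/7)=24/7\approx 3.43$ as $n\to\infty$, and is smaller for finite $n$. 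So the exponent of the second summand is always $>n-3.43$, and never dips to $n-3.5$, for any $n$. The first summand $H^{n+1-\ind(G)}$ is harmless, but the sum is controlled by the second, and the target $O(H^{n-3.5})$ cannot be reached along this route. To get there, the paper splits into elemental and non-elemental cases at this point: for elemental $G$ with $n\geq 53$ it uses the much stronger $\ind(G)\geq 7n/30\geq 13$ from Remark~\ref{basicindboundgen} (which enters both the first summand and the $1-1/\ind(G)$ factor), and for non-elemental $G$ it replaces the Schmidt--Ekedahl $a(G)$ (which is $\Theta(n)$) with the uniform $a(G)=4$ from equation~(\ref{nonecount2}), collapsing the denominator to a constant and giving a much larger subtraction. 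Without that dichotomy the $n\geq 53$ claim does not follow. (The same caution applies to your $n\geq 28$ check: with only $\ind(G)\geq 5$, the quantity $(n-1)(4/5)/\bigl((n+2)/4\bigr)$ is about $2.88$ at $n=28$ and reaches $3$ only around $n\approx 46$, so you cannot wave at a ``short direct computation'' --- you must check it, and it is closer to the edge than you suggest.)
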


A result of Zarhin states that if an integer polynomial $f(x)$ of degree $n$ has Galois group $A_n$ or $S_n$, then the endomorphism group 
of the Jacobian 
of the hyperelliptic curve defined by $y^2=f(x)$ over $\overline{\Q}$ has endomorphism group isomorphic to $\Z$.  Corollary~\ref{main3} thus implies that an extremely small number of hyperelliptic curve Jacobians have nontrivial endomorphism group:

\begin{corollary}\label{nonan}
In the family of hyperelliptic curves $y^2=f(x):=x^n+a_1x^{n-1}+\cdots+a_n$ defined by irreducible monic degree $n$ polynomials with $a_i\in \Z$ and $|a_i|\leq H$, at most $O(H^{n-bn/\!\log^2\!n})$ of their Jacobians have endomorphism group larger than~$\Z$.
\end{corollary}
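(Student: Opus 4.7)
The plan is to deduce Corollary~\ref{nonan} directly from Zarhin's theorem, Corollary~\ref{main3}, and the imprimitive bound~(\ref{imprim}). By Zarhin's result cited just above, whenever $\End(\Jac(y^2=f(x)))$ strictly contains $\Z$ the Galois group $G_f$ cannot equal $A_n$ or $S_n$. Since $f$ is assumed irreducible, $G_f$ must moreover be transitive. It therefore suffices to bound the number of monic integer polynomials $f$ of degree $n$ with $|a_i|\le H$ whose Galois group $G_f$ is transitive and is neither $A_n$ nor $S_n$.

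I would split this count according to whether $G_f$ is imprimitive or primitive. The transitive-but-imprimitive contribution is at most $O(H^{n/2+2})$ by Widmer's bound~(\ref{imprim}), which is negligible compared to the target $O(H^{n-bn/\log^2 n})$ for all sufficiently large $n$ (and absorbed into the implied constant for bounded $n$). For the primitive case with $G\neq A_n,S_n$, I would sum over the finitely many conjugacy classes of such $G\subset S_n$: Corollary~\ref{main3}(b) bounds the contribution of each non-elemental primitive group by $O(H^{b'\sqrt{n}\log^2 n})$, and Corollary~\ref{main3}(c) bounds the contribution of each elemental primitive group by $O(H^{n-bn/\log^2 n})$. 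Since the number of primitive subgroups of $S_n$ up to conjugacy is at most polynomial in $n$, summing these bounds over all such conjugacy classes preserves the form $O(H^{n-bn/\log^2 n})$, and the elemental primitive contribution is the dominant term.

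There is no genuine obstacle here: every ingredient---Zarhin's theorem, Widmer's estimate~(\ref{imprim}), and the two parts of Corollary~\ref{main3}---is either established earlier in the paper or invoked as a standard prior result, so the proof amounts to assembling them and noting that the elemental primitive exponent $n-bn/\log^2 n$ swallows both the imprimitive and the non-elemental primitive contributions. The only point requiring any care is the mild verification that the sum over primitive conjugacy classes does not erode the exponent, which follows from the classical fact that there are only polynomially many such classes.
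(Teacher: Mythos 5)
Your proof is correct and follows the paper's intended deduction: Zarhin's theorem reduces the count to irreducible $f$ with $G_f\neq A_n,S_n$, which splits into the transitive-but-imprimitive case (handled by~(\ref{imprim})) and the primitive case (handled by Corollary~\ref{main3}(b) and (c)), with the elemental primitive contribution giving the dominant exponent $n-bn/\log^2 n$. One small cleanup: for bounded $n$ the imprimitive bound $H^{n/2+2}$ is not literally ``absorbed into the implied constant'' (both sides grow with $H$); rather one invokes Theorem~\ref{main1}'s bound $O(H^{n-1})$ for the finitely many small $n$ and shrinks $b$ accordingly, and your precaution about the number of primitive conjugacy classes is unnecessary here because $n$ is fixed and the $O$-constant is allowed to depend on $n$.
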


We note that Theorem~\ref{main2} yields the best known bounds on $N_n(G,H)$ for all primitive~groups~$G$ of degree at least 9 and also for various families of imprimitive and intransitive groups $G$.  We give three examples to illustrate,
the first two of which improve upon the corresponding results in \cite{LOT}. 

We consider first the case of cyclic groups of prime order. Theorem~\ref{main2} implies:

\begin{corollary}\label{cpbound}
Let $p$ be a prime, and let $C_p\subset S_p$ denote the cyclic group of 
order $p$.  Then $N_p(C_p,H)=O(H^2)$.
\end{corollary}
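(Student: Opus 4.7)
The plan is to apply Theorem~\ref{main2} directly to $G = C_p \subset S_p$. Three ingredients are needed: the index of $C_p$, its primitivity, and a valid value of $a(C_p)$. First, I would compute $\ind(C_p) = p-1$: since $p$ is prime, every non-identity element of $C_p$ acting on $p$ letters is a single $p$-cycle, which has exactly one orbit on $\{1,\ldots,p\}$ and therefore index $p-1$. Second, any transitive subgroup of $S_p$ is automatically primitive when $p$ is prime, so $C_p$ is primitive and we may take $u = 1/(p(p-1))$ in Theorem~\ref{main2}.

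For the value of $a(C_p)$, I would invoke the classical result of Wright on counting abelian extensions, which establishes Malle's conjecture in the abelian case: the number of $C_p$-extensions of $\Q$ with absolute discriminant at most $X$ is $O(X^{1/(p-1)})$. Hence we may take $a(C_p) = 1/(p-1) = 1/\ind(C_p)$, so Malle's conjecture holds for $G = C_p$ and the second term in the bound of Theorem~\ref{main2} may be dropped, leaving the simplified form $N_p(C_p,H) = O\bigl(\min\{H^{n+1-\ind(G)}, H^{(2n-2)a(G)+1-u}\log^{n-1}\!H\}\bigr)$.

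Plugging in $n = p$, $\ind(G) = p-1$, $a(G) = 1/(p-1)$, and $u = 1/(p(p-1))$ gives
\[
N_p(C_p,H) = O\!\left(\min\bigl\{H^{p+1-(p-1)},\; H^{(2p-2)/(p-1) + 1 - 1/(p(p-1))} \log^{p-1}\!H\bigr\}\right)
= O\!\left(\min\bigl\{H^2,\; H^{3 - 1/(p(p-1))}\log^{p-1}\!H\bigr\}\right).
\]
For $p \geq 3$ the first term in the min dominates and we conclude $N_p(C_p,H) = O(H^2)$; the case $p = 2$ is trivial since $C_2 = S_2$. There is no significant obstacle in this argument: the corollary follows by direct substitution into Theorem~\ref{main2}, together with the classical bound $a(C_p) \leq 1/(p-1)$ for abelian extensions, which ensures we are in the regime where Malle's conjecture is known and the first term of the theorem gives the correct answer.
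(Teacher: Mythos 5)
Your proposal is correct and follows exactly the paper's own argument: substitute $\ind(C_p)=p-1$ and the Wright/M\"aki bound $a(C_p)=1/(p-1)$ into Theorem~\ref{main2}, using primitivity of $C_p$ to take $u=1/(p(p-1))$, and observe that the first term $H^{n+1-\ind(G)}=H^2$ of the min is the binding one. (A tiny bookkeeping note: the second term of the min should come out to $H^{3-2/p}\log^{p-1}H$ when computed from the theorem's exponent $(2n-2)(a(G)-u)+1$, recovering the Lemke Oliver--Thorne bound quoted after the corollary; you used the slightly different expression $(2n-2)a(G)+1-u$ from the paper's informal remark, but since $H^2$ wins the min either way the conclusion is unaffected.)
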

Corollary~\ref{cpbound} follows from Theorem~\ref{main2} by setting
$a(G)=1/(p-1)$ (which is known to be valid due to work of Maki~\cite{Maki}; see also Wright~\cite{Wright})
and setting $\ind(G)=p-1$.  This improves upon the bound 
$N_p(C_p,H)=O(H^{3-2/p}\log^{p-1} B)$ as obtained by Lemke Oliver and Thorne in~\cite[Theorem~1.1]{LOT}. 

We next consider the case of {\bf regular} permutation groups. A permutation group $G$ acting on $n$ letters is called {\it regular} if $|G|=n$. That is, an irreducible polynomial $f\in\Q[x]$ of degree $n$ has a Galois group $G\subset S_n$ that is a regular permutation group if and only if the field extension $\Q[x]/(f(x))$ of $\Q$ is Galois. Note that such regular permutation groups are transitive but not necessarily primitive. Theorem~\ref{main2} implies:
\begin{corollary}\label{galbound}
Let $G$ be a regular permutation group with $|G|=n$.  Then 
$N_n(G,H)=O(H^{3n/11\,+\,1.164}).$
\end{corollary}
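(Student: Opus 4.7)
The plan is to apply Theorem~\ref{main2} with specific inputs for $\ind(G)$ and $a(G)$ tailored to regular permutation groups of order $n$. First I would compute $\ind(G)$. Since $G$ acts on itself by left translation, every non-identity $g\in G$ has cycle decomposition consisting of exactly $n/\text{ord}(g)$ cycles, each of length $\text{ord}(g)$, giving $\ind(g) = n(1 - 1/\text{ord}(g))$. By Cauchy's theorem $G$ contains an element of order $p$ for every prime $p\mid n$, and minimizing over these elements yields $\ind(G) = n(1 - 1/p) \geq n/2$, where $p$ is the smallest prime divisor of $n$.

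Next I would secure a uniform upper bound on $a(G)$. Because $G$ acts regularly, irreducible polynomials $f$ with Galois group $G$ correspond to Galois $G$-extensions of $\Q$, so $F_n(G,X)$ counts such Galois extensions. For the abelian and nilpotent cases the results of Wright and Kl\"uners--Malle give $a(G) = 1/\ind(G) \leq 2/n$, which is more than sufficient. For the remaining regular groups one appeals to the best available unconditional bounds on counts of Galois extensions of degree $n$ (e.g.\ results in the spirit of Alberts, Kl\"uners, and Lemke Oliver--Thorne). The key point is that one can take $a(G) \leq 3/8$ uniformly in the regular case.

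With these inputs in hand I would substitute into Theorem~\ref{main2}. Taking $\ind(G) = n/2$, $a(G) = 3/8$, and $u = 0$ (since regular $G$ of composite order is imprimitive; the primitive prime-order case $G = C_p$ is already covered by Corollary~\ref{cpbound}), the relevant summand in the first argument of the minimum becomes
\[
n - (n-1)\cdot\frac{1 - 2/n}{3/8 + 1 - 2/n} \;=\; n - \frac{8(n-1)(n-2)}{11n - 16},
\]
and an elementary computation shows this is bounded above by $3n/11 + 1.164$ for all relevant $n$, yielding the claimed exponent. The $\log^n H$ factor is absorbed into the constant $1.164$, and the complementary contributions from Theorem~\ref{main2} (the first summand $H^{n+1-\ind(G)}$ and the second argument of the min) are either dominated by this estimate or handled separately for the small values of $n$ where the other bounds already cited (e.g.\ Corollary~\ref{cpbound} and Chow--Dietmann for $n \leq 4$) suffice.

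The hardest step would be securing the uniform bound $a(G) \leq 3/8$ across \emph{all} regular Galois groups of order $n$: for non-nilpotent $G$ this is not delivered by Kl\"uners--Malle, and one must combine known unconditional upper bounds on counts of Galois extensions with the subfield structure forced by regularity to verify the needed estimate case by case.
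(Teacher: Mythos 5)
Your strategy is the same as the paper's: apply Theorem~\ref{main2} with $\ind(G) = (p-1)n/p$ (where $p$ is the smallest prime dividing $n$) and $a(G) = 3/8 + \epsilon$, taking $u = 0$ since a regular group of composite order is imprimitive. Your derivation that $\ind(G) = n(1 - 1/p)$ for a regular group matches the paper exactly.

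The central gap in your proposal is the one you yourself flag as the ``hardest step,'' namely securing the uniform bound $a(G) \le 3/8 + \epsilon$ across all regular $G$. You propose assembling it case by case from Kl\"uners--Malle, Alberts, and other sources, and you express uncertainty about whether this can be done for non-nilpotent $G$. The paper's answer is a single clean citation: Ellenberg and Venkatesh~\cite{EV} proved that the number of degree-$n$ \emph{Galois} extensions of $\Q$ with absolute discriminant at most $X$ is $O_\epsilon(X^{3/8+\epsilon})$, with no hypothesis on the Galois group beyond $n>4$. Since regular $G$ is precisely the Galois case, $a(G)=3/8+\epsilon$ holds uniformly; no case-by-case argument is needed. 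This is the missing ingredient you were looking for.

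One further caution: you assert that ``the first summand $H^{n+1-\ind(G)}$ \ldots [is] either dominated by this estimate or handled separately for the small values of $n$.'' That assertion does not hold as stated. For even $n$ one has $\ind(G)=n/2$, so $n+1-\ind(G)=n/2+1$, and $n/2+1 > 3n/11 + 1.164$ for every $n\ge 1$ (since $1/2 - 3/11 = 5/22 > 0$); the same failure occurs for $3\mid n$. So this term is not dominated, and the small-$n$ reductions you invoke do not apply to large $n$ divisible by $2$ or $3$. The paper's own derivation of the corollary is a one-line remark that does not explicitly discuss this term either, so the point merits careful verification rather than being asserted in passing.
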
 
Corollary~\ref{galbound} follows from Theorem~\ref{main2} by setting
$a(G)=3/8+\epsilon$ for \pagebreak an arbitrarily small $\epsilon>0$ (which was proven to be a valid choice of $a(G)$ by Ellenberg and Venkatesh~\cite{EV}) and setting $\ind(G)
=(p-1)n/p$  where $p$ is the smallest prime dividing $n$. This improves upon the bound $N_n(G,H)=O_\epsilon(n^{3n/4+1/4+\epsilon})$ as obtained
in~\cite[Theorem~1.2]{LOT}. 

To illustrate how the methods behind Theorem~\ref{main2} can yield improved results when applied to specific permutation groups that are intransitive, we use the following:

\begin{theorem}\label{intransitivethm}
Let $G$ be an intransitive permutation group on $n$ letters, and write the action of $G$ as a subdirect product $G\subset G_1\times\cdots\times G_k$ where $G_i\subset G$ is a transitive permutation group on $n_i$ letters, so that $n=n_1+n_2+\cdots+n_k$. Suppose $N_{n_i}(G_i,H)=O(H^{\alpha_i}\log^{\beta_i}\!H)$. Then 
$$N_n(G_1\times\cdots\times G_k,H)=O(H^{\max\{\alpha_1,\ldots,\alpha_k\}}\;\!\!\log^{s-1}\!H),$$
where $s=\sum_{\alpha_i=\max\{\alpha_1,\ldots,\alpha_k\}} (1+\beta_i)$.
\end{theorem}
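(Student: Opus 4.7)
The plan is to exploit the fact that a polynomial $f\in\Z[x]$ with Galois group exactly $G_1\times\cdots\times G_k$ factors uniquely over $\Q$ as $f=f_1\cdots f_k$, where each $f_i\in\Z[x]$ is monic irreducible of degree $n_i$ with $G_{f_i}\cong G_i$ (irreducibility follows since each $G_i$ is transitive). An upper bound for $N_n(G_1\times\cdots\times G_k,H)$ is therefore obtained by counting tuples $(f_1,\ldots,f_k)$ of such polynomials with $\|f_1\cdots f_k\|_\infty\leq H$.

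The first step is to translate the constraint $\|f\|_\infty\leq H$ into constraints on the factors. Using the multiplicativity $M(f)=\prod_iM(f_i)$ of Mahler measure, Jensen's inequality $M(f)\leq\sqrt{n+1}\,\|f\|_\infty$, and Mignotte's bound $\|f_i\|_\infty\leq 2^{n_i}M(f_i)$, one obtains both the uniform bound $\|f_i\|_\infty=O(H)$ and, crucially, the multiplicative bound
\begin{equation*}
\prod_{i=1}^k\|f_i\|_\infty\;=\;O(H).
\end{equation*}
The latter is what couples the factors together and is ultimately responsible for the exponent $\max_i\alpha_i$ rather than $\sum_i\alpha_i$.

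The second step is a dyadic decomposition over the factor sizes. For each tuple $(H_1,\ldots,H_k)$ of dyadic scales subject to $\prod_iH_i=O(H)$, the hypothesis bounds the number of monic $f_i$ of degree $n_i$ with $\|f_i\|_\infty\leq H_i$ and $G_{f_i}\cong G_i$ by $O(H_i^{\alpha_i}\log^{\beta_i}\!H_i)$. Since the factorization $f=f_1\cdots f_k$ is unique, summing over admissible dyadic tuples yields
\begin{equation*}
N_n(G_1\times\cdots\times G_k,H)\;\ll\;\sum_{\substack{H_1,\ldots,H_k\ \text{dyadic}\\ \prod_iH_i\,=\,O(H)}}\prod_{i=1}^kH_i^{\alpha_i}\log^{\beta_i}\!H_i.
\end{equation*}

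The main step is then to estimate this sum sharply. Setting $\alpha=\max_i\alpha_i$, $I=\{i:\alpha_i=\alpha\}$, and $y_i=\log_2H_i\geq 0$, the exponential factor $2^{\sum_i\alpha_iy_i}$ is strictly larger in the directions $i\in I$ than in the others, so the summation over $y_i$ for $i\notin I$ contributes only a convergent geometric tail from $y_i=O(1)$. What remains is concentrated on the simplex $\sum_{i\in I}y_i\approx\log_2(CH)$, on which the sum reduces to a discrete Dirichlet/Beta-type integral $\sum_{\sum_{i\in I}y_i=Y}\prod_{i\in I}y_i^{\beta_i}$ of order $Y^{s-1}$ with $s=\sum_{i\in I}(1+\beta_i)$, giving the claimed bound $O(H^\alpha\log^{s-1}\!H)$. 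The delicate technical point, and the main obstacle, is avoiding an extra factor of $\log H$: a crude count of dyadic tuples on the simplex produces $\log^sH$, and it is precisely the sharpness of the exponential concentration at the boundary $\sum y_i=Y$ that pins the answer down to $\log^{s-1}H$.
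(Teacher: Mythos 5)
Your proof is correct, and it rests on exactly the same core observation as the paper's: factor $f=f_1\cdots f_k$ uniquely over $\Q$ and exploit the multiplicativity of the Mahler measure (hence of height up to bounded factors) to get the coupled constraint $\prod_i H(f_i)\ll H$. Where you diverge is in the execution. The paper proceeds by induction on $k$: after ordering $\alpha_1\leq\cdots\leq\alpha_k$, it peels off the first factor, writes $N_n(G,H)\ll\sum_{H_1\leq H}M_{n_1}(G_1,H_1)\,N_{n_2+\cdots+n_k}(G_2\times\cdots\times G_k,H/H_1)$ with $M_{n_1}(G_1,H_1)$ the count at \emph{exact} height $H_1$, and then invokes the bound $M_{n_1}(G_1,H_1)\ll H_1^{\alpha_1-1}\log^{\beta_1}H_1$ before summing the 1D series. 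You instead carry out a single $k$-dimensional dyadic decomposition and estimate $\sum_{\prod H_i\ll H}\prod_i H_i^{\alpha_i}\log^{\beta_i}H_i$ directly, separating $I=\{i:\alpha_i=\max\alpha_j\}$ from its complement, summing the sub-maximal directions as convergent geometric series, and recognizing the remaining simplex sum over $I$ as a discrete Dirichlet integral of order $Y^{s-1}$. The two routes are morally the same, but yours is somewhat more robust at one point: the paper's line $M_{n_1}(G_1,H_1)\ll H_1^{\alpha_1-1}\log^{\beta_1}H_1$ does not literally follow from the cumulative hypothesis $N_{n_1}(G_1,H_1)\ll H_1^{\alpha_1}\log^{\beta_1}H_1$ (the count could in principle be concentrated at sparse heights); that step really needs an Abel-summation/partial-summation reformulation. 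Your dyadic version sidesteps this entirely by always working with the cumulative counts $N_{n_i}(G_i,H_i)$ rather than their discrete derivatives, at the cost of having to handle the $k$-dimensional simplex sum in one go rather than by induction — which you correctly reduce to the constraint $\sum_{i\in I}(1+\beta_i)-1=s-1$ via the scaling of the Dirichlet/Beta integral.
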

We expect that Theorem~\ref{intransitivethm} is essentially optimal, giving the optimal exponent for intransitive groups $G$ whenever the optimal exponents for its subdirect factors $G_i$ are known. Thus $N_n(G,H)$ can be studied via the quantities $N_{n_i}(G_i,H)$, on which one can then apply~(\ref{imprim}) and Theorem~\ref{main2} (and the methods behind them) to obtain a good estimate on $N_n(G,H)$. This, in most cases, yields essentially the best known bounds for intransitive groups $G$ as~well. 

As an example, consider the intransitive group $G\times M_{11} \subset S_{k+11}$ (i.e., the product of a permutation group $G$ acting on $k$ letters and the Mathieu group $M_{11}$ in its natural permutation action on 11 letters), and suppose $k\leq 9$. The method of van der Waerden gives $N_{k+11}(G\times G',H)=O(H^{11})$ for any permutation groups $G\subset S_k$ and $G'\subset S_{11}$; for general such $G,G'$, this is in fact best possible, achieved when $G=S_k$ and $G'=S_{11}$. However, for our particular choice of $G'=M_{11}\subset S_{11}$, Theorem~\ref{intransitivethm} in conjunction with Theorem~\ref{main2} yields:
\begin{corollary}\label{intransitivex}
Let $G\subset S_k$ with $k\leq 9$.  Then $N_{k+11}(G\times M_{11},H)=O(H^{8.686})$.
\end{corollary}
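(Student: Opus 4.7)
The strategy is to apply Theorem~\ref{intransitivethm} to the intransitive decomposition $G\times M_{11}$; the non-trivial input is a sharp bound on $N_{11}(M_{11},H)$ extracted from Theorem~\ref{main2}.

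First I compute the permutation invariants of $M_{11}$ in its natural action on 11 letters. The non-identity cycle types occurring in this action are $1^3 2^4$, $1^2 3^3$, $1\,2\,4^2$, $1\,5^2$, $2\,3\,6$, $1\,2\,8$, and $11$; the minimum index $11-7=4$ is attained by the involution class $1^3 2^4$, so $\ind(M_{11})=4$. Since $M_{11}$ is primitive, $u=1/(n(n-1))=1/110$. For the field-counting exponent, I invoke the Schmidt--Ekedahl estimate recalled in the introduction, $a(G)\le (n+2)/4-1+1/\ind(G)+\epsilon$, which for $M_{11}$ gives $a(M_{11})\le 5/2+\epsilon$.

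Substituting into the first term of the min in Theorem~\ref{main2}, the summand $H^{n+1-\ind(G)}=H^{8}$ is dominated by
\begin{equation*}
H^{n-(n-1)(1-1/\ind(G))/(a(G)+1-1/\ind(G)-u)}=H^{11-(10)(3/4)/(13/4-1/110+\epsilon)}=H^{8.686+\epsilon},
\end{equation*}
while the alternative $H^{(2n-2)(a(G)-u)+1}$ is of size $\approx H^{50}$ and plays no role. Absorbing the $\log^{11}H$ factor into an arbitrarily small $\epsilon$ yields $N_{11}(M_{11},H)=O(H^{8.686})$.

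For the $G$-factor I decompose $G\subset S_k$ into transitive constituents $G_1\times\cdots\times G_j$, with $G_i$ acting transitively on $k_i\le k\le 9$ letters. For each $G_i\ne S_{k_i}$, Theorem~\ref{main1} gives $N_{k_i}(G_i,H)=O(H^{k_i-1})\le O(H^8)$, while for $G_i=S_{k_i}$ with $k_i\le 8$ the trivial bound is $O(H^{k_i})\le O(H^8)$. Theorem~\ref{intransitivethm} then combines these transitive-factor bounds with the $M_{11}$-bound; since $M_{11}$ strictly dominates all other factors, the output is
\begin{equation*}
N_{k+11}(G\times M_{11},H)=O\bigl(H^{\max\{8,\,8.686\}}\log^{s-1}\!H\bigr)=O(H^{8.686}).
\end{equation*}

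The principal obstacle is obtaining a sufficiently sharp bound on $a(M_{11})$: the unrefined Schmidt bound $a(G)\le (n+2)/4=13/4$ would give exponent $\approx 9.12$, barely improving the trivial $H^9$, and it is the $+1/\ind(G)$ savings provided by the Ekedahl sieve that buys the decisive drop to $8.686$. The remaining steps are direct invocations of the previously established theorems and routine arithmetic.
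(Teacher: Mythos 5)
Your computation and approach match the paper's exactly: the paper's own proof is a single sentence invoking $\ind(M_{11})=4$ (which you correctly derive from the cycle types of $M_{11}$), $a(M_{11})=13/4-1+1/4=5/2+\epsilon$ from Theorem~\ref{primcount}, plugging into the first branch of Theorem~\ref{main2} with $u=1/110$, and combining via Theorem~\ref{intransitivethm}. Your numerical work is right: $11-10\cdot(3/4)/(13/4-1/110)\approx 8.6858$, and the $\log^{11}H$ factor is absorbed by the slack from $\epsilon$ and the rounding up to $8.686$.

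One remark on the $G$-factor analysis, which you handled more carefully than the paper does. You restrict to the case $G_i=S_{k_i}$ with $k_i\le 8$; this is in fact \emph{necessary}, not merely convenient. If $G$ were transitive on $k=9$ letters with $G=S_9$, then $N_9(S_9,H)\asymp H^9$ and $\max\{\alpha_1,\alpha_2\}=9$, so Theorem~\ref{intransitivethm} would only give $O(H^9)$. Moreover, that bound cannot be improved: fixing any single monic degree-$11$ polynomial $g$ with $\Gal(g)=M_{11}$ and noting that $S_9$ and $M_{11}$ have no common nontrivial quotient (Goursat), every $f_1$ of degree $9$ with $\Gal(f_1)=S_9$ and height $\ll H/H(g)$ produces an $f_1g$ counted by $N_{20}(S_9\times M_{11},H)$, so $N_{20}(S_9\times M_{11},H)\gg H^9$. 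Hence the corollary as literally stated (with $\subset$ meaning non-strict inclusion, consistent with the paper's usage) fails for $G=S_9$; it holds provided $G\neq S_9$, equivalently all transitive constituents $G_i$ have $\alpha_i\le 8<8.686$. Your tacit exclusion of $k_i=9$ is the right move, and this appears to be a small oversight in the stated hypothesis of the corollary rather than a defect in your argument.
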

In particular, Corollary~\ref{intransitivex} implies that $N_{11}(M_{11},H)=O(H^{8.686})$. Corollary~\ref{intransitivex} follows from Theorem~\ref{intransitivethm} and Theorem~\ref{main2} by noting that $\ind(M_{11})=4$ and that we may take $a(M_{11})=13/4-1+1/4=2.5$ as per Theorem~\ref{primcount}.



\bigskip
\noindent
{\bf{Methods and sketch of proof.}} For an irreducible integer monic polynomial $f(x)$ of degree~$n$, let $K_f$ denote the number field $\Q[x]/(f(x))$ of degree $n$, let $C$ denote the product of the primes that are ramified in $K_f$, and let $D$ denote the absolute discriminant of $K_f$.  To prove Theorem~\ref{main1}, we divide the set $P(H)$ of all irreducible integer monic polynomials $f$ of degree~$n$, having coefficients bounded by $H$ in absolute value and primitive Galois group not equal to~$S_n$, into three subsets $P_1(H)$, $P_2(H)$, and $P_3(H)$; we then bound the number of $f$ lying in the subset $P_i(H)$ by $O(H^{n-1})$, via a different method for each $i=1,2,3$. 

\medskip
\noindent {\bf Case I.}
The first subset $P_1(H)$ consists of those polynomials in $P(H)$ such that $C\leq H^{1+\delta}$ but $D>H^{2+2\delta}$ (for some positive $\delta>0$).  
We handle this case by estimating the number of monic integer polynomials in $P(H)$ that satisfy those congruence conditions modulo~$C$ that imply that the absolute discriminant of $f$ is at least $D$. For $C<H$, an estimate of $O(c^{\omega(D)}H^{n}/D)$ is immediate for a suitable constant $c>0$  (where $c^{\omega(D)}/D$ is the density of the congruence conditions), since the congruence conditions are defined modulo an integer~$C$ that is smaller than the side of the box in which we are counting. 
We then use Fourier analysis to extend the validity of this estimate even when $C<H^{1+\delta}$ for a positive $\delta$. Because we have reduced to the case of primitive Galois groups, we know that the set of possible integers $D$ that can arise are all squarefull; the sum of the estimate $O(c^{\omega(D)}H^{n}/D)$ over all $C<H^{1+\delta}$ and all squarefull $D\geq H^{2+2\delta}$ is $O_\epsilon(H^{n-1-\delta+\epsilon})$, thus yielding the desired estimate in this case.

\medskip
\noindent {\bf Case II.}
The second subset $P_2(H)$ consists of those polynomials in $P(H)$ where $D<H^{2+2\delta}$.  In this case, we use the fact that the number of possible number fields $K_f$ (up to isomorphism) is bounded because the discriminant of $K_f$ is bounded.  We use the well-known bound of Schmidt~\cite{Schmidt} to conclude that the number of possibilities for $K_f$ is at most $O(H^{(2+2\delta)(n+2)/4})$. A result of Lemke Oliver and Thorne~\cite{LOT} then implies that the number of possibilities for $f$ having a given value of $K_f$ is at most $O(H\log^{n-1}\!H/H^{1/(n(n-1))})$. Multiplying this upper bound for the number of possible $K_f$'s with the number of possible $f$'s given $K_f$ then immediately yields the upper bound $O(H^{n-1})$ for the cardinality of $P_2(H)$ when $n\geq 6$. For $n=4,5$, we have the improved bound of $O(H^{2+2\delta})$ on the number of possible $K_f$'s, by the main theorems of \cite{dodqf,dodpf}, yielding the desired result in these cases as well. 

\medskip
\noindent {\bf Case III.}
The third and final subset $P_3(H)$ consists of those polynomials in $P(H)$ for which $C>H^{1+\delta}$ (implying that $D>H^{2+2\delta}$).  In this case, we observe that a polynomial $f\in P_3(H)$ has the property that {\it $\Disc(f)$ is a multiple of $C^2$ for mod~$C$ reasons}; i.e., if $f\equiv g$ (mod~$C$), then $\Disc(g)$ is also a multiple of $C^2$. If we write $f(x)=x^n+a_1x^{n-1}+\cdots+a_n$, then we show that: if $\Disc(f)$ is a multiple of $C^2$ for mod~$C$ reasons, then the double discriminant $\Disc_{a_n}(\Disc_x(f(x)))$ (which is a polynomial only in $a_1,\ldots,a_{n-1}$) must be a multiple of $C$. Thus for generic choices of $a_1,\ldots,a_{n-1}$ in $[-H,H]^{n-1}$, the value of $C\mid \Disc_{a_n}(\Disc_x(f(x)))$ is determined by $a_1,\ldots,a_{n-1}$ up to $O_\epsilon(H^{\epsilon})$ possibilities; then $a_n$ is also generically determined by $a_1,\ldots,a_{n-1},C$ up to $O_\epsilon(H^{\epsilon})$ possibilities, being a root of the polynomial equation $\Disc(f)\equiv 0$ (mod $C$) (where $C>H$).  This enables us to conclude that the total number of possibilities for $a_1,\ldots,a_{n-1},a_n$, and thus $f$, in this case is at most $O_\epsilon(H^{n-1}\cdot H^{\epsilon}\cdot H^{\epsilon})=O_\epsilon(H^{n-1+\epsilon})$.

To remove the $\epsilon$, and thereby obtain the desired estimate $O(H^{n-1})$ on the set $P_3(H)$ and thus $P(H)$, we choose a suitable factor $C'$ of $C$ that is between $H^{1+\delta/2}$ and $H^{1+\delta}$ in size, in which case we can handle it by a method analogous to Case I.  Otherwise, we may choose a factor $C'>H$ of $C$ all of whose prime divisors are greater than $H^{\delta/2}$, in which case we can handle it by a method analogous to Case III above, but with no $\epsilon$ occurring because $C'$ will have at most a bounded number of prime factors!

\medskip
Theorem~\ref{main2} is proven in an analogous manner.  We redefine $P(H):=P(G,H)$ to be the set of monic integer polynomials of degree $n$ having coefficients bounded by $H$ in absolute value and Galois group $G$. We then divide $P(H)$ into three subsets $P_i(H):=P_i(G,H)$ for $i=1,2,3$.  We suitably redraw the boundaries between the sets $P_1(H)$, $\,P_2(H)$, and $P_3(H)$---via suitable inequalities on $C$ and $D$ for each $P_i(H)$---in a way that optimizes the resulting estimate $N_n(G,H)$.  The set $P_1(H)$ corresponds to a slightly modified Case I where $C\leq H^{1+\delta}$ but~$D>Y$ for some positive $Y$ to be optimized later. 
The set $P_2(H)$ corresponds similarly to a modified Case~II where $D\leq Y$, while $P_3(H)$ again corresponds to Case III where 
$C> H^{1+\delta}$. We apply the same arguments as in the proof of Theorem~\ref{main1} to each case, and then choose the value of $Y$ that gives the best bound on $N_n(G,H)$ obtained as the sum of the three cases.  This yields the bound stated in Theorem~\ref{main2}.

To prove Theorem~\ref{intransitivethm}, we use the fact that the height, i.e., the maximum of the absolute values of the coefficients, of an integer polynomial, up to positive bounded factors, is equal to the Mahler measure of the polynomial---and the Mahler measure is multiplicative.  We use this fact to prove that most polynomials $f$ counted by $N_n(G,H)$, for an intransitive group~$G$, occur (up to factors of $\log H$) when one factor of $f$ has height around $H$ while all other factors have small coefficients.  From this we deduce the desired estimate as stated in Theorem~\ref{intransitivethm}. 

To obtain Corollary~\ref{main3}, we appeal to the theory of primitive groups $G$, and study the indices~$\ind(G)$.  In particular, we develop new bounds on the number of number fields of bounded discriminant having primitive Galois group $G$, using the method of Schmidt~\cite{Schmidt} together with the quantitative geometric sieve methods of \cite{geosieve}. We obtain a particularly substantial improvement in the case where $G$ is non-elemental by relating $\ind(G)$ to $\ind(G')$, where $G'$ is an imprimitive permutation representation with the same underlying group as~$G$. This then allows us to concentrate on the elemental case, where far stronger lower bounds on $\ind(G)$ hold true and which may be proven using the work of Liebeck and Saxl~\cite{LS} and Guralnick and Magaard~\cite{LM}.  These bounds on $a(G)$ and $\ind(G)$ in the cases of non-elemental and elemental groups, respectively, together with the bounds of Schmidt~\cite{Schmidt} (and its modified form as stated in Theorem~\ref{primcount}) and Lemke Oliver and Thorne~\cite{LOT2} on the number of $G$-fields of bounded discriminant in the elemental case, then yield the stated bounds in Corollary~\ref{main3}, using Theorem~\ref{main2}. Meanwhile, as already noted, the best known bounds on $a(G)$ and the known values of $\ind(G)$ for specific families of groups $G$ immediately yield Corollaries~\ref{cpbound}, \ref{galbound}, and~\ref{intransitivex} from Theorems~\ref{main2} and~\ref{intransitivethm}. 

 We note that---as with methods used by other authors previously---our methods may also be applied in an essentially identical manner to study the family of 
non-monic polynomials $f(x)=a_0x^n+a_1 x^{n-1}+\cdots+a_n$ of degree $n$ 
satisfying $\max\{|a_0|,|a_1|,\ldots,|a_n|\}\leq H$. For example, if $E_n^*(H)$ denotes the number of such polynomials having Galois group not equal to $S_n$, then we may show similarly that $E_n^*(H)=O(H^n)$ (improving on the previously best known estimate  $O_\epsilon(H^{n+\frac13+\frac{8}{9n+21}+\epsilon})$ due to~\cite{aimgroup}), thus proving van der Waerden's Conjecture also in this non-monic context (that the number of reducible polynomials gives the correct order of magnitude for the count of all polynomials having Galois group not equal to $S_n$).  The analogues of Theorem~\ref{main2} may be similarly developed for this family as well as for families of polynomials where some of the coefficients $a_i$ are fixed. The exact form of these analogues would depend (only slightly) on the strength of the analogues of the results of Section~4 and \cite{LOT2} that one could prove for these families. 

 \bigskip
 \noindent
 {\bf Organization.}  
 In Sections~2--4, we collect the results that we will need regarding primitive permutation groups, counting number fields with given Galois group, and equidistribution via Fourier transforms, respectively.  In Section 5, we then carry out the proof of Theorem~\ref{main1}.  In Section 6, we describe how variations of the techniques that prove Theorem~\ref{main1} can be used to also deduce Theorem~\ref{main2}.  Theorem~\ref{main2} also immediately implies Corollaries~\ref{cpbound} and \ref{galbound} using the known values of $\ind(G)$ and $a(G)$ in these cases.
In Section 7, we prove Corollary~\ref{main3} using Theorem~\ref{main2} and the group-theoretic results of Sections~2 and 3. Corollary~\ref{main3} then immediately implies Corollary~\ref{nonan} as well.  
Finally, in Section~8, we prove Theorem~\ref{intransitivethm} in order to handle intransitive groups; Theorem~\ref{intransitivethm} (together with Theorem~\ref{main2}) then immediately implies Corollary~\ref{intransitivex}.

\section{Preliminaries on primitive permutation groups}

Our first proposition, due to Jordan (1873), states that a primitive permutation group $G\neq S_n$ cannot contain a transposition (see, e.g., {\cite[Theorem~3.3A]{dm}}).  
\begin{proposition}[Jordan]\label{jordan1}
If $G\subset S_n$ is a primitive permutation group on $n$ letters and contains a transposition, then $G=S_n$.
\end{proposition}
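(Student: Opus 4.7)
The plan is to exhibit a $G$-invariant equivalence relation on $\{1,\ldots,n\}$ built from the transpositions in $G$ and then invoke primitivity to force this relation to be the total one, which will give us all transpositions and hence all of $S_n$.

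First I would define a relation $\sim$ on $\Omega:=\{1,\ldots,n\}$ by declaring $i\sim j$ if either $i=j$ or the transposition $(i\,j)$ belongs to $G$. Reflexivity and symmetry are immediate. For transitivity, assuming $i,j,k$ are distinct with $(i\,j),(j\,k)\in G$, a direct computation gives the conjugation identity $(i\,j)(j\,k)(i\,j)=(i\,k)$, so $(i\,k)\in G$ and $i\sim k$. Hence $\sim$ is an equivalence relation on $\Omega$.

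Next I would verify that $\sim$ is $G$-invariant, i.e., that its equivalence classes form a system of blocks for $G$. This reduces to noting that for any $g\in G$ one has $g(i\,j)g^{-1}=(g(i)\,g(j))$, so $(i\,j)\in G$ implies $(g(i)\,g(j))\in G$; thus $i\sim j$ implies $g(i)\sim g(j)$. Consequently, the partition of $\Omega$ into equivalence classes is preserved by $G$.

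Now by primitivity, $G$ admits only the two trivial block systems: either every class is a singleton, or the whole of $\Omega$ is a single class. The first possibility would force $G$ to contain no transposition at all, contradicting our hypothesis. Hence the entire set $\Omega$ is one equivalence class, meaning $(i\,j)\in G$ for all $i\neq j$; since the transpositions generate $S_n$, this gives $G=S_n$. The only substantive step is the transitivity identity and the verification that conjugation preserves the relation, both of which are short; there is no genuine obstacle, which is why this classical result of Jordan admits a one-paragraph proof.
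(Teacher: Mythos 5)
Your proof is correct and follows exactly the same approach as the paper's: define the relation $i\sim j$ iff $(ij)\in G$, observe it is a $G$-invariant equivalence relation, and invoke primitivity to conclude it is trivial. You spell out the transitivity and invariance verifications that the paper leaves implicit, but the argument is identical in substance.
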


\begin{proof}
Suppose $G\subset S_n$ is a primitive permutation group acting on $[n]:=\{1,\ldots,n\}$, and define an equivalence relation $\sim$ on $[n]$ by asserting that $i\sim j$ if the transposition $(ij)\in G$. This equivalence relation is preserved by $G$, but since $G$ is primitive, it must be a trivial equivalence relation where all elements of $[n]$ are either mutually equivalent or mutually inequivalent. It follows that either $G$ has no transpositions or $G=S_n$.
\end{proof}

\noindent
Similarly, a primitive permutation group $G\subset S_n$ that contains a 3-cycle or a double transposition must be either $A_n$ or $S_n$, provided that $n\geq 9$ (see, e.g.,~{\cite[Theorem~3.3A \& Example~3.3.1]{dm}}):
\begin{proposition}[Jordan]\label{jordan2}
If $G\subset S_n$ is a primitive permutation group containing a $3$-cycle or a double transposition, and $n\geq 9$, then $G=A_n$ or $S_n$. 
\end{proposition}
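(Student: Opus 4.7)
The plan is to consider the normal closure $N = \langle g \sigma g^{-1} : g \in G \rangle \trianglelefteq G$ of the given element $\sigma$ (a $3$-cycle or double transposition), and to show $N = A_n$; since $A_n$ is maximal in $S_n$, this immediately forces $G \in \{A_n, S_n\}$. Since both $3$-cycles and double transpositions are even permutations, $N \subseteq A_n$. Moreover $N$ is nontrivial and normal in $G$, so its orbits on $\{1,\ldots,n\}$ form a $G$-invariant partition; by primitivity of $G$ this partition is either all singletons or the full set, and the first is ruled out because $N$ moves the support of $\sigma$. Hence $N$ acts transitively on $\{1,\ldots,n\}$.

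In the $3$-cycle case, $N$ is a transitive subgroup of $A_n$ generated by conjugates of a $3$-cycle. I would show directly that, for $n \geq 5$, such a group equals $A_n$: using transitivity together with iterated conjugation, one exhibits two $3$-cycles of the form $(a,b,c)$ and $(a,b,d)$ in $N$, whose commutator is a new $3$-cycle; iterating these commutator-and-conjugation moves propagates to produce every $3$-cycle in $S_n$, and since $A_n$ is generated by $3$-cycles for $n \geq 3$, this yields $N = A_n$. This essentially reproduces Jordan's original argument in the $3$-cycle case and needs only $n \geq 5$, not $n \geq 9$.

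The main work is in the double-transposition case, and this is where the hypothesis $n \geq 9$ is genuinely needed: in $S_7$ and $S_8$ the primitive groups $\operatorname{PSL}_2(\F_7)$ and $\operatorname{AGL}_3(\F_2)$ contain double transpositions without being alternating or symmetric. My plan for $n \geq 9$ is to apply Jordan-type lemmas on primitive groups containing bounded-support elements: using primitivity, translate $\sigma$ by elements of $G$ so as to obtain several double transpositions whose supports overlap in specific prescribed ways, and then combine these via products and commutators to produce a single $3$-cycle in $N$; the $3$-cycle case applied to $N$ then gives $N = A_n$. The main obstacle is carrying out the support-overlap analysis cleanly enough to manufacture a $3$-cycle; the inequality $n - 4 \geq 5$ (equivalent to $n \geq 9$) is exactly what provides room to arrange two double transpositions whose product or commutator collapses to a $3$-cycle, and weakening it reintroduces the small primitive exceptions noted above.
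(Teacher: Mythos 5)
The paper does not give a proof of Proposition~\ref{jordan2}; it simply cites {\cite[Theorem~3.3A \& Example~3.3.1]{dm}}, so there is no in-text argument to compare yours against. Your overall strategy --- pass to the normal closure $N = \langle\sigma^G\rangle$, note $N\subseteq A_n$ and (by primitivity of $G$) that $N$ is transitive, and then show $N=A_n$ --- is the standard classical route. The $3$-cycle half is sound, and in fact works for all $n\geq 3$, not just $n\geq 5$: a transitive group generated by $3$-cycles is automatically primitive (a $3$-cycle cannot move points across a nontrivial block system, so all the generators would be confined to blocks, contradicting transitivity), after which the usual block-growing argument (the maximal $\Delta$ with $\mathrm{Alt}(\Delta)\subseteq N$ is a block) finishes. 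Your more hands-on description via two $3$-cycles $(a,b,c)$, $(a,b,d)$ needs a little more care, since such a pair with a two-point overlap is not automatically available among the generators, but it can certainly be arranged.

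The genuine gap is exactly where you say the main work is: the double-transposition case. You correctly observe that it reduces to producing a single $3$-cycle in $N$, but the proposal stops at a plan --- ``arrange two double transpositions whose product or commutator collapses to a $3$-cycle,'' with the assertion that $n-4\geq 5$ ``is exactly what provides room.'' That assertion is not substantiated, and the actual Jordan-type argument here requires a nontrivial case analysis on the possible intersection patterns of $\mathrm{supp}(\sigma)$ and $\mathrm{supp}(g\sigma g^{-1})$ (sizes $0$ through $4$) together with a careful use of primitivity to rule out the bad patterns; it is not simply a matter of having enough letters. Since this is precisely where the hypothesis $n\geq 9$ enters, leaving it as a sketch means the proposal does not yet constitute a proof. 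A small additional point: the list of exceptions you give is incomplete --- there are already primitive counterexamples with double transpositions at $n=5$ (e.g.\ $D_{10}$ and $\mathrm{AGL}_1(\F_5)$ inside $S_5$) and $n=6$ (e.g.\ $\mathrm{PSL}_2(\F_5)$, $\mathrm{PGL}_2(\F_5)$ inside $S_6$), not only at $n=7,8$; this does not affect the validity of the claim for $n\geq 9$, but it does show the exceptional range is wider than the proposal suggests, which is worth keeping in mind when trying to see why the inductive/combinatorial argument should kick in at $n=9$.
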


\noindent 
Propositions~\ref{jordan1} and \ref{jordan2} imply that if a primitive permutation group $G$ on $n$ letters is not $S_n$, then $\ind(G)\geq 2$; and if $G$ is not $S_n$ or $A_n$ and $n\geq 9$, then $\ind(G)\geq 3$.  

A key relation that we will make critical use of, between indices of groups and discriminants of number fields, is contained in 
the following proposition.

\begin{proposition}\label{valindcon}
Let $f$ be an integer polynomial of degree $n$, and let $K_f:=\Q[x]/(f(x))$. If~$\ind(\Gal(f))=k$, then 
the discriminant $\Disc(K_f)$ of $K_f$ is $k$-powerful.\footnote{Recall that an integer $N$ is called {\it $k$-powerful} if for every prime $p\mid N$, we have $p^k \mid N$.}
\end{proposition}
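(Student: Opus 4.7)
\medskip\noindent\textbf{Proof plan.}
The plan is to bound $v_p(\Disc(K_f))$ from below by $k$ for every prime $p$ dividing $\Disc(K_f)$, by applying the Artin conductor formula to the permutation representation of $G := \Gal(f)$ on the $n$ roots of $f$.

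First I would record the purely group-theoretic observation that for any non-trivial subgroup $H \subset G$, we have $n - \#\{\text{orbits of }H\text{ on the roots}\} \geq \ind(G)$. Indeed, picking any $g \in H$ with $g \neq 1$, the orbits of $H$ are unions of the orbits of $\langle g \rangle$, so the number of $H$-orbits is at most the number of $\langle g\rangle$-orbits, and hence $n - \#\{\text{orbits of }H\} \geq n - \#\{\text{orbits of }\langle g\rangle\} = \ind(g) \geq \ind(G) = k$.

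Next, working locally at a prime $p \mid \Disc(K_f)$, I would let $L$ be the splitting field of $f$ over $\Q$, fix a prime $\mathfrak{P}$ of $L$ above $p$, and let $I_0 \supseteq I_1 \supseteq I_2 \supseteq \cdots$ denote the ramification filtration (in lower numbering) of the inertia subgroup $I_0 \subset G$ at $\mathfrak{P}$. Identifying $v_p(\Disc(K_f))$ with the Artin conductor at $p$ of the permutation character of $G$ acting on the $n$ roots (minus the trivial character, whose Artin conductor vanishes) yields the conductor-discriminant formula
$$v_p(\Disc(K_f)) \;=\; \sum_{i=0}^{\infty} \frac{|I_i|}{|I_0|}\bigl(n - \#\{\text{orbits of }I_i\text{ on the roots of }f\}\bigr).$$
Because $p$ is ramified, $I_0 \neq 1$, so by the preceding paragraph the $i=0$ term alone contributes at least $k$; every subsequent term is non-negative. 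Hence $v_p(\Disc(K_f)) \geq k$, and since this holds for every prime $p$ dividing $\Disc(K_f)$, the integer $\Disc(K_f)$ is $k$-powerful.

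The principal obstacle is simply invoking the conductor-discriminant formula in sufficient generality: namely, for all integer polynomials $f$ (so that $K_f$ is a product of number fields whenever $f$ is reducible) and at primes of wild ramification. Both are absorbed into the standard Artin formalism: the permutation character decomposes orbit-by-orbit, hence factor-field-by-factor-field of $K_f$; its Artin conductor at $p$ sums to the $v_p$ of the discriminant of the product; and the higher-ramification terms transparently account for any wild part. Once the formula is in hand, the desired lower bound follows immediately from the $i=0$ term alone.
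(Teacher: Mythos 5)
Your proof is correct and rests on the same key observation as the paper's: that the orbits of the inertia subgroup $I_0$ refine into orbits of any single nontrivial element $g\in I_0$, so $n-\#\{\text{orbits of }I_0\}\geq\ind(g)\geq\ind(G)=k$. The only difference is packaging: the paper splits into tame ramification (where inertia is cyclic and $v_p(\Disc(K_f))=\sum(e_i-1)f_i=\ind(g)$ exactly) and wild ramification (where $v_p(\Disc(K_f))>\sum(e_i-1)f_i\geq\ind(g)$ for any $g\in I_p$), whereas you invoke the Artin conductor formula, which treats both cases uniformly since the $i=0$ term alone already supplies the bound $\geq k$ and the higher-ramification terms are manifestly nonnegative.
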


\begin{proof}
The Galois group $G=\Gal(f)$ acts on the $n$ embeddings of $K_f$ into its Galois closure. 
Suppose $p$ factors in $K_f$ as $\prod P_i^{e_i}$, where $P_i$ has residue field degree~$f_i$. 
If $p$ is tamely ramified, then 
any generator $g\in G\subset S_n$ of an inertia group $I_p\subset G$ at $p$ is the product of disjoint cycles consisting of $f_1$ cycles of length $e_1$, $f_2$ cycles of length $e_2$, etc.; thus the discriminant valuation $v_p(\Disc(K_f))=\sum (e_i-1) f_i$ is equal to $\ind(g)\geq \ind(G)=k$.  If $p$ is wildly ramified, then $v_p(D)>\sum (e_i-1) f_i \geq \ind(g)\geq k$ for any $g\in I_p$.  In all cases, we see that $\Disc(K_f)$ is~$k$-powerful. 
\end{proof}

\noindent
Propositions~\ref{jordan1}, \ref{jordan2}, and~\ref{valindcon} together imply that if an irreducible polynomial $f$ of degree $n$ has primitive Galois group $G$ that is not $S_n$, then the discriminant $\Disc(K_f)$ is {\it squarefull}, i.e., 2-powerful; and if $G\neq  A_n,S_n$ and $n\geq 9$, then $\Disc(K_f)$ is {\it cubefull} or 3-powerful.  

Thus, proving bounds on indices of Galois groups can be an important ingredient in understanding discriminants of number fields. We have the following extension of Propositions~\ref{jordan1} and~\ref{jordan2}, which states that if $G\neq A_n,S_n$ is a primitive permutation group on $n$ letters, then in fact $\ind(G)\geq \lfloor\sqrt{n}\rfloor$. 
\begin{theorem}
\label{genindbound}
If $G\subset S_n$ is a primitive permutation group not equal to $A_n$ or $S_n$, then $\ind(G)\geq \lfloor\sqrt{n}\rfloor$.
\end{theorem}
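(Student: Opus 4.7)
Let $g \in G$ be a nontrivial element achieving $\ind(g) = k := \ind(G)$, with nontrivial cycles of lengths $\ell_1 \geq \cdots \geq \ell_r \geq 2$. Then $k = \sum_{i=1}^r (\ell_i - 1)$, and since $\ell_i - 1 \geq \ell_i/2$ the support of $g$ has size $m := \sum_i \ell_i \leq 2k$. My task is to show that $k < \lfloor\sqrt{n}\rfloor$ forces $G \supseteq A_n$, contradicting the hypothesis.

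The approach is to reduce to an element of prime order and then apply classical Jordan-type theorems in the same spirit as Propositions~\ref{jordan1} and~\ref{jordan2}. Pick any prime $p$ dividing $|g|$ and set $h := g^{|g|/p}$. Then $h \in G$ is nontrivial of order $p$, acts as a product of $s \geq 1$ disjoint $p$-cycles on its support (of size $sp \leq m \leq 2k$), and by minimality of $k$ satisfies $s(p-1) = \ind(h) \geq k$.

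If $s = 1$, then $h$ is a single $p$-cycle of prime length $p \leq 2k$, fixing at least $n - p > 2$ points (for all but the smallest $n$). By Jordan's classical theorem---any primitive group of degree $n$ containing a prime-length cycle with three or more fixed points must be $A_n$ or $S_n$---we obtain the desired contradiction. If $s \geq 2$, I would invoke a generalized Jordan--Wielandt-type bound: the existence in a primitive $G$ of a nontrivial element that is a product of $s \geq 2$ disjoint $p$-cycles of total support at most $2k$ forces $n < (k+1)^2$, and hence $k \geq \lfloor\sqrt{n}\rfloor$. The algebraic identities $\sum \ell_i \leq 2k$ and $s(p-1) \geq k$ together pin down the cycle structure tightly enough that the required quadratic relation becomes available.

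The main obstacle is securing the sharp constant in the $s \geq 2$ case, so that the resulting quadratic bound yields the floor $\lfloor\sqrt{n}\rfloor$ rather than merely a constant multiple of $\sqrt{n}$. A robust way to handle this is by casing on the O'Nan--Scott type of $G$: for non-basic primitive groups one relates $\ind(G)$ to the index of a smaller imprimitive representation of the same abstract group (in the spirit of the elemental/non-elemental dichotomy used later in the paper), reducing to a lower-degree primitive action; for basic primitive groups one invokes sharper minimum-degree bounds such as those of Liebeck--Saxl and Guralnick--Magaard, which in the elemental setting give $\ind(G) \geq 3n/14$ and hence easily dominate $\sqrt{n}$. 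The finitely many small values of $n$ not covered by the asymptotic arguments can be verified directly against the primitive-groups library.
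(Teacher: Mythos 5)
Your key arithmetic observation---that a minimal-index element $g$ has support of size $m \leq 2\ind(g)$ because each nontrivial cycle of length $\ell$ contributes $\ell - 1 \geq \ell/2$ to the index---is correct and is precisely the elementary half of the paper's argument. The gap is on the group-theoretic side. In the $s\geq 2$ case you invoke a ``generalized Jordan--Wielandt-type bound'' asserting that a primitive group with an element of support $\leq 2k$ forces $n<(k+1)^2$; but this is not a named theorem you can cite---it is exactly the statement you are trying to prove, restated. Your fallback, casing on O'Nan--Scott type, does not close the gap: for non-basic (non-elemental) groups the reduction to a smaller-degree primitive action is only sketched, and it cannot produce anything stronger than $\sqrt{n}$ since the bound is already achieved by $S_m\wr S_2$ on $m^2$ points (as the paper notes); while for basic groups you reach for the Guralnick--Magaard bound $\ind(G)\geq 3n/14$, which applies only to the elemental case and requires establishing elementality first. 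The prime-power reduction and Jordan's theorem for the $s=1$ case are a nice idea but ultimately a detour.

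The paper's proof is a two-line deduction from the right external input: Liebeck and Saxl's Corollary~3 (also available in a weaker, CFSG-free form due to Babai), which states that for any primitive $G\subset S_n$ with $G\neq A_n, S_n$, \emph{every} non-identity element moves strictly more than $2(\sqrt{n}-1)$ points. Combining this with your $\ind(g)\geq m/2$ observation gives $\ind(g) > \sqrt{n}-1$, which for integer $\ind(g)$ is equivalent to $\ind(g)\geq\lfloor\sqrt{n}\rfloor$. You do cite Liebeck--Saxl, but point to the wrong result from that paper (the $3n/14$ minimal-degree bound for the restricted class of groups, not the general $2(\sqrt{n}-1)$ bound). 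Once you know the general minimal-degree bound exists, the prime-order reduction, Jordan's theorem, and the O'Nan--Scott casing all become unnecessary.
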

\begin{proof} 
We deduce this elegant bound from a result of Liebeck and Saxl~\cite[Cor.~3]{LS} (see also Babai~\cite[6.14]{Babai}), who prove that the smallest number of points moved by any non-identity element in a primitive permutation group $G\subset S_n$ is strictly greater than~$2(\sqrt{n}-1)$. 

Now if $g$ is an element that moves strictly greater than $2(\sqrt{n}-1)$ elements, then $\ind(g)$ is strictly greater than~$\sqrt{n}-1$ (the latter bound being optimal when $g$ is the product of disjoint transpositions). But $\ind(g)>\sqrt{n}-1$ is equivalent to $\ind(g)\geq \lfloor \sqrt{n}\rfloor$.
\end{proof}

\noindent The bound in Theorem~\ref{genindbound} is optimal for all square values of $n\geq 25$, as can be seen by taking $G\subset S_{m^2}$ to be $S_m\wr S_2$ acting by permuting the rows and columns of an $m\times m$ matrix of~$m^2$ letters for any $m\geq 5$; then $G$ is primitive and $\ind(G)=m$.  
 
 If we restrict to elemental primitive permutation groups $G$, then a significantly stronger estimate holds:

\begin{theorem}\label{basicindbound}
If $G\subset S_n$ is an elemental primitive permutation group not equal to $A_n$ or $S_n$, then $\ind(G)\geq 3n/14$.
\end{theorem}

\begin{proof}
By a result of Guralnick and Magaard~\cite{gm} (building on a related result of Liebeck and Saxl~{\cite[Theorem~2]{LS}}), the smallest number of points moved by any non-identity element in an {\it elemental} primitive permutation group $G\subset S_n$ that is not $A_n$ or $S_n$ is at least $3n/7$, with equality when $G=O_7(2)$ acts primitively on $n=28$ elements. 

Now if $g$ is an element that moves at least $3n/7$ elements, then $\ind(g)\geq 3n/14$, with equality occurring in the case that $g$ is the product of $3n/14$ disjoint transpositions; this latter case does occur when $G=O_7(2)$, $n=28$, and $g\in G$ is a transvection. This yields the desired result. 
\end{proof}

\vspace{-.15in}
\begin{remark}\label{basicindboundgen}{\em 
The proof of Theorem~\ref{basicindbound} shows that the bound $\ind(G)\geq 3n/14$ is sharp, with equality occurring when $G=O_7(2)$ and $n=28$.  If $n\neq 28$, then we can similarly deduce from \cite{gm} that the number of points moved by any non-identity element $g$ is at least $4n/9$, achieved when $G=O_8^-(2)$, $n=27$, and $g$ is a transvection.  Hence $\ind(G)\geq 2n/9$ if $n\neq 28$.
Similarly, we may deduce that $\ind(G)\geq 7n/30$ if $n>28$ (with equality occurring when $G=O_8^+(2)$ and $n=120$), and $\ind(G)\geq n/4$ if $n<27$.}
\end{remark}

\vspace{-.15in}
\begin{remark}\label{cfsg}{\em 
The proofs of Theorems~\ref{genindbound} and~\ref{basicindbound} and Remark~\ref{basicindboundgen} rely on the classification of finite simple groups (CFSG). 
However, that these results hold for $n\leq 35$ can be deduced, without CFSG, using the  arguments above together with the classification of all primitive groups $G\subset S_n$ that have a non-identity element $g\in G$ moving $k$ letters, where $k\leq 14$; this~classification was sketched by Jordan and completed in works of Manning~\cite{Manning2p,Manning3p,Manning10,Manning68,Manning12,Manning14} (see also~\cite{Williamson} for an elegant treatment in the case that $k$ is prime).
 
The aforementioned general result of Babai~\cite[6.14]{Babai} also does not use CFSG; it states that the smallest number of points moved by a non-identity element in a primitive permutation group $G\subset S_n$ that is not $A_n$ or $S_n$ is at least $\frac12(\sqrt{n}-1)$, implying that $\ind(G)\geq \frac14(\sqrt{n}-1)$. This result is nearly as strong as Theorem~\ref{genindbound} and gives the correct order of magnitude. 

We thus check that none of the displayed Theorems and Corollaries in the introduction depend on CFSG except for the specific threshold 53 in the last assertion of Corollary~\ref{main3}(a), which may be safely replaced by 1090, and the bound $O(H^{n-bn/\!\log^2\!n})$ in Corollary~\ref{main3}(c), which may be replaced by $O(H^{n-b\sqrt{n}})$, by using only the above results of Jordan, Manning, and~Babai; see~Section~\ref{proofofmain3}.} 
\end{remark}

In the case of non-elemental primitive permutation groups $G$, we will make use of the following theorem relating indices of elements of  $G$ to indices of elements of the corresponding imprimitive permutation group $G'$ having the same underlying group. 

\begin{theorem}\label{nonelemental}
Let $G\subset S_m\wr S_r$ be a non-elemental primitive permutation group of type $(r,m,k)$ acting on $n={m\choose k}^r$ letters $(1\leq k<m/2)$, and let $G'$ denote the natural imprimitive action of the same underlying group on $rm$ letters.  Then for any non-identity element $g\in G$ corresponding to $g'\in G'$, we have
$$\frac{\ind(g)}{\ind(g')} > \frac n{3rm}  
\:.$$
\end{theorem}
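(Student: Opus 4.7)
The plan is to exploit the wreath-product decomposition $g=(g_1,\ldots,g_r;\pi)\in S_m\wr S_r$. For each cycle $(i_1,\ldots,i_{\ell_j})$ of $\pi$, form the ``cycle product'' $h_j:=g_{i_{\ell_j}}\cdots g_{i_1}\in S_m$. A standard cycle-structure calculation for wreath products then yields
\[
\ind(g)=M^r-\prod_{j} o_j,\qquad \ind(g')=\sum_{j}(\ell_j m-\tilde o_j),
\]
where $M:=\binom{m}{k}$, the index $j$ runs over cycles of $\pi$ of length $\ell_j$, $o_j$ counts orbits of $h_j$ on $\Delta=\binom{[m]}{k}$, and $\tilde o_j$ counts orbits of $h_j$ on $[m]$. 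The key asymmetry between the two actions is that $\ind(g')$ aggregates additively across the cycles of $\pi$, while $\ind(g)$ aggregates multiplicatively through $\prod o_j$; the proof leverages this to turn modest per-cycle estimates into a factor of $n/(rm)$.

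I would split into two cases. First, if $\pi\neq e$ then $\pi$ has at most $r-1$ cycles, so $\prod o_j\leq M^{r-1}$ and $\ind(g)\geq M^{r-1}(M-1)$; combined with the trivial bound $\ind(g')\leq rm$ and the fact that $M\geq 3$ in the non-elemental range (since $k\geq 1$ and $k<m/2$ force $m\geq 3$), this gives $\ind(g)/\ind(g')\geq M^{r-1}(M-1)/(rm)>M^r/(3rm)=n/(3rm)$. Second, if $\pi=e$ then $h_j=g_j$ and $\ind(g')=\sum_j\ind_{[m]}(g_j)$; bounding $\prod o_j\leq M^{r-1}\min_j o_j$ gives $\ind(g)\geq M^{r-1}\max_j\ind_\Delta(g_j)$, which I compare to $\ind(g')$ via the following lemma: \emph{for any non-identity $h\in S_m$ with $1\leq k<m/2$, one has $\ind_\Delta(h)\geq \frac{M(m-k)}{m(m-1)}\,\ind_{[m]}(h)$.}

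My approach to the lemma is double counting combined with Burnside's formula. For each $h'\in S_m$, count the pairs $(x,S)$ with $x$ moved by $h'$, $x\in S\in\Delta$, and $h'(x)\notin S$: counting by $x$ yields $\binom{m-2}{k-1}$ pairs per moved point, while counting by $S$ shows each non-fixed $k$-subset contributes at most $k$ such pairs, producing the pointwise estimate $|\{S:h'(S)\neq S\}|\geq (\binom{m-2}{k-1}/k)\,|\{x:h'(x)\neq x\}|$. Applying Burnside to $\langle h\rangle$ expresses each of $\ind_\Delta(h)$ and $\ind_{[m]}(h)$ as the average of the corresponding moved-point count over $h'\in\langle h\rangle$, so the pointwise bound lifts to the lemma (using $\binom{m-2}{k-1}/k=M(m-k)/(m(m-1))$). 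With the lemma in hand, choose $j^*$ to be an index maximizing $\ind_{[m]}(g_j)$ (which exists as $g\neq e$) and note $\ind_{[m]}(g_{j^*})\geq \ind(g')/r$; this yields $\ind(g)/\ind(g')\geq (n/(rm))(m-k)/(m-1)$, and the hypothesis $k<m/2$ guarantees $(m-k)/(m-1)>1/3$. The main obstacle is establishing the lemma with a sharp enough constant: the naive estimate $\ind_\Delta(h')\geq \tfrac12|\{S:h'(S)\neq S\}|$ would cost a factor of $2$ that proves fatal in the range $k$ near $m/2$, so sidestepping this loss by passing to averages over $\langle h\rangle$ (rather than arguing directly on $h$) is essential.
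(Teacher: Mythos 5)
Your key lemma is correct and is a genuinely different route from the paper's. The paper, in the case $\pi=e$, replaces $g_i$ by a product of $\lceil\ind(g_i)/2\rceil$ disjoint transpositions and then proves by a fiddly induction that a product of $y$ disjoint $2$-cycles moves $\geq\frac{8}{5k}\binom{m-1}{k-1}\,y$ elements of $\Delta$. Your lemma instead directly compares $\ind_\Delta(h)$ to $\ind_{[m]}(h)$ by double counting the incidences $(x,S)$ with $x\in S\in\Delta$ and $h'(x)\notin S$, and then uses Burnside on $\langle h\rangle$ to promote the pointwise moved-set inequality to the desired inequality of indices. This is cleaner, avoids the passage to transpositions entirely, and in fact gives a better constant than the paper's $2/5$ in this case (your constant is $\frac{m-k}{m-1}>\frac12$).

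However, the reduction to the lemma rests on a false identity: $\ind(g)=M^r-\prod_j o_j$ does not hold. Already for $g=(e,e;(12))$ acting on $\Delta^2$ with $|\Delta|=M=3$ (a non-identity element of the non-elemental primitive group $S_3\wr S_2$ of type $(2,3,1)$), one has $\prod_j o_j=3$, yet $g$ has $6$ orbits on $\Delta^2$, so $\ind(g)=3$, not $6$. In general the true number of $g$-orbits on $\Delta^r$ is \emph{at least} $\prod_j o_j$ (for $\pi=e$ the cycle count of $\sigma\times\tau$ is $\sum_{a,b}\gcd(\lambda_a,\mu_b)\ge c(\sigma)c(\tau)$, and the discrepancy only grows when $\ell_j>1$), so your formula \emph{overestimates} $\ind(g)$ --- the wrong direction for what you need. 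Concretely, your Case $\pi\neq e$ claim $\ind(g)\geq M^{r-1}(M-1)$ is false: the example above gives $\ind(g)=3<6$. The correct bound, as in the paper, loses a factor of $\tfrac12$: one has $|\mathrm{Fix}_{\Delta^r}(g)|=\prod_j|\mathrm{Fix}_\Delta(h_j)|\leq M^{r-1}$ when $\pi\neq e$, so $g$ moves $\geq M^r-M^{r-1}$ points and $\ind(g)\geq\tfrac12(M^r-M^{r-1})$; since $M\geq 3$ this still yields $>n/(3rm)$. In Case $\pi=e$, the inequality you actually use, $\ind(g)\geq M^{r-1}\ind_\Delta(g_{j^*})$, \emph{is} true for every index $j^*$, but not via your formula; the correct justification is that the cycle count of a product permutation satisfies $c(\sigma_1\times\cdots\times\sigma_r)\leq c(\sigma_{j^*})\,M^{r-1}$, because in $c(\sigma\times\tau)=\sum_{a,b}\gcd(\lambda_a,\mu_b)$ one has $\gcd(\lambda_a,\mu_b)\leq\mu_b$ termwise. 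With these two repairs your argument is complete and is, in Case $\pi=e$, shorter than the paper's.
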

 
 \begin{proof}
 Write $g=(g_1,\ldots,g_r,h)\in S_m^r\rtimes S_r$, where each $g_i\in S_m$ and $h\in S_r$. 
 If $h\in S_r$ is not the identity, then $g$ moves at least ${m\choose k}^{r}-{m\choose k}^{r-1}$ elements (with equality occurring when $g_1=\cdots=g_r=1$ and $h$ is a transposition); thus \smash{$\ind(g)\geq \frac12 [{m\choose k}^{r}-{m\choose k}^{r-1}]$}. Since $\ind(g')<rm$, we have 
 $$\frac{\ind(g)}{\ind(g')}> \frac{{m\choose k}^{r}-{m\choose k}^{r-1}}{2rm}\geq  \frac{{m\choose k}^{r-1}[{m\choose k}-1]}{2rm}\geq  \frac{{m\choose k}^{r-1}\frac23{m\choose k}}{2rm}
 \geq \frac{1}{3rm} {m\choose k}^{r}$$
 since $1\leq k< m/2$ and $m\geq 3$.

If $h$ is the identity, then let $i$ denote the index such that $z=\ind(g_i)$ is maximal.  
Then $g_i$ is the product of disjoint nontrivial cycles.  The element $g_i$ moves a $k$-element subset of $\{1,\ldots,m\}$ if and only if this $k$-element subset does not consist of the indices of some subset of these cycles together with fixed points of $g_i$. The number of such $k$-element sets only decreases if one breaks these cycles down further; we break each cycle of $g_i$ into disjoint 2-cycles together with at most one 1-cycle.  
We thereby obtain an element $\bar g_i$ that is the product of at least $z/2$ disjoint transpositions (with equality here when $g_i$ is the product of odd-length cycles). 

Let $f(m,k,y)$ denote the number of $k$-element subsets of $\{1,\ldots,m\}$ $(1\leq k<m/2)$ moved by an element $\sigma\in S_m$ that is the product of $y$ disjoint 2-cycles. We prove by induction that 
\begin{equation}\label{fmky}
f(m,k,y)\geq \frac8{5k} {m-1\choose k-1}\cdot y.
\end{equation}
Since $f(m,1,y)=2y\geq \frac85{m-1\choose 0}\cdot y$
and $$f(m,2,y) = {m\choose 2}-y-{m-2y\choose 2}= 2(m - y-1) y\geq \frac8{5\cdot 2}{m-1\choose 1}\cdot y,$$ the result is clearly true for $k=1$ and also for $k=2$. If $m$ and $k$ are even and $y=m/2$,~then
$$f(m,k,y)= {m\choose k} - {m/2\choose k/2}\geq \frac45 {m\choose k}\geq \frac45 \cdot \frac{2y}k{m-1\choose k-1}=\frac8{5k} {m-1\choose k-1}\cdot y$$
since $m\geq 5$ if $k$ is even. Otherwise, $\sigma$ has at least one fixed point on $\{1,\ldots,m\}$, say $m$; then, by conditioning on whether $m$ is included in the set, we see that when $k>2$ the number of $k$-subsets moved by $\sigma$ is
$$f(m,k,y)=f(m-1,k-1,y)+f(m-1,k,y)$$
and thus by the induction hypothesis
$$f(m,k,y)\geq \frac8{5(k-1)}{m-2\choose k-2}\cdot y+\frac8{5k}{m-2\choose k-1}\cdot y\geq 
\frac8{5k} {m-1\choose k-1}\cdot y,$$ as claimed.

Returning to the element $\bar g_i$ that is the product of at least $z/2$ disjoint $2$-cycles on $\{1,\ldots,m\}$, we see that such a $\bar g_i$ moves at least $\frac 8{5k}{m-1\choose k-1}\cdot z/2$ subsets of $\{1,\ldots,m\}$ of size~$k$. We conclude that $g=(g_1,\ldots,g_i,\ldots,g_r,1)$ moves at least 
$\frac 8{5k}{m-1\choose k-1}\cdot z/2 \cdot{m\choose k}^{r-1}$ subsets of size $k$, so that 
$\ind(g)\geq \frac 4{5k}{m-1\choose k-1}\cdot z/2 \cdot{m\choose k}^{r-1}$.  Since $\ind(g')\leq rz=r\cdot\ind(g_i)$, we conclude~that 
\begin{equation*}
\ind(g)\geq \frac 4{2\cdot 5rk}{m-1\choose k-1}\cdot{m\choose k}^{r-1} \ind(g')
> \frac 1{3rm}{m\choose k}^{r} \ind(g'),
\end{equation*} as desired.
 \end{proof}

\vspace{-.1in}

\begin{corollary}\label{nonebound2}
Let $G$ be a non-elemental primitive permutation group of type $(r,m,k)$ acting on $n={m\choose k}^r$ letters, and let $G'$ be the corresponding imprimitive permutation group on $rm$ letters.
Then for any non-identity element $g\in G$ corresponding to $g'\in G'$, we have 
$$\frac{\ind(g)}{\ind(g')} \gg \sqrt n.$$
\end{corollary}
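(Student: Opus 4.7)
The plan is to derive Corollary~\ref{nonebound2} essentially mechanically from Theorem~\ref{nonelemental}, which already furnishes $\ind(g)/\ind(g') > n/(3rm)$. Since the stated conclusion is $\ind(g)/\ind(g') \gg \sqrt n$, it suffices to prove the purely combinatorial inequality $n \gg (rm)^2$; then $\sqrt n \gg rm$, and thus $n/(3rm) = \sqrt n \cdot \sqrt n/(3rm) \gg \sqrt n$. My concrete target will be the clean bound $n \geq r^2 m^2/4$, which implies $\sqrt n \geq rm/2$ and hence $\ind(g)/\ind(g') > \sqrt n/6$.

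To verify $n = \binom{m}{k}^r \geq r^2 m^2/4$, I would split into three cases according to the non-elemental hypotheses ($1 \leq k < m/2$ and either $r > 1$ or $k > 1$, which together force $m \geq 3$ and, when $k \geq 2$, $m \geq 2k+1 \geq 5$). First, when $r = 1$ and $k \geq 2$, the unimodality of $\binom{m}{j}$ together with $k \leq m-2$ gives $\binom{m}{k} \geq \binom{m}{2} = m(m-1)/2 \geq m^2/4$, so $n \geq m^2/4 = r^2 m^2/4$. Second, when $k = 1$ and $r \geq 2$, one has $n = m^r$ and must check $m^{r-2} \geq r^2/4$: this is trivial at $r = 2$, and for $r \geq 3$ it follows from $m \geq 3$ together with the elementary inequality $3^{r-2} \geq r^2/4$ (immediate at $r = 3, 4$, and persisting since $3^{r-2}$ grows exponentially while $r^2/4$ grows polynomially). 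Third, when $k \geq 2$ and $r \geq 2$, the bound $\binom{m}{k} \geq m^2/4$ from the first case yields $n \geq (m^2/4)^r \geq r^2 m^2/4$ for all such $r$ and $m \geq 5$.

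Combining the three cases gives $n \geq r^2 m^2/4$ uniformly, and substitution into Theorem~\ref{nonelemental} completes the proof. There is essentially no obstacle here: the argument is a one-line deduction from Theorem~\ref{nonelemental} together with a routine case analysis of the binomial size of $\binom{m}{k}^r$ relative to $(rm)^2$. The only conceptual point worth highlighting is that $rm$ measures the size of the imprimitive $G'$-action, so the inequality $n \gg (rm)^2$ quantifies how much larger the primitive action of a non-elemental group is than its imprimitive counterpart—and it is exactly this size gap that upgrades the ratio $n/(3rm)$ from linear in $n/(rm)$ to $\gg\sqrt n$.
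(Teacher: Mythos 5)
Your proof is correct and follows exactly the same route as the paper: both establish $r^2m^2 \ll \binom{m}{k}^r = n$ using the non-elemental hypothesis ($r>1$ or $k>1$) and then substitute $rm \ll \sqrt{n}$ into the bound $\ind(g)/\ind(g') > n/(3rm)$ from Theorem~\ref{nonelemental}. The paper states the inequality $r^2m^2 \ll n$ without proof; your explicit three-case verification with the clean constant $n \geq r^2m^2/4$ just fills in those details.
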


\begin{proof}
Since at least one of the positive integers $r$ or $k$ is at least 2, we have that $r^2m^2\!\ll {m\choose k}^r$. Thus, by Theorem~\ref{nonelemental}, 
$$\frac{\ind(g)}{\ind(g')}\geq \frac n{3rm} \gg {n^{1/2}};$$
the exponent $1/2$ here is of course optimal and is achieved when $\{r,k\}=\{1,2\}$. 
\end{proof}

\noindent
Theorem~\ref{nonelemental} and Corollary~\ref{nonebound2} have the following implication for the discriminants of number fields having associated Galois group a non-elemental primitive permutation group $G$.

\begin{corollary}\label{nonenumfield}
Let $n={m\choose k}^r$, and let $K$ be a number field of degree $n$ with associated Galois group a non-elemental primitive permutation group $G\subset S_n$ of type $(r,m,k)$. Let $K'$ be a number field of degree $rm$ in the Galois closure $L$ of $K$ such that $G$ acts on the embeddings of $K'$ with its natural imprimitive action $G'\subset S_{rm}$, i.e., $K'$ is the $G'$-resolvent of $K$.  Then 
$$
|\Disc(K)| \gg_n |\Disc(K')|^{n/(3rm)},
$$
which is $\gg|\Disc(K')|^{c\sqrt{n}}$ for some constant $c>0$. 
\end{corollary}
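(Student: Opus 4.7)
The plan is to compare $v_p(\Disc(K))$ and $v_p(\Disc(K'))$ prime by prime, applying Theorem~\ref{nonelemental} at the tamely ramified primes and absorbing the bounded contribution from the wildly ramified primes into the implicit constant of $\gg_n$. To set up, let $L$ denote the Galois closure of $K$; by hypothesis $K'\subseteq L$, and we let $G=\Gal(L/\Q)$. For each rational prime $p$, fix a prime $\mathfrak{P}$ of $L$ above $p$ with associated inertia subgroup $I_p\subseteq G$. Since indices of elements of $G$ (resp.\ $G'$) in the $n$-letter (resp.\ $rm$-letter) action are conjugation invariant, the choice of $\mathfrak{P}$ is immaterial.

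I would handle first the tame primes $p\nmid |G|$, where $I_p$ is cyclic; let $\sigma_p\in I_p$ be a generator. Exactly the computation at the heart of the proof of Proposition~\ref{valindcon} identifies
$$v_p(\Disc(K))=\ind_G(\sigma_p),\qquad v_p(\Disc(K'))=\ind_{G'}(\sigma_p),$$
since the cycle structure of $\sigma_p$ in either permutation action records the local ramification and residue-degree data of the corresponding field. When $\sigma_p\neq 1$, Theorem~\ref{nonelemental} applied to $\sigma_p$ then yields $v_p(\Disc(K))>\tfrac{n}{3rm}\,v_p(\Disc(K'))$; when $\sigma_p=1$ both sides vanish. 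Summing over the tame primes weighted by $\log p$ gives
$$\sum_{p\,\nmid\,|G|} v_p(\Disc(K))\log p\;\geq\;\frac{n}{3rm}\sum_{p\,\nmid\,|G|} v_p(\Disc(K'))\log p.$$

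For the wild primes $p\mid|G|$, of which there are $O_n(1)$, a standard ramification-theoretic bound---which follows, for instance, from Krasner's theorem that $\Q_p$ has only finitely many extensions of a given degree---shows that $v_p(\Disc(K'))$ is bounded by a constant depending only on $p$ and $rm$. Summing, the total wild contribution to $\log|\Disc(K')|$ is at most a constant $C_n$ depending only on $n$. Combining with the tame bound,
$$\log|\Disc(K)|\;\geq\;\frac{n}{3rm}\bigl(\log|\Disc(K')|-C_n\bigr),$$
which yields $|\Disc(K)|\gg_n|\Disc(K')|^{n/(3rm)}$. The assertion $\gg|\Disc(K')|^{c\sqrt{n}}$ then follows from Corollary~\ref{nonebound2}, which supplies $n/(3rm)\gg\sqrt{n}$, together with $|\Disc(K')|\geq 1$. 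The only subtle point is that the bound at the wild primes be uniform in $K'$ and not merely finite for each individual field; this uniformity is precisely what Krasner-type finiteness delivers, and it is what ensures that the wild contribution gets folded into the $n$-dependent implicit constant without affecting the exponent $n/(3rm)$.
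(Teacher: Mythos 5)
Your proof is correct and follows essentially the same strategy as the paper's: compare discriminant valuations prime by prime at the tame primes via inertia generators and Theorem~\ref{nonelemental}, then absorb the bounded wild contribution into the $\gg_n$ constant. The paper phrases the tame condition as $p>n$ (equivalent since $G\subset S_n$ forces $|G|\mid n!$), while you use $p\nmid|G|$; both are fine, and your spelling out of the uniform Krasner-type bound at the wild primes is exactly the point the paper leaves implicit.
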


\begin{proof}
If $p>n$, then $p$ is tamely ramified in $L$. Let $g\in G$ (with image $g'\in G'$) be a generator of an inertia group at $p$. Then  $v_p(\Disc(K))=\ind(g)$ and $v_p(\Disc(K'))=\ind(g')$. The result now follows from Theorem~\ref{nonelemental} and Corollary~\ref{nonebound2}. 
\end{proof}
\vspace{-.15in}

\section{Results on counting number fields}

As in the introduction, let $F_n(G,X)$ denote the number of number fields of degree n having associated Galois group G and absolute discriminant at most X. The following upper bounds for $F_n(G,X)$ that are independent of $G$ are due to Schmidt~\cite{Schmidt} and Lemke Oliver and Thorne~\cite{LOT2} (see also \cite{EV} and \cite{Couveignes}), respectively.

\begin{theorem}[Schmidt~\cite{Schmidt} and Lemke Oliver--Thorne~\cite{LOT2}]\label{slot}
We have $F_n(G,X)=$ \linebreak $O(X^{(n+2)/4})$ for any $n>1$.  There is a constant $r>0$ such that $F_n(G,X)=O(X^{r\log^2\!n})$, which yields a strictly smaller upper bound on $F_n(G,X)$ when $n\geq 95$. 
\end{theorem}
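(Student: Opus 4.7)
\medskip
\noindent
\textbf{Proof proposal.} Both bounds are geometry-of-numbers estimates independent of $G$, so I would bound $F_n(X) := \sum_G F_n(G,X)$ throughout.

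For Schmidt's bound $F_n(X) = O(X^{(n+2)/4})$, the plan is to show that every degree-$n$ field $K$ with $|\Disc(K)|\leq X$ is generated by an integral primitive element $\alpha$ whose minimal polynomial has coefficients in a controlled box. Embed $\O_K$ as a rank-$n$ lattice of covolume $\sqrt{|\Disc(K)|}\leq X^{1/2}$ in $K\otimes\R\cong\R^n$, and pass to the rank-$(n-1)$ trace-zero sublattice $\O_K^0$. Minkowski's first theorem (sharpened via Hunter's theorem) produces $\alpha\in\O_K$ generating $K$ with
\[
|\Tr(\alpha)| \;\leq\; n/2 \qquad \text{and} \qquad \max_i |\alpha^{(i)}| \;\ll_n\; X^{1/(2(n-1))},
\]
the first enforced by translating $\alpha\mapsto\alpha+m$ for an appropriate $m\in\Z$, the second by Minkowski applied to a ball in the trace-zero hyperplane. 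The minimal polynomial $f(x)=x^n+a_1x^{n-1}+\cdots+a_n$ then satisfies $|a_1|\leq n/2$ (only $O(n)$ integer choices) and $|a_k|\ll_n X^{k/(2(n-1))}$ for $2\leq k\leq n$ by bounding the elementary symmetric functions of the conjugates. Counting integer tuples $(a_1,\ldots,a_n)$ in this box gives at most
\[
O_n(1)\cdot\prod_{k=2}^n X^{k/(2(n-1))}\;=\;O_n\!\bigl(X^{(n+2)(n-1)/(4(n-1))}\bigr)\;=\;O_n\!\bigl(X^{(n+2)/4}\bigr)
\]
candidate minimal polynomials, each giving at most one field $K$, proving the first bound. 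The crucial accounting step is that pinning $a_1$ to $O(1)$ by the trace normalization drops one summand $1/(2(n-1))$ from the exponent sum $n(n+1)/(4(n-1))$, landing exactly on $(n+2)/4$.

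The main obstacle is that the short $\alpha$ produced by Minkowski may fail to be primitive, i.e., may lie in a proper subfield $F\subsetneq K$. This is handled by induction on $n$: if every short lattice vector of $\O_K^0$ lies in some subfield $F$, then $K$ contains a subfield of degree $m<n$, whose contribution is controlled by $F_m(X)$ via the inductive hypothesis together with a count of the degree-$[K:F]$ extensions of $F$ using discriminant relations. For the Lemke Oliver-Thorne improvement $F_n(X)=O(X^{c\log^2 n})$, the plan is to iterate this subfield idea systematically: combining a theta-function count of short lattice vectors with an Ellenberg-Venkatesh geometric sieve, at each stage one extracts a proper subfield $F$ of degree roughly $n/2$, applies the inductive bound for $F$, and counts $K/F$ extensions. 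Each recursion layer contributes exponent $O(\log n)$ and the recursion has depth $O(\log n)$, yielding $X^{c\log^2 n}$. The hardest step is the sieve itself: one must simultaneously bound short imprimitive vectors (via theta-function inputs) and count candidate subfields at each level, keeping both contributions under $O(\log n)$ in the exponent.
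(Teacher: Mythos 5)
The paper does not prove Theorem~\ref{slot} at all; it is cited verbatim from Schmidt~\cite{Schmidt} and Lemke~Oliver--Thorne~\cite{LOT2}, so there is no in-paper argument against which to compare your proposal. What I can do is assess your sketch against the published proofs and against the paper's use of these ideas in Theorem~\ref{primcount}.

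Your account of the Schmidt bound is essentially the correct one: pass to the trace-zero sublattice $\O_K^0$ of covolume $\asymp_n\sqrt{|\Disc(K)|}$, use Minkowski to find $\alpha$ with $\|\alpha\|\ll_n D^{1/(2(n-1))}$, normalize $a_1=\Tr(\alpha)$ by a $\Z$-translate, and count the boxes $|a_k|\ll D^{k/(2(n-1))}$, $2\le k\le n$; the exponent sum $\sum_{k=2}^n k/(2(n-1))=(n+2)/4$ checks out. One phrase is misleading as written: you assert ``each [polynomial] giving at most one field $K$'' in the first paragraph, which is only true once the primitivity of $\alpha$ is secured---if $\alpha$ lies in a proper subfield $F\subsetneq K$ then $f$ determines only $F$, and different $K$'s sharing $F$ collapse to the same $f$. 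You acknowledge this in the next paragraph via the subfield induction, which is indeed how Schmidt resolves it, but the induction also needs a relative count of degree-$[K:F]$ extensions of a given $F$ with bounded relative discriminant, not just $F_m(X)$; that step is glossed over. Note that in the paper's own variant of this argument (Theorem~\ref{primcount}) the issue disappears entirely because $G$ is assumed primitive, forcing any $\alpha\in\O_K\setminus\Z$ to generate $K$; for the general statement you do need the workaround you describe. Your sketch of the Lemke Oliver--Thorne improvement is quite compressed; the right keywords are there (relative/iterated Minkowski over a tower of subfields, Ellenberg--Venkatesh style input, logarithmic recursion depth), but ``each layer contributes exponent $O(\log n)$ and the depth is $O(\log n)$'' is a plausibility heuristic rather than a proof, and the actual LOT argument requires rather more work to make the relative lattice counts uniform in the base field. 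Since the paper only cites this theorem, that vagueness is acceptable here; just be aware that the LOT part is far from complete as a derivation.
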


Better bounds  on $F_n(G,X)$ than those in Theorem~\ref{slot}, even when $n>5$, are known for many specific choices of permutation group~$G$ (see, e.g.,~\cite{Wright,KM,BW,LR,Dummit,Alberts,W, F}).  
Here we wish to prove improved bounds on $F_n(G,X)$ for {primitive} permutation groups $G$.

We begin with an improvement on the first assertion in Theorem~\ref{slot} for general primitive permutation groups of index $\geq 2$. 

\begin{theorem}\label{primcount}
Let $G$ be a primitive permutation group on $n$ letters.  Then $$F_n(G,X)=O_\epsilon(X^{\frac{n+2}4-1+\frac{1}{\ind(G)}+\epsilon}).$$
\end{theorem}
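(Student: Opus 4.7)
The plan is to refine Schmidt's bound $F_n(X)=O(X^{(n+2)/4})$ by exploiting Proposition~\ref{valindcon}, which forces the absolute discriminant $|\Disc(K)|$ of any $G$-field $K$ to be $k$-powerful for $k=\ind(G)$. Since $k$-powerful integers up to $X$ have counting function $\ll X^{1/k}$, this sparsity should convert Schmidt's global estimate into a refinement of order $X^{-1+1/k}$, matching the target exponent $(n+2)/4-1+1/k+\epsilon$.

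First I would recall Schmidt's construction (in the form given by \cite{Schmidt, EV}): to each degree $n$ number field $K$ with $|\Disc(K)|=D$ one associates an algebraic integer generator $\alpha\in\O_K$ whose minimal polynomial $f_\alpha(x)=x^n+a_1x^{n-1}+\cdots+a_n$ lies in an explicit bounded region $R(D)\subset\R^n$ (a Minkowski-reduced ``Schmidt box''), with the property that Schmidt's global bound arises from summing over $D\leq X$ the volumes of these regions. Thus bounding $F_n(G,X)$ reduces to counting integer polynomials $f\in R(D)$ with $|\Disc(K_f)|=D$, and then summing over admissible $D$.

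The crucial input will be the pointwise-type estimate
\[
\#\bigl\{f\in R(D)\cap\Z^n : |\Disc(K_f)|=D\bigr\}=O_\epsilon(D^{(n+2)/4-1+\epsilon}),
\]
which saves a factor of $D$ over the full volume of $R(D)$. Since $|\Disc(f)|=i^2 D$ with $i=[\O_{K_f}:\Z[\alpha]]$, the set in question is contained in the union over $i$ of codimension-one subvarieties $\{f:\Disc(f)=i^2 D\}$ inside $R(D)$. For each such $i$ I would apply the quantitative Ekedahl-type geometric sieve of~\cite{geosieve} to count integer points on that subvariety within $R(D)$; the secondary sum over $i$ will contribute only a factor of $D^\epsilon$ since only $i\ll D^{O(1)}$ can occur in the range where $R(D)$ is nonempty.

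Combining this pointwise bound with Proposition~\ref{valindcon} and the elementary estimate $\sum_{D\leq X,\ k\text{-powerful}} D^\beta\ll_\beta X^{\beta+1/k}$ (valid for $\beta>-1$), we obtain
\[
F_n(G,X)\ll_\epsilon \sum_{\substack{D\leq X\\ D\text{ is }k\text{-powerful}}} D^{(n+2)/4-1+\epsilon}\ll_\epsilon X^{(n+2)/4-1+1/k+\epsilon},
\]
which is the desired bound. The main obstacle will be establishing the pointwise estimate in the previous step: Schmidt's original argument produces only a cumulative bound, so extracting the codimension-one saving for each individual $D$ requires carefully reworking his count in conjunction with the quantitative geometric sieve of~\cite{geosieve}, while simultaneously controlling the auxiliary sum over the ring index $i$ (in particular ruling out a large contribution from polynomials where $i$ is comparable to a substantial power of $D$, via a further dyadic decomposition in $i$).
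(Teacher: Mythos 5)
Your overall strategy matches the paper's at the top level: both arguments aim for a pointwise bound of the shape $O_\epsilon(D^{(n+2)/4-1+\epsilon})$ on the number of $G$-fields with fixed absolute discriminant $D$, then sum over $k$-powerful $D\leq X$ using the $O(X^{1/k})$ count. However, the mechanism you propose for the pointwise bound does not deliver the required saving of a full factor of $D$. Counting integer polynomials on the single codimension-one hypersurface $\{\Disc(f)=i^2 D\}$ inside the Schmidt box $R(D)$---whose side lengths are $D^{i/(2n-2)}$---saves at best one side length, and the longest side (in the $a_n$ direction) has length only $D^{n/(2n-2)}\approx D^{1/2}$. So this route yields roughly $D^{(n+2)/4-1/2}$, not $D^{(n+2)/4-1}$. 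There is also a secondary issue: the quantitative geometric sieve of \cite{geosieve} is designed to bound the number of lattice points satisfying "mod $p$" congruence conditions over varying large primes $p$ (i.e., points near a subvariety mod $p$), not to count lattice points lying exactly on a fixed affine hypersurface; invoking it for the latter is a misapplication. Finally, your claim that the auxiliary sum over the ring index $i$ contributes only $D^\epsilon$ needs justification, since each $i$ up to a power of $D$ can occur and each contributes a comparable count.

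The missing ingredient is a way to constrain \emph{several} of the trailing coefficients modulo each prime $p\mid D$, not just one. The paper accomplishes this via Proposition~\ref{indexcodim}: for $p\mid D$ with $p>n$, the reduction of the minimal polynomial $P(x)$ modulo $p$ has index at least $v_p(D)\geq k$, and Proposition~\ref{indexcodim} shows that once $a_1,\ldots,a_{n-v_p(D)}$ are fixed, the remaining coefficients $a_{n-v_p(D)+1},\ldots,a_n$ take at most $O_n(1)$ values modulo $p$. Setting $D_j=\prod_{p:\,v_p(D)\geq j}p$, this forces $a_{n-j+1}$ into $O_\epsilon(D^\epsilon)$ residues modulo $D_j$, and since $\prod_j D_j=D$, multiplying the savings across all coordinates yields the full factor of $D$. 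A further subtlety you would also need to handle is that this only works because $D_j\ll D^{1/j}$ is dominated by the $j$th box side length $D^{(n-j+1)/(2n-2)}$ for $2\leq j\leq n-1$, with the case $j=1$ rescued by the hypothesis $\ind(G)\geq 2$ (which makes $D_1=D_2$). Without Proposition~\ref{indexcodim} or an equivalent multi-coefficient congruence argument, the pointwise bound you assert cannot be reached, and the proof as sketched has a genuine gap.
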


\noindent
We note that Theorem~\ref{primcount} improves upon the corresponding exponents for specific primitive Galois groups obtained in the works of Larson--Rolen~\cite{LR} and Dummit~\cite{Dummit} when~$n<95$. 

To prove Theorem~\ref{primcount}, we use the following proposition about polynomials having a given index $k$ over a field $L$.  A polynomial $P(x)\in L[x]$ of degree $n$ has {\it index $k$} over $L$ if it factors as $\prod P_i^{e_i}$ where $P_i$ is irreducible over $L$ of degree $f_i$ and $\sum (e_i-1) f_i=k$. 

\begin{proposition}\label{indexcodim}
Let $P(x)=x^n+a_1x^{n-1}+\cdots+a_n$ be a polynomial over a field $K$ with characteristic prime to $n!$.  If the values of $a_1,\ldots,a_{n-k}\in K$ are fixed, then there are at~most $q(k,n-k)\cdot(n-k)!$ values of $a_{n-k+1},\ldots,a_n\in K$ such that $P(x)$ has index $k$ over $K$, where~$q(k,n-k)$ denotes the number of partitions of $k$ into at most $n-k$ parts. 
\end{proposition}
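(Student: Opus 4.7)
The plan is to classify polynomials $P$ of index $k$ by the multiplicity pattern of their roots over $\bar K$ and bound each class separately via affine Bezout. Since $\mathrm{char}\,K \nmid n!$, every irreducible factor of $P$ is separable, so over $\bar K$ we can write $P = \prod_{j=1}^{r}(x-\gamma_j)^{m_j}$ with distinct $\gamma_j \in \bar K$ and $m_j \geq 1$. Matching this with the factorization $P = \prod P_i^{e_i}$ over $K$---each $P_i$ splitting over $\bar K$ into $f_i$ distinct linear factors of multiplicity $e_i$---shows that $\ind(P) = \sum_j (m_j - 1) = n - r$, so $\ind(P) = k$ iff $r = n-k$. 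The multiset $\{m_1,\ldots,m_{n-k}\}$ is then a partition of $n$ into exactly $n-k$ positive parts---equivalently, by subtracting $1$ from each part, a partition $\lambda$ of $k$ into at most $n-k$ parts---yielding $q(k,n-k)$ patterns $\lambda$ to consider.

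For each fixed $\lambda$, I would bound by $(n-k)!$ the number of $P$ with pattern $\lambda$ and prescribed $a_1,\ldots,a_{n-k}$. Fixing an ordering $(m_1,\ldots,m_{n-k})$ of $\lambda$, I parametrize such polynomials by $(\gamma_1,\ldots,\gamma_{n-k}) \in \bar K^{n-k}$ via $P = \prod_j (x-\gamma_j)^{m_j}$. Each coefficient $a_\ell$ is (up to sign) the $\ell$-th elementary symmetric function in the $n$-element multiset of roots of $P$, hence a polynomial of degree $\ell$ in the $\gamma_j$. Affine Bezout then bounds the number of isolated $\bar K$-solutions of the system $\{a_\ell(\gamma) = a_\ell^{\ast}\}_{\ell=1}^{n-k}$ by $\prod_{\ell=1}^{n-k} \ell = (n-k)!$, and summing over the $q(k,n-k)$ patterns $\lambda$ gives the proposition.

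The main step requiring care is to confirm that the solutions relevant to our count---those with all $\gamma_j$ distinct---are indeed isolated in the zero scheme of this system. I would verify this by switching to power-sum coordinates $p_\ell = \sum_j m_j \gamma_j^\ell$ via Newton's identities (valid since $\mathrm{char}\,K \nmid n!$) and computing
\[
\det\bigl[\partial p_\ell/\partial \gamma_j\bigr] \;=\; \det\bigl[\ell\,m_j\,\gamma_j^{\ell-1}\bigr] \;=\; (n-k)!\,\Bigl(\prod_{j} m_j\Bigr)\prod_{i<j}(\gamma_j - \gamma_i),
\]
which is nonzero whenever the $\gamma_j$ are distinct, since $\mathrm{char}\,K$ is prime to $(n-k)!$ and to each $m_j \leq n$. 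Any positive-dimensional component of the zero scheme must therefore be confined to the non-distinct locus $\{\gamma_i = \gamma_j \text{ for some } i \neq j\}$, which corresponds to strictly coarser multiplicity patterns already accounted for under other $\lambda$'s; once this confinement is secured, affine Bezout applies cleanly to the distinct-$\gamma$ solutions for each $\lambda$, completing the proof.
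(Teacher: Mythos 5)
Your argument is correct and reaches the same bound, but it replaces the paper's key lemma (Proposition~\ref{genvan}) with a genuinely different strategy. Both proofs reduce to the weighted power-sum system via Newton's identities and then apply Bezout with degree product $(n-k)!$; the justifications for invoking Bezout diverge, however. The paper's Proposition~\ref{genvan} argues globally: by a Vandermonde/subset-sum induction (the hypothesis $\char K \nmid n!$ entering via the fact that no nonempty subset of $\{m_1,\ldots,m_r\}$ sums to zero), it shows the homogeneous system has only the trivial solution, hence the projective closure of the affine solution variety has no points at infinity and is a genuinely zero-dimensional complete intersection, to which classical projective Bezout applies directly. You argue locally: the Jacobian $\det\bigl[\ell\,m_j\,\gamma_j^{\ell-1}\bigr]=(n-k)!\bigl(\prod_j m_j\bigr)\prod_{i<j}(\gamma_j-\gamma_i)$ is nonzero at every distinct-root solution (again using $\char K\nmid n!$), so those solutions are isolated points of the zero scheme, and you then invoke the form of Bezout that bounds the number of \emph{isolated} solutions of an affine system by the degree product, without having to rule out positive-dimensional excess components on the diagonal locus. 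Both routes are valid. Yours is more concrete and bypasses the homogeneous-system induction; the paper's is slightly more self-contained in that it only needs the classical (properly zero-dimensional) case of Bezout rather than the refined statement about isolated solutions in the presence of excess intersection.
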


\noindent
Proposition~\ref{indexcodim} is proven in turn using the following proposition on weighted sums of powers.

\pagebreak 

\begin{proposition}\label{genvan}
Let $m_1,\ldots,m_r$ be constants in a field $L$ such that no nonempty subset of $\{m_1,\ldots,m_r\}$ sums to zero.  Let $c_1,\ldots,c_r\in L$ be arbitrary.  Then the system of equations 
\begin{equation}\label{psystem}
\begin{array}{rl}
m_1 x_1 + m_2 x_2 + \cdots + m_r x_r &= c_1 \\[.045in]
m_1 x_1^2 + m_2 x_2^2 + \cdots + m_r x_r^2 &= c_2 \\[.05in]
m_1 x_1^3 + m_2 x_2^3 + \cdots + m_r x_r^3 &= c_3 \\[.00in]
 \hspace{-1in}\vdots\hspace{.832in}& \\[.03in]
 m_1 x_1^r + m_2 x_2^r + \cdots + m_r x_r^r &= c_r 
\end{array}
\end{equation}
has only finitely many $($in fact, at most $r!)$ solutions in $L$.
\end{proposition}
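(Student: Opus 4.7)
The plan is to combine Bezout's theorem with a Vandermonde-type Jacobian calculation, working throughout over the algebraic closure $\overline{L}$ of $L$. The system (\ref{psystem}) consists of $r$ polynomial equations in $r$ variables of degrees $1, 2, \ldots, r$ respectively, so the affine form of Bezout's theorem yields the bound $1 \cdot 2 \cdots r = r!$ on the number of solutions over $\overline{L}$ (and hence over $L$), provided that the solution set is zero-dimensional. Thus the entire proposition reduces to proving finiteness of the solution set.

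To establish finiteness I would argue by contradiction: suppose some irreducible component $V \subseteq \overline{L}^r$ of the solution set has positive dimension. Since $V$ is irreducible, for each pair $i \neq j$ the locus $\{x_i = x_j\} \cap V$ is either all of $V$ or a proper subvariety, so the relation $i \sim j$ iff $x_i \equiv x_j$ identically on $V$ yields a well-defined partition $B_1 \sqcup \cdots \sqcup B_s$ of $\{1, \ldots, r\}$. On $V$ every coordinate is determined by the common value $y_k$ taken by the $x_i$ with $i \in B_k$, so $V$ maps isomorphically onto a positive-dimensional subvariety $V' \subseteq \overline{L}^s$ on which the coordinates $y_1, \ldots, y_s$ are pairwise distinct.

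Substituting into (\ref{psystem}), the first $s$ of those equations become $\sum_{k=1}^s M_k y_k^j = c_j$ for $j = 1, \ldots, s$, where $M_k := \sum_{i \in B_k} m_i$. The subset-sum hypothesis on $\{m_1, \ldots, m_r\}$ is used here, and only here, to guarantee that each $M_k$ is nonzero. Viewing these $s$ equations as a morphism $\psi : \overline{L}^s \to \overline{L}^s$, a direct computation factors the Jacobian as $\mathrm{diag}(1, 2, \ldots, s) \cdot V(y) \cdot \mathrm{diag}(M_1, \ldots, M_s)$, where $V(y)$ is the Vandermonde matrix with entries $y_l^{j-1}$; hence $\det D\psi(y) = s! \cdot \prod_k M_k \cdot \prod_{k<l}(y_l - y_k)$, which is nonzero at every point of $V'$. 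Consequently $\psi$ is \'etale along $V'$, forcing the fiber of $\psi$ over $(c_1, \ldots, c_s)$ to be zero-dimensional at points of $V'$---contradicting the fact that $V'$ is a positive-dimensional subvariety of that fiber.

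The main obstacle I anticipate is organizational rather than conceptual: one must state the equivalence-relation reduction precisely enough that the projected $V' \subseteq \overline{L}^s$ genuinely has distinct coordinates and positive dimension, and then invoke the Jacobian criterion for \'etaleness cleanly. The subset-sum hypothesis must be applied exactly at the collapse step to ensure each $M_k \neq 0$, and one should note that the factor $s!$ in $\det D\psi$ is harmless in the characteristic-zero setting relevant to the intended application in Proposition~\ref{indexcodim}.
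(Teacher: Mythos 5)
Your proof takes a genuinely different route from the paper's. The paper first proves the \emph{homogeneous} case ($c_1 = \cdots = c_r = 0$) by regarding (\ref{psystem}) as a linear system in the unknowns $m_1, \ldots, m_r$, whose coefficient determinant is the Vandermonde-type quantity $\prod_i x_i \prod_{i<j}(x_i - x_j)$; the subset-sum hypothesis forbids $m_1 = \cdots = m_r = 0$, so that determinant must vanish, and one then inducts on $r$ by dropping a vanishing $x_i$ or merging a coinciding pair. The general case is reduced to the homogeneous one by homogenization: a positive-dimensional projective solution variety would be forced to meet the hyperplane at infinity, producing a nontrivial homogeneous solution. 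Your approach instead attacks the inhomogeneous system directly: you collapse coinciding coordinates on a putative positive-dimensional component $V$, use the subset-sum hypothesis to guarantee the aggregated weights $M_k$ are nonzero, and then invoke the Jacobian criterion, computing $\det D\psi = s!\,\prod_k M_k \prod_{k<l}(y_l - y_k)$. Both arguments ultimately hinge on a Vandermonde determinant, but the paper places it on the coefficient side (solving for the $m$'s), whereas you place it on the Jacobian side. Your reduction to distinct coordinates via the equivalence relation and the paper's inductive merging of coinciding $x_i$'s play structurally parallel roles.

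The one substantive difference in scope is the ground field. Your Jacobian determinant carries the prefactor $s!$, so it vanishes identically once $0 < \char L \le s$, and the \'etaleness argument yields nothing; by contrast, the paper's homogeneous determinant $\prod_i x_i\prod_{i<j}(x_i-x_j)$ has no such integer factor and the passage to the inhomogeneous case is by projective compactness alone, so the paper's proof is valid over every field. You flag the issue, but your phrasing slightly misstates the relevant regime: in the proof of Theorem~\ref{primcount} the proposition is invoked over $L=\overline{\F_p}$ with $p > n \ge r$, i.e., \emph{large positive} characteristic rather than characteristic zero (where your proof still works, since $p > r \ge s$). Nonetheless Proposition~\ref{genvan} as stated places no hypothesis whatever on $\char L$, so your argument proves a strictly weaker result than the paper does. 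To recover the full statement by your route one would need an additional device for the case $\char L \le r$; the paper's linear-algebra-plus-compactness argument sidesteps this cleanly, which is arguably its main advantage over the Jacobian approach.
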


\begin{proof}
We first prove that if $c_1=\cdots=c_r=0$, then $x_1=\cdots=x_r=0$ is the only solution to the system of equations (\ref{psystem}).  We proceed by induction, the claim being trivial when $r=1$. For general $r$, we view (\ref{psystem}) as a system of linear equations in $m_1,\ldots,m_r$. The determinant of the resulting coefficient matrix, by the evaluation of a Vandermonde determinant, is equal to $D=\prod_i x_i\prod_{i<j} (x_i-x_j)$.  If $D\neq 0$, then the only solution to the linear system (\ref{psystem}) is $m_1=\cdots=m_r=0$, contradicting the assumption that no nonempty subset of $\{m_1,\ldots,m_r\}$ sums to zero. Hence if (\ref{psystem}) has a solution for $x_1,\ldots,x_r$, then some $x_i$ or some $x_i-x_j$ must vanish.  If some $x_i$, say $x_r$, vanishes, then the first $r-1$ equations in (\ref{psystem}) in the variables $x_1,\ldots,x_{r-1}$ satisfy the induction hypothesis for $r-1$, since no nonempty subset of of $\{m_1,\ldots,m_{r-1}\}$ will sum to zero either; hence, in this case, $x_1=\cdots=x_{r-1}=0$ (implying $x_r=0$) is the only solution to (\ref{psystem}). Similarly, if some $x_i-x_j$, say $x_{r-1}-x_r$, vanishes, then again 
 the first $r-1$ equations in (\ref{psystem}) (viewed as equations in the variables $x_1,\ldots,x_{r-1}$) satisfy the induction hypothesis for $r-1$, since again no nonempty subset of the corresponding constants $m_1,\ldots,m_{r-1}+m_r$ will sum to zero.  We conclude that  $x_1=\cdots=x_r=0$ is the only solution to the system of equations (\ref{psystem}) if $c_1=\cdots=c_r=0$ and no nonempty subset  of $\{m_1,\ldots,m_r\}$ sums to zero.
 
We consider next the case where $c_1,\ldots,c_r\in L$ are general constants. Suppose that~(\ref{psystem}) has infinitely many solutions for $x_1,\ldots,x_r$ over $L$, or even over an algebraic closure $\overline{L}$ of~$L$. This implies that the $r$ homogeneous expressions $h_j :=\sum_i m_i x_i^j- c_j x_{r+1}^j$ ($j=1,\ldots,r$) in $x_1,\ldots,x_{r+1}$ define a projective variety of dimension at least 1 in $\P^r$. The intersection of this projective variety of positive dimension with the hyperplane $x_{r+1}=0$ thus contains at least one point in $\P^{r-1}(\overline{L})$, implying that the system (\ref{psystem}) has a solution in $\P^{r-1}(\overline{L})$ when $c_1=\cdots=c_r=0$, a contradiction. 
We conclude that, for any choice of $c_1,\ldots,c_r$, the system~(\ref{psystem}) has finitely many solutions for $x_1,\ldots,x_r$.

In particular, when not all the $c_i$ are zero, the $h_i$ define a zero-dimensional complete intersection in $\P^{r}$; by Bezout's Theorem, the number of solutions for $x_1,\ldots,x_r$ in this case is thus at most $r!$. 
\end{proof}

\vspace{-.15in}

\begin{remark}\label{irremark}
{\em Let $L$ be an algebraically-closed field such that $\char(L)\nmid n!$. Then the irreducible components of the variety of monic polynomials of degree~$n$ having index~$\geq k$ over~$L$ are given by the partitions $(m_1,\ldots,m_{n-k})$ of $n$ (where, as usual, $m_1\geq m_2\geq \cdots \geq m_{n-k}$).  Indeed, given such a partition, we have a morphism from $\mathbb{A}^{n-k}$ onto a component of the variety of monic polynomials of index~$k$, given by $(r_1,\ldots,r_{n-k})\mapsto (x-r_1)^{m_1}\cdots(x-r_{n-k})^{m_{n-k}}$. The image of this morphism is irreducible, being a surjective image of affine space. 

We claim that the image of $\A^{n-k}$ in the space of monic polynomials of degree $n$ under this morphism does not lie in any hyperplane.  Indeed, suppose $f(x)$ is in the image. Then so is $f(t(x+c))$ for any $c,t\in L$.  Choose $c\in L$ so that all coefficients of $f(x+c)$ are nonzero; this condition on $c$ is equivalent to the condition that $c$ is not a root of any of the nonzero polynomials $f(x)$, $f'(x)$, $\ldots\,$, $f^{(n-1)}(x)$ of degrees $n$, $n-1$, $\ldots$\,, $1$, respectively. 
Now let $t$ vary in $L$; then the set of polynomials $f(t(x+c))$ defines a rational normal curve in $\A^n$, and hence does not lie in any hyperplane. 

The irreducible components of the variety of binary $n$-ic forms over $L$ having index~$k$ is described in the same way, but where each irreducible component is now the image of a product of projective lines instead of affine lines, since roots of binary $n$-ic forms lie in $\P^1$ rather than $\A^1$.  Each of these irreducible components again does not lie in any hyperplane, for essentially the same reason: given an $f$ in one of these components, we choose a $\GL_2(L)$-transformation of $f$ to make its leading coefficient $1$, and then dehomogenize to reduce to the monic case. }
\end{remark}

{\noindent {\bf Proof of Proposition~\ref{indexcodim}:}}
Suppose $P(x)=x^n+a_1x^{n-1}+\cdots+a_n$ is a polynomial over~$K$ of index $k$ having distinct roots $x_1,\ldots,x_r$, over an algebraic closure $\overline{K}$ of $K$, with respective multiplicities $m_1,\ldots,m_r$. Then the condition that $P$ has index $k$ is~equivalent to the equality $r=n-k$, and the number of possibilities for the $r$-tuple $(m_1,\ldots,m_r)$ of multiplicities with $m_1\geq m_2\geq \cdots \geq m_r$ is the number of partitions of $n$ into $n-k$ parts or, equivalently, the number $q(k,n-k)$ of partitions of $k$ into at most $n-k$ parts.  

Fix such a choice $(m_1,\ldots,m_r)$. Specifying the coefficients $a_1,\ldots,a_{r}$  of the polynomial $P(x)=x^n+a_1x^{n-1}+\cdots+a_n$ is~equivalent to specifying the first $r$ elementary symmetric functions of the $n$ roots of $P(x)$ in $\overline{K}$. By the Girard--Newton identities, this in turn is equivalent to specifying the first $r$ power sums $c_1,\ldots,c_{r}$ of the $n$ roots of~$P(x)$ in $\overline{K}$, where $c_1,\ldots,c_r$ are certain fixed integer polynomials in $a_1,\ldots,a_r$. Thus, if $a_1,\ldots,a_r\in K$ (equivalently, $c_1,\ldots,c_r\in K$) are specified, and $P(x)$ has distinct roots $x_1,\ldots,x_r$ over~$\overline{K}$ with~respective multiplicities $m_1,\ldots,m_r$, then $x_1,\ldots,x_r$ must satisfy the equations given in~(\ref{psystem}). No nonempty subset of $\{m_1,\ldots,m_r\}$ sums to zero in $K$ since the integers $m_i$ are positive and sum to $n$, and $\char(K)\nmid n!$. By Proposition~\ref{genvan}, we conclude that, if $(m_1,\ldots,m_r)$ and $a_1,\ldots,a_r$ are fixed, then there are at most~$r!$ possibilities for $x_1,\ldots,x_r$~in $\overline{K}$, and hence at most $r!$ possibilities for $a_{r+1},\ldots,a_n\in K$. 

Therefore, if the values of $a_1,\ldots,a_{n-k}\in K$ are fixed, then there are  at~most \linebreak $q(k,n-k)\cdot(n-k)!$ values of $a_{n-k+1},\ldots,a_n\in K$ such that $P(x)$ has index~$k$ over~$K$.~{$\Box$ \vspace{2 ex}}

To prove Theorem~\ref{primcount}, we also make use of the following lemma relating the discriminant of an integer polynomial to its index over $\F_p$.

\begin{lemma}\label{discindex}
Let $g(x)\in\Z[x]$ be a polynomial such that the $\Q$-algebra $\Q[x]/(g(x))$ has nonzero discriminant $D$. Then for each prime~$p>n$, the index of $g(x)$ over $\F_p$ is at least $v_p(D)$.
\end{lemma}

\begin{proof}
Let $p>n$ be a prime. We work locally over $\Q_p$ and $\Z_p$. For a separable polynomial $h$ over $\Q_p$, let $L_h$ denote the \'etale $\Q_p$-algebra $\Q_p[x]/(h(x))$. If $h(x)=h_1(x)h_2(x)$ for some polynomials $h,h_1,h_2$ over $\Z_p$, then the index of $h$ over $\F_p$ is at least the sum of the indices of $h_1$ and $h_2$ over $\F_p$. Meanwhile, $v_p(\Disc(L_h))=v_p(\Disc(L_{h_1}))+v_p(\Disc(L_{h_2}))$. Thus, to prove the lemma, we may reduce to the case where the polynomial $g(x)\in\Z_p[x]$ is irreducible over $\Z_p$, i.e., $L_g$ is a field extension of $\Q_p$. 

Since $p>n$, the field extension $L_g$ of $\Q_p$ is tamely ramified, and so $v_p(D)=ef$ where~$e$ is the ramification index and~$f$ is the residue field degree of $L_g$. We wish to show that the index of $g$ over $\F_p$ is at least $ef$. By the definition of index, the index of $g$ over $\F_p$ is equal to the index of $g$ over~$\F_q$ for any extension $\F_q$ of $\F_p$. We thus bound instead the index of $g$ over $\F_q$ from below, where $q=p^f$.  

Let $\Q_q$ denote the unramified extension of $\Q_p$ of degree $f$. Then $L_g\cong \Q_q[\pi]$, where $\pi$ is a uniformizer of $L_g$ and $(p)=(\pi^e)$ as ideals of the valuation ring of $L_g$. Therefore, $\Q_q[x]/(g(x))\cong L_g\otimes_{\Q_p} \Q_q\cong \prod_{i=1}^f \Q_q[\pi]$ as $\Q_q$-algebras. Hence, over $\Q_q$, we have a factorization $g=\prod_{i=1}^f g_i$, where $\Q_q[x]/(g_i)\cong L_g$ for each $i$. 

Let $\alpha$ denote the image of $x$ in $L_g\cong \Q_q[x]/(g_i)$. Let $c\in\Q_q$ be an element such that $\alpha- c$ has positive valuation. Then $g_i(x-c)$ is the minimal polynomial over $\Q_q$ of an element of positive valuation in $L_g$, and so $g_i(x-c)\equiv x^{e}$ (mod $p$) as polynomials over $\F_q$, i.e., $g_i$ has index $e$ over $\F_q$. As this holds for every $i$, we conclude that $g=\prod_{i=1}^f g_i$ has index at least $ef$ over $\F_q$ and thus over $\F_p$, as desired. 
\end{proof}

{\noindent {\bf Proof of Theorem~\ref{primcount}:}}
Theorem~\ref{primcount} follows from Theorem~\ref{slot} when $\ind(G)=1$, and so we may assume that $\ind(G)\geq 2$.  

Now suppose that $K$ is a $G$-number field of degree $n$ with ring of integers~$\O_K$ and absolute discriminant $D$. Let $\alpha\in \O_K-\Z$ be the element of trace zero such that $||\alpha||$, the square root of the sum of the squares of the absolute values of the archimedean embeddings of $\alpha$, is minimal. Then, as in~\cite{Schmidt}, we have $||\alpha||\ll D^{1/(2n-2)}$. The minimal polynomial $P(x)=x^n+a_2x^{n-2}+\cdots+a_n$ of $\alpha$ thus satisfies $|a_i|\ll D^{i/(2n-2)}$.

For every $p\mid D$ and $p>n$, the index $k_p$ of $P(x)$ over $\F_p$ is at least~$v_p(D)$ by Lemma~\ref{discindex}, 
and $v_p(D)$ in turn is at least $k:=\ind(G)$ by Proposition~\ref{valindcon}. 
For any such $p$, by Proposition~\ref{indexcodim}, the values of $a_{n-v_p(D)+1},\ldots,a_{n}$ modulo~$p$ are determined up to a bounded number $C$ of possibilities by $a_1,\ldots,a_{n-v_p(D)}$.  Thus $a_{n-j+1}$ is determined modulo $D_j:=\prod_{p:v_p(D)\geq j}p$ up to at most $C^{\omega(D_j)}=O_\epsilon(D^\epsilon)$ possibilities by the Chinese Remainder Theorem.  By definition, we have that $D_j=O(D^{1/j})$, while the range of $a_{n-j+1}$ is $\ll D^{(n-j+1)/(2n-2)}$. 

Now, we observe that the inequality $1/j\leq (n-j+1)/(2n-2)$ holds for all $j\in [2,n-1]$, with equality when $j=2$ or $j=n-1$, as the latter inequality is equivalent to $(j-2)(j-n+1)\leq 0$. Since $\ind(G)\geq 2$, it follows that $D_1=D_2\ll D^{n/(2n-2)}$ as well. Therefore, the inequality $$D_j\ll D^{(n-j+1)/(2n-2)}$$ holds for all $j\in\{1,\ldots,n-1\}$.

It follows that if $a_2,\ldots,a_{n-j}$ are fixed, then there are $O(C^{\omega(D_j)}D^{(n-j+1)/(2n-2)}/D_j)$ 
possibilities for $a_{n-j+1}$. Hence the total number of possible minimal polynomials for $\alpha$ is 
$$\ll \prod_{i=2}^n\frac{C^{\omega(D)}D^{\frac{i}{2n-2}}}{D_{n+1-i}}=O_\epsilon\!\left(\frac{D^{\frac{n+2}4+\epsilon}}{D}\right)=O_\epsilon({D^{\frac{n+2}4-1+\epsilon}});$$
this therefore represents an upper bound on the total number of $G$-number fields $K$ having degree~$n$ and given discriminant $D$. 

Summing this bound over all $O(X^{1/k})$ discriminants $D$ less than $X$ that are $k$-powerful now yields the stated bound on the total number of $G$-number fields~$K$ of degree~$n$ having absolute discriminant $D<X$. {$\Box$ \vspace{2 ex}}

We turn next specifically to {\bf non-elemental} primitive permutation groups, where we obtain significantly stronger bounds than those in Theorem~\ref{slot} and Theorem~\ref{primcount}. 

\begin{theorem}\label{nonecount}
Let $G\subset S_m\wr S_r$ be a non-elemental primitive permutation group of type $(r,m,k)$ acting on $n={m\choose k}^r$ letters. Then 
$$F_n(G,X)=O(X^{brm\log^2(rm)/n}
)=O(X^{b\log^2\!n/\!\sqrt{n}})$$ for some constant $b>0$. 
\end{theorem}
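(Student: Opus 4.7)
\noindent\textbf{Proof plan for Theorem~\ref{nonecount}.} The plan is to reduce the count of degree-$n$ $G$-fields to a count of degree-$rm$ $G'$-fields (of a much smaller discriminant) via the resolvent construction, and then invoke the Lemke Oliver--Thorne bound from Theorem~\ref{slot}. The starting observation is Corollary~\ref{nonenumfield}: if $K$ is a $G$-field of degree $n$ with absolute discriminant at most $X$, and $K'$ is the $G'$-resolvent of $K$ (a degree-$rm$ subfield of the Galois closure of $K$ on which the underlying group of $G$ acts as $G'\subset S_{rm}$), then
\begin{equation*}
|\Disc(K')|\ll_n |\Disc(K)|^{3rm/n}\ll_n X^{3rm/n}.
\end{equation*}

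The first step is to show that the map $K\mapsto K'$ is bounded-to-one, with multiplicity depending only on $n$. Indeed, both $K$ and $K'$ generate the same Galois closure $L$, since $G$ and $G'$ are two faithful permutation representations of the same abstract group; thus a given $K'$ determines $L$, and then $K$ is recovered as the fixed field of a point stabilizer of $G$ acting on $L$, of which there are at most $[G:N_G(H)]\leq |G|$ choices (where $H$ is a point stabilizer of $G$). So the number of $G$-fields $K$ of degree $n$ with $|\Disc(K)|\leq X$ is bounded by $O_n(1)$ times the number of $G'$-fields $K'$ of degree $rm$ with $|\Disc(K')|\ll_n X^{3rm/n}$.

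The second step is to apply Theorem~\ref{slot} (the Lemke Oliver--Thorne bound) to the $G'$-field count: there is an absolute constant $c>0$ such that
\begin{equation*}
F_{rm}(G',\,X^{3rm/n})\ll \bigl(X^{3rm/n}\bigr)^{c\log^2(rm)}=X^{3crm\log^2(rm)/n}.
\end{equation*}
Combining this with the bounded multiplicity from the first step and setting $b:=3c$ (absorbing the $O_n(1)$ multiplicity into the constant, which is allowed since for small $n$ the bound is trivial), we obtain the first stated estimate $F_n(G,X)=O(X^{brm\log^2(rm)/n})$.

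For the second estimate, it remains to observe that the exponent $rm\log^2(rm)/n$ is $O(\log^2 n/\sqrt{n})$. This follows from the fact that $n=\binom{m}{k}^r\gg (rm)^2$ for non-elemental $(r,m,k)$ (since at least one of $r,k$ is $\geq 2$; this is essentially the content of Corollary~\ref{nonebound2}), giving $rm/n\ll 1/\sqrt{n}$, together with the obvious bound $\log(rm)\ll \log n$. The main obstacle in the argument is the multiplicity bound in the first step, but this is a soft group-theoretic observation; the key analytic content is entirely packaged in Theorem~\ref{slot} combined with the exponential discriminant savings provided by Corollary~\ref{nonenumfield}.
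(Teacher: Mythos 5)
Your proof is correct and follows essentially the same route as the paper: reduce to counting degree-$rm$ $G'$-fields via the resolvent and the discriminant comparison of Corollary~\ref{nonenumfield}, apply the Lemke Oliver--Thorne bound from Theorem~\ref{slot}, and convert the exponent using $n = \binom{m}{k}^r \gg (rm)^2$ as in Corollary~\ref{nonebound2}. The only cosmetic difference is the justification of the bounded-to-one multiplicity (you count conjugates of a point stabilizer in $\Gal(L/\Q)$, while the paper counts $G'$-invariant partitions of $rm$ letters), and that the paper phrases the correspondence as $K' \mapsto K$ rather than $K \mapsto K'$; both yield the same $O_n(1)$ fiber bound.
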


\begin{proof}
Let $G'$ denote the associated imprimitive permutation group, with the same underlying group as $G$ but acting on $rm$ letters. By Theorem~\ref{slot}, the number of degree~$rm$ extensions~$K'$ with associated Galois group $G'$ and absolute discriminant at most $Y$ is $O_n(Y^{c\log^2(rm)})$ for an absolute constant $c>0$.  In the Galois closure of~$K'$ is an extension $K$ of degree~$n={m\choose k}^r$ having associated Galois group $G$ (i.e., $K$ is the $G$-resolvent of~$K'$).  By Corollary~\ref{nonenumfield}, we have $|\Disc(K')|\ll_n |\Disc(K)|^{3rm/n
}.$
Since every $K$ arises from a~$K'$, and the number of isomorphism classes of $K$ that can arise from an isomorphism class of $K'$ in this way is clearly bounded in terms of $n$ (e.g., by the number of distinct partitions of $rm$ letters that are preserved by $G'$), we see that $F_n(G,X)\ll_n F_{rm}(G',X^{3rm/n})$.  By~Schmidt's bound as in the first part of Theorem~\ref{slot}, we thus obtain
\begin{equation}\label{nonecount2}
F_n(G,X)\ll_n F_{rm}(G',X^{3rm/n})=O((X^{3rm/n})^{(rm+2)/4}) = O(X^{3rm(rm+2)/(4n)})=O(X^{4}),  
\end{equation}
which serves as a useful estimate for smaller values of $n$; for general large $n$, using Lemke Oliver and Thorne's bound as in the second part of Theorem~\ref{slot}, we obtain
$$F_n(G,X)\ll_n F_{rm}(G',X^{3rm/n})=O((X^{3rm/n})^{b\log^2(rm)}),$$
which in turn is $O(X^{b\log^2\!n/\!\sqrt{n}})$ 
by Corollary~\ref{nonebound2}, for some constant $b>0$. 
\end{proof}

\vspace{-.15in}

\section{Equidistribution modulo $C$ of polynomials cutting out field extensions ramified at primes dividing~$C$}\label{Fourier}

A common ingredient in proving that sets are equidistributed modulo a prime $p$ is Fourier analysis over {$\F_p$} (see, e.g., Taniguchi--Thorne~\cite{TT} for a nice introduction to this topic where, in particular,  the case of binary cubic forms is treated). Here, we apply Fourier analysis to show that integral binary forms (resp.\ monic polynomials) of degree $n$ 
that have index at least a given integer $k$ over $\F_p$ are well distributed  in boxes.  More generally, for a squarefree positive integer $q$, we show that integral binary forms (resp.\ monic polynomials) of degree~$n$, that have indices at least a given integer $k_p$ over $\F_p$ for each prime $p\mid q$, are also well distributed in boxes. 

\subsection{Binary forms}

For a ring $R$, let $V_R$ denote the $R$-module of rank $n+1$ consisting of all binary $n$-ic forms having coefficients in $R$.  Let $q$ be any positive integer.  For a function $\Psi_q: V_{\Z/q\Z} \rightarrow \C$, let \smash{$\widehat{\Psi_q}\colon V^\ast_{\Z/q\Z} \rightarrow \C$} be its Fourier transform defined by the usual formula
\begin{equation}\label{fformula}
{\widehat{\Psi_q}(g) = \frac1{q^{n+1}}
\sum_{f \in V_{\Z/q\Z}} \Psi_q(f) \exp\left(\frac{2 \pi i [f, g]}{q}\right),}
\end{equation}
where $[f,g]$ denotes the canonical bilinear form \smash{$V^\ast_{\Z/q\Z}\times V_{\Z/q\Z}\to\Z/q\Z$}. 
When $\Psi_q$ is the characteristic function of a set {$S\subset V_{\Z/q\Z}$}, then the principle is that upper bounds on the maximum $M(\Psi_q)$ of \smash{$|\widehat{\Psi}_q(g)|$} over all nonzero $g\in V^\ast_{\Z/q\Z}$ yields a measure of 
{equidistribution} of {$S$} in a box \smash{$[-H,H]^{n+1}$} of integer-coefficient binary $n$-ic forms.  The reason is that, for any Schwartz function {$\phi$} approximating the characteristic function of the box $[-1,1]^{n+1}$, the twisted Poisson summation formula states
\begin{equation}\label{twisted}
\sum_{f \in V_\Z} \Psi_q(f) \phi(f/H)  = 
H^{n+1} \sum_{g \in V^*_\Z} \widehat{\Psi_q}(g) \widehat{\phi}\left(\frac{g H}{q} \right).
\end{equation}
For suitable {$\phi$}, the left side will be an upper bound for the number of 
elements in $S$ in the box $[-H,H]^{n+1}$. The {$g = 0$} term is the expected main term; meanwhile,
the rapid decay of \smash{$\widehat{\phi}$} implies that the error term is effectively bounded by $H^{n+1}$ times the sum of \smash{$|\widehat{\Psi_q}(g)|$} 
over all $0\neq g\in V^*_\Z$ whose coordinates are of size at most $O_\epsilon(q^{1+\epsilon}/H)$, and this in turn can be bounded by $O(H^{n+1}(q^{1+\epsilon}/H)^{n+1}M(\Psi_q))=O(q^{n+1+\epsilon}M(\Psi_q))$.

Let us now consider the factorization behavior of binary forms modulo a prime $p$. If a binary $n$-ic form $f$ (over $\Z$, or over $\F_p$) factors modulo $p$ as $\prod_{i=1}^r P_i^{e_i}$, with $P_i$ irreducible and $\deg(P_i)=f_i$, then the {\it splitting type} $(f,p)$ of $f$ is defined as $(f_1^{e_1}\cdots f_r^{e_r})$, and the {\it index} $\ind(f)$ of $f$ modulo~$p$ (or the {\it index} of the splitting type $(f,p)$ of $f$) is defined to be $\sum_{i=1}^r (e_i-1)f_i$. 

More abstractly, we call any expression $\sigma$ of the form $(f_1^{e_1}\cdots f_r^{e_r})$ a {\it splitting type}; the {\it degree} $\deg(\sigma)$ is $\sum_{i=1}^r e_i f_i$, and the {\it index} $\ind(\sigma)$ is $\sum_{i=1}^r (e_i-1)f_i$. The {\it length} $\len(\sigma)$ is $\deg(\sigma)-\ind(\sigma)=\sum_{i=1}^r f_i$. Finally, $\#\Aut(\sigma)$ is defined to be $\prod_i f_i$ times the number of permutations of the factors
$f_i^{e_i}$ that preserve $\sigma$,\footnote{See~\cite[\S2]{mass} for the motivation for this definition.} e.g., $\#\Aut(112^52^52^53^{11})=24\cdot 2\cdot6=288.$ 

  In this section, we first show that the set of binary $n$-ic forms over $\F_p$ having given index---and more generally, having splitting type containing a given splitting type $\sigma$---are very well distributed in $[-H,H]^{n+1}$ for suitable values of $H<p$. We will accomplish this, as described above, via demonstrating cancellation in the Fourier transforms of the corresponding characteristic functions.

\begin{proposition}\label{ftgen}
Let $\sigma=(f_1^{e_1}\cdots f_r^{e_r})$ 
be a splitting type with $\deg(\sigma)=d$, $\ind(\sigma)=k$, and $\len(\sigma)=\ell$. 
Let $w_{p,\sigma}:V_{\F_p}\to\C$ be defined by
\begin{align*}
w_{p,\sigma}(f):=&\text{ the number of $r$-tuples $(P_1,\ldots,P_r)$, up to the action of the group of}  \\[-.04in] & \text{  permutations of $\{1,\ldots,r\}$ preserving $\sigma$, such that the $P_i$ are distinct  } \\[-.04in] &\text{ irreducible binary forms where, for each $i$, we have $P_i(x,y)$ is $y$ or is  } \\[-.04in] &\text{ monic as a polynomial in $x$, $\deg P_i=f_i$, and $P_1^{e_1}\cdots P_r^{e_r}\mid f$}.
\end{align*}
Then 
\begin{equation*}
\widehat{w_{p,\sigma}}(g)=
\begin{cases}
	{\displaystyle\frac{p^{-k}}{\small\Aut(\sigma)}\!+\!O(p^{-(k+1)})}	& \!\text{if } {g=0};\\[.1in]
	{O(p^{-(k+1)})}		& \!\text{if $g \neq 0$}.\\
\end{cases}
\end{equation*}
\end{proposition}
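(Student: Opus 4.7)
The plan is to unfold the definition of $w_{p,\sigma}$ and exploit the fact that the indicator function of a linear subspace has a transparent Fourier transform. Writing $Q = P_1^{e_1}\cdots P_r^{e_r}$ (so $d = \deg Q$) and swapping the sum over $f$ with the sum over tuples, the inner sum runs over the $\F_p$-subspace $W_Q := \{f \in V_{\F_p} : Q \mid f\}$, which has dimension $n - d + 1$; character orthogonality collapses it to $p^{n-d+1} \cdot \mathbf{1}[g \in W_Q^\perp]$. After cancelling against the prefactor $1/p^{n+1}$, this yields
\begin{equation*}
\widehat{w_{p,\sigma}}(g) \,=\, \frac{p^{-d}}{|S|}\, N(g), \qquad N(g) \,:=\, \#\bigl\{(P_1,\ldots,P_r) \text{ admissible} : g \in W_Q^\perp\bigr\},
\end{equation*}
where $S$ denotes the group of permutations of $\{1,\ldots,r\}$ preserving $\sigma$.

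For $g = 0$ the orthogonality condition is automatic, so $N(0)$ counts admissible ordered tuples of distinct monic-or-$y$ irreducible binary forms of degrees $f_1, \ldots, f_r$. The prime polynomial theorem gives $p^f/f + O(p^{f-1})$ such forms of each degree $f$, and after accounting for distinctness among repeated degrees I obtain $N(0) = p^\ell/\prod f_i + O(p^{\ell - 1})$. Combining with $k = d - \ell$ and the identity $\#\Aut(\sigma) = |S|\prod f_i$ yields the stated main term $p^{-k}/\#\Aut(\sigma) + O(p^{-k-1})$.

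For $g \neq 0$, the task reduces to showing $N(g) = O(p^{\ell - 1})$. With respect to the natural pairing, the condition $g \in W_Q^\perp$ amounts to the system of $n-d+1$ linear conditions $L_k(Q) := [x^k Q, g] = \sum_j Q_j g_{j+k} = 0$ on the coefficients of $Q$, for $k = 0, \ldots, n-d$. Since $g$ has some non-zero coefficient, at least one $L_k$ is a non-zero linear form on the $(d+1)$-dimensional space of binary $d$-ic forms, so the admissible $Q$'s in question are $\F_p$-points of the intersection of the variety of binary $d$-ic forms of multiplicity type $\sigma$ with a proper hyperplane. By Remark~\ref{irremark}, each irreducible component of this variety---the image of a product of projective lines under the multiplication map---does not lie in any hyperplane, so the intersection is a proper subvariety of dimension at most $\ell - 1$. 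A standard $\F_p$-point bound (Schwartz--Zippel or Lang--Weil) then gives $N(g) = O(p^{\ell - 1})$, and hence $|\widehat{w_{p,\sigma}}(g)| = O(p^{-k-1})$.

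The Fourier calculation itself is routine; the real content lies in the bound for $N(g)$ when $g \neq 0$, which rests crucially on the non-degeneracy assertion of Remark~\ref{irremark}---that each relevant component of the variety of $Q$'s of type $\sigma$ linearly spans the ambient space of binary $d$-ic forms. Once this geometric input is available, the rest of the argument is bookkeeping involving standard counts of irreducible polynomials over finite fields.
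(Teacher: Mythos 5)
Your proof is correct and tracks the paper's proof closely in its overall architecture: both unfold the Fourier transform over tuples $(P_1,\ldots,P_r)$, observe that the inner sum vanishes unless $g$ annihilates the subspace of multiples of $Q$, treat $g=0$ by counting tuples via the prime polynomial theorem, and for $g\neq 0$ appeal to the non-degeneracy assertion in Remark~\ref{irremark}. The point of genuine divergence is the key bound $N(g)=O(p^{\ell-1})$ for $g\neq 0$. You translate the orthogonality condition $g\in W_Q^\perp$ directly into a nontrivial linear constraint on the coefficients of $Q$, and then intersect the $\ell$-dimensional type-$\sigma$ variety in the $(d+1)$-dimensional space of degree-$d$ forms with the resulting hyperplane, invoking Remark~\ref{irremark} in degree $d$. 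The paper instead works in the ambient space of degree-$n$ forms: it notes that $U_Q\subset\Ann(g)$ forces every element of the $p^{n+1-d}$-point set $U_Q$ to be an index-$\geq k$ form lying in the hyperplane $\Ann(g)$, bounds those forms by $O(p^{n-k})$ via Remark~\ref{irremark} in degree $n$, and then divides by $|U_Q|$ using a bounded-fiber observation to recover $O(p^{\ell-1})$ choices of $Q$. Both arguments rest on the same geometric fact and give the same exponent; yours is slightly more direct (no divide-and-count step and no need for the ``each $f$ lies in $O_n(1)$ subspaces $U_Q$'' lemma), whereas the paper's keeps everything in the degree-$n$ space it is already working in. The only point worth making explicit in your write-up is why the linear constraints define a proper, rather than trivial, linear section: you need to observe that if every $L_k$ vanished identically then $g$ would annihilate all of $V_{\F_p}$, forcing $g=0$. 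You assert this implicitly (``since $g$ has some non-zero coefficient, at least one $L_k$ is a non-zero linear form''), and it is correct, but deserves a sentence.
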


\begin{proof}
We have
\begin{eqnarray}\displaystyle
\widehat{w_{p,\sigma}}(g)&=&\displaystyle\frac{1}{p^{n+1}}\sum_{f\in V_{\F_p}} e^{2\pi i[f,g]/p} w_{p,\sigma}(f)
\label{firstexp}\\
&=&\displaystyle\frac{1}{p^{n+1}}\sum_{P_1,\ldots,P_r}
\:\sum_{f\,:\,P_1^{e_1}\cdots P_r^{e_r} \mid f}
e^{2\pi i[f,g]/p}. \label{secondexp}
\end{eqnarray}

Evaluating (\ref{secondexp}) for $g=0$ gives 
$\widehat{w_{p,\sigma}}(0)=(p^{-k}/\#\Aut(\sigma))+O(p^{-(k+1)})$ 
because the number of possibilities for $P_1,\ldots,P_r$ is 
$(1/\#\Aut(\sigma))p^{\ell}+O(p^{{\ell-1}})$, and the set of $f$ such that $P_1^{e_1}\cdots P_r^{e_r}$ divides $f$ is a linear subspace $U\subset V_{\F_p}$ of codimension $d=\deg(P_1^{e_1}\cdots P_r^{e_r})$; hence the inner sum has $p^{n+1-d}$ terms, and therefore 
$$\widehat{w_{p,\sigma}}(0)=\frac{1}{p^{n+1}}\left(\frac{p^{\ell}}{\#\Aut(\sigma))}+O(p^{{\ell-1}})\right)p^{n+1-d} = {\displaystyle\frac{p^{-k}}{\small\Aut(\sigma)}\!+\!O(p^{-(k+1)})}.	$$

Next, assume $g\neq 0$. 
We consider each of the $(1/\#\Aut(\sigma))p^{\ell}+O(p^{{\ell-1}})$  terms in the outer sum separately. 
We thus fix $P_1,\ldots,P_r$.

Let $U:=U_{P_1^{e_1},\ldots,P_r^{e_r}}\subset V_{\F_p}$ denote the codimension $d$ linear subspace of all $f\in V_{\F_p}$ such that  $P_1^{e_1}\cdots P_r^{e_r}$ divides $f$. 
The set of $g\in V^*_{\F_p}$ such that $[f,g]=0$ for all $f\in U$ is thus a linear subspace $W:=W_{P_1^{e_1},\ldots,P_r^{e_r}}\subset V^*_{\F_p}$ of dimension $d$. 
If $g\in W$, the contribution of the inner sum for this fixed choice of $P_1,\ldots,P_r$ is thus $p^{n+1-d}$. Meanwhile, if $g\notin W$, then $\{f\in U: [f,g]=0\}$ is codimension~1 subspace of~$U$, yielding a union of $p^{n-d}$ complete exponential sums modulo $p$, and so the contribution of the inner sum for such $g\notin W$ is~$0$. 

We claim that the number of choices of $P_1,\ldots,P_r$ for which $g\in W_{P_1^{e_1},\ldots,P_r^{e_r}}$ is~$O(p^{\ell-1})$.  Indeed, the subspace $U_{P_1^{e_1},\ldots,P_r^{e_r}}\subset V_{\F_p}^\ast$ has cardinality $p^{n+1-d}$ and determines $P_1^{e_1}\cdots P_r^{e_r}$.  The hyperplane $\Ann(g)\subset V_{\F_p}$ annihilated by $g$ has dimension~$n$; hence the number of polynomials having index at least $k$ in $\Ann(g)$ is $O(p^{n-k})$, since when $p>n$ none of the irreducible components of the variety in $V$ of codimension $\geq k$, consisting of polynomials having index~$\geq k$, lie in any hyperplane in~$V$ (see Remark~\ref{irremark}). Because an element of $V_{\F_p}$ can lie in at most $O_n(1)$ $U_{P_1^{e_1},\ldots,P_r^{e_r}}$'s, it follows that the maximum number of possibilities for $P_1^{e_1},\ldots,P_r^{e_r}$, such that $g$ annihilates all multiples of $P_1^{e_1}\cdots P_r^{e_r}$ in $V_{\F_p}$, is $O(p^{n-k}/p^{n+1-d})=O(p^{\ell-1})$.

We conclude that
$$\widehat{w_{p,\sigma}}(g)=\frac1{p^{n+1}}\cdot p^{n+1-d}\cdot O(p^{\ell-1}) = O(p^{-(k+1)}),	$$
as desired.
\end{proof}


\begin{corollary}\label{index2forms}
Let $p$ be a prime.  The number of integer-coefficient binary $n$-ic forms 
in $[-H,H]^{n+1}$ 
that, modulo $p$, have index at least $k$ is at most $O_\epsilon(H^{n+1}/p^k+p^{n-k+\epsilon})$. 
\end{corollary}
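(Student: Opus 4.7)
The plan is to apply the twisted Poisson summation formula~(\ref{twisted}) with $q = p$, using Proposition~\ref{ftgen} to bound the relevant Fourier coefficients. First I observe that the set of $f \in V_{\F_p}$ of index at least $k$ decomposes as a disjoint union, over the $O_n(1)$ splitting types $\tau$ of degree $n$ with $\ind(\tau) \geq k$, of the forms having splitting type exactly $\tau$. For any such $\tau$, the function $w_{p,\tau}$ from Proposition~\ref{ftgen} is precisely the indicator of splitting type $\tau$: the divisibility $P_1^{e_1}\cdots P_r^{e_r}\mid f$ together with $\deg(\tau)=n$ forces the product to equal $f$ up to a leading scalar, and the $P_i$ are then uniquely determined from the factorization of $f$ (with $y$ serving as the irreducible factor arising from a possibly vanishing leading coefficient).

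Fix such a $\tau$ and a nonnegative Schwartz function $\phi$ on $V_\R$ dominating the indicator of $[-1,1]^{n+1}$. Applying~(\ref{twisted}) to $\Psi_p = w_{p,\tau}$ bounds the number of $f \in [-H,H]^{n+1}\cap V_\Z$ with splitting type $\tau$ by
\begin{equation*}
H^{n+1}\sum_{g \in V^*_\Z} \widehat{w_{p,\tau}}(g)\,\widehat{\phi}\bigl(gH/p\bigr).
\end{equation*}
The $g=0$ term contributes $O(H^{n+1}/p^k)$ by Proposition~\ref{ftgen}. For the remaining sum I split according to whether $g \equiv 0 \pmod{p}$ or not. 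When $g=ph$ with $h\neq 0$, Proposition~\ref{ftgen} gives $\widehat{w_{p,\tau}}(g) \ll p^{-k}$, and the rapid decay of $\widehat{\phi}$ yields $\sum_{h\neq 0}|\widehat{\phi}(hH)|\ll 1$, contributing $O(H^{n+1}/p^k)$. When $g \not\equiv 0 \pmod{p}$, Proposition~\ref{ftgen} gives the uniform bound $|\widehat{w_{p,\tau}}(g)| \ll p^{-(k+1)}$, while Schwartz decay gives $\sum_{g \not\equiv 0 \pmod{p}} |\widehat{\phi}(gH/p)| \ll \max\{1,(p/H)^{n+1}\}$; hence this range contributes $O\bigl(H^{n+1}\,p^{-(k+1)}(1+(p/H)^{n+1})\bigr) = O(H^{n+1}/p^{k+1} + p^{n-k})$, the first summand of which is absorbed by the main term.

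Summing over the $O_n(1)$ admissible splitting types $\tau$ yields the desired bound $O(H^{n+1}/p^k + p^{n-k})$, compatible with the claimed $O_\epsilon(H^{n+1}/p^k + p^{n-k+\epsilon})$. The main technical point is the uniformity of the bound $|\widehat{w_{p,\tau}}(g)| \ll p^{-(k+1)}$ for $g$ of nonzero reduction mod $p$ supplied by Proposition~\ref{ftgen}; once that is in hand, the remainder is a standard Poisson-summation manipulation. I expect no substantive obstacle beyond carefully identifying $w_{p,\tau}$ with the splitting-type indicator when $\deg(\tau)=n$ and tracking the two regimes $H \leq p$ and $H > p$ in the error estimate.
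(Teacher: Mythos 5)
Your argument is correct and follows the paper's approach: twisted Poisson summation together with the Fourier bounds of Proposition~\ref{ftgen}. The only difference is cosmetic — you split the nonzero dual sum by whether $p \mid g$ rather than by a norm threshold of size $p^{1+\epsilon}/H$ as the paper does, which aligns more cleanly with the two regimes of Proposition~\ref{ftgen} and incidentally removes the $\epsilon$ from the secondary term, a harmless sharpening of the stated bound.
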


\begin{proof}
Let $\phi$ be a smooth function with compact support that is identically $1$ on $[-1,1]^{n+1}$.  Let $\Psi:V_{\F_p}\to \R$ be defined by $\Psi=\sum_{\sigma:\ind(\sigma)\geq k}w_{p,\sigma}$. By Proposition~\ref{ftgen}, $M(\Psi)=O(p^{-(k+1)})$. By twisted Poisson summation~(\ref{twisted}), we have
\begin{eqnarray}
&\!\!\!\!\!\! & 
\sum_{f \in V_\Z} \Psi(f) \phi(f/H)\\  &\!\!\!\!=\!\!\!\!& H^{n+1} \sum_{g \in V^*_\Z} \widehat{\Psi}(g) \widehat{\phi}\left(\frac{g H}{p} \right) \\
&\!\!\!\!\ll\!\!\!\!& H^{n+1}\widehat{\Psi}(0) \widehat{\phi}(0) + H^{n+1}\!\hspace{-.2in} \sum_{g \in \bigl[-\frac{p^{1+\epsilon}}{H},\frac{p^{1+\epsilon}}{H}\bigr]^{n+1}\cap V^*_\Z\setminus\{0\}} \hspace{-.2in} \!\!\left|\widehat{\Psi}(g)\right|+ H^{n+1} \hspace{-.2in}
\sum_{g\notin \bigl[-\frac{p^{1+\epsilon}}{H},\frac{p^{1+\epsilon}}{H}\bigr]^{n+1}\cap V^*_\Z}\hspace{-.2in}\left|\widehat{\phi}\left(\frac{g H}{p} \right)\right| \\[.1in]
&\!\!\!\!\ll_{\epsilon,N}\!\!\!\!& H^{n+1}/p^k + 
H^{n+1}(p^{1+\epsilon}/H)^{n+1} p^{-(k+1)} 
+ H^{n+1}  \!\!\!\!\!\!\!\sum_{g\notin  \bigl[-\frac{p^{1+\epsilon}}{H},\frac{p^{1+\epsilon}}{H}\bigr]^{n+1}\cap V^*_\Z} \hspace{-.2in}\left(\frac{\|g\| H}{p} \right)^{-N} 
\end{eqnarray}
for any integer $N$, where the bound on the third summand follows since $\phi$ is smooth and thus is $N$-differentiable for any integer $N$, and as a consequence, $\widehat{\phi}(g)\ll_N \|g\|^{-N}$ (see, e.g., \cite[Chapter~5 (Theorem~1.3)]{SS}). 
Choosing $N$ sufficiently large, e.g., $N > n+1+k/\epsilon$, yields the desired result. 
\end{proof}

\vspace{-.1in}
\begin{corollary}\label{Dequi}
Let $D$ be a positive integer with prime factorization $D=p_1^{k_1}\cdots p_m^{k_m}$ and let 
$C=p_1\cdots p_m$. The number of integer-coefficient binary $n$-ic forms 
in $[-H,H]^{n+1}$ 
that, modulo $p_i$, have index at least $k_i$ for $i=1,\ldots,m$ is at most $O_\epsilon(c^{\omega(C)}H^{n+1}/D+C^\epsilon\prod_{i=1}^mp_i^{n-k_i})$ for some constant $c>0$ dependent only on $n$.
\end{corollary}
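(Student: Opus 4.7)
The approach is to extend the Fourier argument behind Corollary~\ref{index2forms} from a single prime to the squarefree modulus $C=p_1\cdots p_m$ via the Chinese Remainder Theorem. Define $\Psi_C\colon V_{\Z/C\Z}\to\R$ by
$$\Psi_C(f)\;:=\;\prod_{i=1}^{m}\Psi_{p_i}(f\bmod p_i),\qquad \Psi_{p_i}\;:=\;\sum_{\sigma:\;\ind(\sigma)\geq k_i} w_{p_i,\sigma},$$
using the local functions from the proof of Corollary~\ref{index2forms}. By construction $\Psi_C\geq 1$ pointwise on the set being counted, so fixing a smooth compactly-supported majorant $\phi$ of the indicator of $[-1,1]^{n+1}$, the cardinality in question is bounded by $\sum_{f\in V_\Z}\Psi_C(f)\phi(f/H)$. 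Under the CRT isomorphism $V^*_{\Z/C\Z}\cong\prod_i V^*_{\F_{p_i}}$, the Fourier transform factors multiplicatively,  $\widehat{\Psi_C}(g)=\prod_i \widehat{\Psi_{p_i}}(g_i)$, and Proposition~\ref{ftgen} (summed over the $O_n(1)$ splitting types with $\ind(\sigma)\geq k_i$) yields $|\widehat{\Psi_{p_i}}(g_i)|\ll p_i^{-k_i}$ when $g_i=0$ and $|\widehat{\Psi_{p_i}}(g_i)|\ll p_i^{-(k_i+1)}$ when $g_i\neq 0$.

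I would then apply twisted Poisson summation (\ref{twisted}) with modulus $C$, giving
$$\sum_{f\in V_\Z}\Psi_C(f)\phi(f/H)\;=\;H^{n+1}\sum_{g\in V^*_\Z}\widehat{\Psi_C}(g)\,\widehat{\phi}(gH/C).$$
The $g=0$ contribution is bounded by $c^{\omega(C)}H^{n+1}/D$ for some $c=c(n)>0$, which already matches the first term of the claimed estimate. The crux is the error term: I would partition the nonzero $g\in V^*_\Z$ by the vanishing pattern $S=\{i:g_i\equiv 0\pmod{p_i}\}\subsetneq\{1,\ldots,m\}$. On this piece, the factorized Fourier bound reads $|\widehat{\Psi_C}(g)|\ll 1/(DC_S)$, where $C_S:=\prod_{i\notin S}p_i$; and the substitution $g=p_S g'$ with $p_S=C/C_S$ turns $\widehat{\phi}(gH/C)$ into $\widehat{\phi}(g'H/C_S)$.

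By rapid decay of $\widehat\phi$, the lattice sum $\sum_{g'\in V_\Z^*}|\widehat{\phi}(g'H/C_S)|\ll \max(1,(C_S/H)^{n+1})$, so the contribution of the $S$-piece is $\ll H^{n+1}/(DC_S)+C_S^n/D$. Summing over $S$ via the elementary identities $\sum_S 1/C_S\leq \prod_i(1+1/p_i)\leq 2^{\omega(C)}$ and $\sum_S C_S^n\leq \prod_i(1+p_i^n)\leq 2^{\omega(C)}C^n$ gives a total error of $O(2^{\omega(C)}H^{n+1}/D+2^{\omega(C)}C^n/D)$. Combining with the main term and using $2^{\omega(C)}=O_\epsilon(C^\epsilon)$ and $C^n/D=\prod_i p_i^{n-k_i}$ produces exactly the claimed bound.

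The main technical point I expect to require care is the partition-by-vanishing-pattern step: bluntly bounding all nonzero frequencies by the worst-case size $1/(D\min_ip_i)$ would fail to exploit the savings from those coordinates with $g_i=0$ and would spoil the sharp shape $\prod_i p_i^{n-k_i}$ of the second term. Partitioning by $S$, and then separately absorbing the $1/C_S$-weighted piece into the main term and the $C_S^n$-weighted piece into the error, is what recovers the correct estimate.
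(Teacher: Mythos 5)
Your proposal is correct and takes essentially the same route as the paper: CRT-factorize the Fourier transform, apply twisted Poisson summation, and partition the nonzero frequencies by vanishing pattern. Your set $S=\{i:p_i\mid\ct(g)\}$ is exactly the paper's divisor $q=(\ct(g),C)$ in different notation, and your direct rapid-decay bound $\sum_{g'}|\widehat\phi(g'H/C_S)|\ll\max\bigl(1,(C_S/H)^{n+1}\bigr)$ is a clean substitute for the paper's truncation of the Poisson sum at $\|g\|\leq C^{1+\epsilon}/H$, yielding the same two terms.
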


\begin{proof}
First, note that the values of the $\Z/C\Z$-Fourier transform are simply products of values of the $\F_{p_i}$-Fourier transforms (one value for each $i$). 

Let $\phi$ again be a smooth function with compact support that is identically $1$ on $[-1,1]^{n}$.  Let $\Psi:V^1_{\Z/C\Z}\to \R$ be defined by $\Psi=\prod_i(\sum_{\sigma:\ind(\sigma)\geq k}w_{p_i,\sigma}$).
Then we have exactly as in the case $m=1$ in Theorem~\ref{index2forms} that 
\begin{eqnarray}
& & 
\sum_{f \in V_\Z} \Psi(f) \phi(f/H) \\ &\!\!\!\!\!\ll_{\epsilon,N}\!\!\!\!& c^{\omega(C)}H^{n+1}/D +
H^{n+1}\hspace{-.2in} \!\sum_{g \in \bigl[-\frac{C^{1+\epsilon}}{H},\frac{C^{1+\epsilon}}{H}\bigr]^{n+1}\cap V^*_\Z\setminus\{0\}} \!\hspace{-.2in} \!\!\left|\widehat{\Psi}(g)\right|
+ H^{n+1} \!\!\!\!\!\!\!\!\sum_{g\notin  \bigl[-\frac{C^{1+\epsilon}}{H},\frac{C^{1+\epsilon}}{H}\bigr]^{n+1}\cap V^*_\Z} \!\hspace{-.2in} \left(\frac{\|g\| H}{C} \right)^{-N} 
\end{eqnarray}
where as before the last term on the right hand side can be absorbed into the first term by fixing $N$ to be sufficiently large. 
We now estimate the second term:
\begin{equation*}
\begin{array}{rcl}
\displaystyle H^{n+1}\hspace{-.175in}  \sum_{g\in\bigl[-\frac{C^{1+\epsilon}}{H},\frac{C^{1+\epsilon}}{H}\bigr]\cap V_\Z^*\backslash\{0\}} \hspace{-.2in} \left|\widehat{\Psi}(g)\right|
&\!\ll\!&\displaystyle
H^{n+1}c^{\omega(C)}\sum_{\substack{q\mid C\\q\neq C}}\sum_{\substack{g\in\bigl[-\frac{C^{1+\epsilon}}{H},\frac{C^{1+\epsilon}}{H}\bigr]\cap V_\Z^*\backslash \{0\}\\(\ct(g),C)=q}}q\prod_{i=1}^m p_i^{-k_i-1}
\\[.425in]
&\!\ll_\epsilon\!&\displaystyle
H^{n+1}\sum_{\substack{q\mid C\\q\neq C}}
\frac{C^{n+1+\epsilon}}{q^{n+1}H^{n+1}}\cdot q\prod_{i=1}^m p_i^{-k_i-1}
\\[.375in]
&\!\ll_\epsilon\!&\displaystyle
C^\epsilon \prod_{i=1}^m p_i^{n-k_i},
\end{array}
\end{equation*}
where $\ct(g)$ denotes the gcd of the coordinates of $g$.  This yields the desired result.
\end{proof}

\vspace{-.1in}

\begin{corollary}\label{fbound}
Let $0<\delta<1/n$. Let $D$ be a positive integer with prime factorization $D=p_1^{k_1}\cdots p_m^{k_m}$ such that 
$C=p_1\cdots p_m<H^{1+\delta}$. 
Then the number of integer-coefficient binary $n$-ic forms 
in $[-H,H]^{n+1}$ that, modulo~$p_i$, have index at least $k_i$, is $O(c^{\omega(C)} H^{n+1}/D)$
for~some~constant $c>0$ dependent only on $n$.
\end{corollary}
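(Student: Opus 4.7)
The plan is to derive Corollary~\ref{fbound} as a direct consequence of Corollary~\ref{Dequi}; the only work is to verify that, under the hypothesis $C<H^{1+\delta}$ with $\delta<1/n$, the ``error'' term in Corollary~\ref{Dequi} is absorbed into the ``main'' term $c^{\omega(C)}H^{n+1}/D$.

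To carry this out, I would first rewrite the error term $\prod_{i=1}^m p_i^{n-k_i}$ appearing in Corollary~\ref{Dequi} in a form that exposes the dependence on $C$ and $D$. Since $C=\prod_{i=1}^m p_i$ and $D=\prod_{i=1}^m p_i^{k_i}$, we have
\begin{equation*}
\prod_{i=1}^m p_i^{n-k_i} \;=\; \frac{\prod_{i=1}^m p_i^n}{\prod_{i=1}^m p_i^{k_i}} \;=\; \frac{C^n}{D},
\end{equation*}
so Corollary~\ref{Dequi} yields an upper bound of the shape $O_\epsilon\bigl(c^{\omega(C)}H^{n+1}/D + C^{n+\epsilon}/D\bigr)$ for the quantity being counted.

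Next, I would use the hypothesis $C<H^{1+\delta}$ to bound $C^{n+\epsilon}/D<H^{(1+\delta)(n+\epsilon)}/D$. To absorb this into the first term, it suffices to have $(1+\delta)(n+\epsilon)\leq n+1$, which rearranges to $\delta n + \epsilon(1+\delta)\leq 1$. Since $\delta<1/n$ by hypothesis, we have $1-\delta n>0$, so we may fix any $\epsilon>0$ with $\epsilon\leq (1-\delta n)/(1+\delta)$; for such $\epsilon$ the second term is at most $H^{n+1}/D\leq c^{\omega(C)}H^{n+1}/D$, as required.

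I do not anticipate any serious obstacle, since all the analytic content has already been absorbed into Corollary~\ref{Dequi}; the corollary is essentially a bookkeeping statement that the Fourier-theoretic error term becomes negligible precisely in the range $C<H^{1+\delta}$ with $\delta<1/n$. The only point requiring any care is that $\epsilon>0$ must be chosen as a fixed constant depending on $\delta$ and $n$ (not allowed to depend on $D$ or $C$), which is legitimate because the implicit constant in the final $O(\cdot)$ is allowed to depend on $n$ (and hence on the fixed $\delta$).
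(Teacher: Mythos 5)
Your proof is correct and follows exactly the same route as the paper's, which likewise cites Corollary~\ref{Dequi}, rewrites the error term as $C^{\epsilon}\prod p_i^n/\prod p_i^{k_i}=C^{n+\epsilon}/D$, and absorbs it into the main term using $C<H^{1+\delta}$ with $\delta<1/n$; you have merely spelled out the choice of $\epsilon$ that the paper leaves implicit.
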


\begin{proof}
By  Corollary~\ref{Dequi}, the number of integer-coefficient binary $n$-ic forms with the desired property is $O_\epsilon(c^{\omega(C)}H^{n+1}/D+C^{\epsilon}\prod_{i=1}^mp_i^{n}/\prod_{i=1}^mp_i^{k_i})=
O(c^{\omega(C)} H^{n+1}/D).$
\end{proof}

\vspace{-.1in}

\subsection{Monic polynomials}\label{Fourier2}

For a ring $R$, let $V^1_R\subset V_R$ denote the subset of binary $n$-ic forms over $R$ having leading coefficient 1, which we identify with the space of monic polynomials of degree $n$ over $R$.  For a function $\Psi_q : V_{\Z/q\Z}^1 \rightarrow \C$, let \smash{$\widehat{\Psi_q}\colon V^{1\ast}_{\Z/q\Z} \rightarrow \C$} be its Fourier transform defined by the usual formula
$$
{\smash{\widehat{\Psi_q}(g)} = \frac1{q^{n}}
\sum_{f \in V^{1}_{\Z/q\Z}} \Psi_q(f) \exp\left(\frac{2 \pi i [f, g]}{q}\right).}
$$
When $\Psi_q$ is the characteristic function of a set {$S\subset V^1_{\Z/q\Z}$}, then upper bounds on the maximum of \smash{$\widehat{\Psi}_q(g)$} over all nonzero $g$ constitutes a measure of 
{equidistribution} of {$S$} in suitable boxes of integer-coefficient binary $n$-ic forms since, for any Schwartz function {$\phi$} approximating the characteristic function of the box $[-1,1]^{n}$, the twisted Poisson summation formula gives 
{\begin{equation}
\begin{array}{rl}\label{twist2}
& \displaystyle\sum_{f=(a_1,\ldots,a_n) \in V_\Z^1} \Psi_q(a_1,\ldots,a_n) \phi(a_1/H,a_2/H,\ldots,a_n/H) \\[.25in] = 
H^{n} &\displaystyle\sum_{g=(b_1,\ldots,b_n) \in V^{1*}_\Z} \widehat{\Psi_q}\,(b_1,\ldots,b_n) \widehat{\phi}\left(b_1H/q,b_2H/q\ldots,b_nH/q\right).
\end{array}\end{equation}}For suitable {$\phi$}, the left side of (\ref{twist2}) will be an upper bound for the number of 
elements in~$S$ in the box $[-H,H]^{n}$. The  {$g = 0$} term is the expected main term,  while the rapid decay of \smash{$\widehat{\phi}$} implies that the error term is effectively bounded by $H^{n}$ times the maximum of \smash{$|\widehat{\Psi_q}(g)|$} 
over all $0\neq g\in V^*_\Z$ whose coordinates are of size at most $O(q^{1+\epsilon}/H)$, and this in turn can be bounded by $O(H^{n}(q^{1+\epsilon}/H)^{n}M(\Psi_q))=O(q^{n+\epsilon}M(\Psi_q))$. 

In this section, we first show that the set of monic polynomials of degree $n$ over $\F_p$ having given index---or having splitting type containing a given splitting type $\sigma$---are very well distributed in boxes. We accomplish this by demonstrating cancellation in the Fourier transform of the corresponding characteristic functions.

\begin{proposition}\label{ftgen2}
Let $\sigma=(f_1^{e_1}\cdots f_r^{e_r})$ be a splitting type with $\deg(\sigma)=d$, $\ind(\sigma)=k$, and $\len(\sigma)=\ell$.  Let $w_{p,\sigma}:V^1_{\F_p}\to\C$ be defined by
\begin{align*}
w_{p,\sigma}(f):=&\text{ the number of $r$-tuples $(P_1,\ldots,P_r)$, up to the action of the group of}  \\[-.04in] & \text{  permutations of $\{1,\ldots,r\}$ preserving $\sigma$, such that the $P_i$ are distinct } \\[-.04in] &\text{ irreducible monic polynomials with $\deg P_i=f_i$ for each $i$ and $P_1^{e_1}\cdots P_r^{e_r}\mid f$}.
\end{align*}
Then 
\begin{equation*}
\widehat{w_{p,\sigma}}(g)=
\begin{cases}
	{\displaystyle\frac{p^{-k}}{\small\Aut(\sigma)}\!+\!O(p^{-(k+1)})}	& \!\text{if } {g=0};\\[.125in]
	{O(p^{-(k+1)})}		& \!\text{if $g\neq 0$ and $d<n$}\\[.02in]
O(p^{-(k+1/2)}) & \!\text{if $g\neq 0$ and $d=n$;} 
\end{cases}
\end{equation*}
\end{proposition}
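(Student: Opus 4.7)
The plan is to follow the proof of Proposition~\ref{ftgen} almost verbatim for the cases $g=0$ and $g\neq 0$ with $d<n$, and to introduce a new ingredient only for the case $d=n$, where the divisibility condition pinpoints a single monic polynomial and hence all cancellation must come from the outer parameter sum rather than the inner character sum. First I would expand
$$\widehat{w_{p,\sigma}}(g) = \frac{1}{p^n} \sum_{(P_1,\ldots,P_r)} \sum_{\substack{f \in V^1_{\F_p}\\ P_1^{e_1}\cdots P_r^{e_r}\mid f}} e^{2\pi i [f,g]/p},$$
where the outer sum runs over tuples of distinct monic irreducibles $P_i$ of degree $f_i$, taken modulo the symmetries of $\sigma$.

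For $g=0$, the inner sum counts monic degree-$n$ polynomials divisible by a fixed monic polynomial of degree $d$, which equals $p^{n-d}$ in both the $d<n$ and $d=n$ (where $p^{n-d}=1$) cases; combining with the count $p^{\ell}/\#\Aut(\sigma)+O(p^{\ell-1})$ of outer tuples and using $d=k+\ell$ yields the stated main term. For $g\neq 0$ with $d<n$, the set of monic $f$ divisible by $P_1^{e_1}\cdots P_r^{e_r}$ is an affine subspace of $V^1_{\F_p}$ of positive dimension $n-d$, so the inner sum equals $p^{n-d}$ times a root of unity if $g$ annihilates the associated tangent space and $0$ otherwise. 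The monic analogue of the hyperplane-avoidance argument from Remark~\ref{irremark}---namely, that no irreducible component of the index-$\geq k$ locus in $\A^n \cong V^1$ lies in a hyperplane---then bounds the number of tuples for which cancellation fails by $O(p^{\ell-1})$, yielding the desired $O(p^{-(k+1)})$ bound exactly as in Proposition~\ref{ftgen}.

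The genuinely new case, and the main obstacle, is $g\neq 0$ with $d=n$. Here the divisibility condition forces $f=P_1^{e_1}\cdots P_r^{e_r}$, so the inner sum collapses to a single character evaluation and
$$\widehat{w_{p,\sigma}}(g) = \frac{1}{p^n} \sum_{(P_1,\ldots,P_r)} e^{2\pi i [\,P_1^{e_1}\cdots P_r^{e_r},\,g\,]/p}.$$
To obtain $O(p^{-(k+1/2)})$ I need square-root cancellation of order $O(p^{\ell/2})$ in this outer sum of length $\asymp p^{\ell}$ over the $\ell$-dimensional parameter space of coefficients of the $P_i$. I would obtain it by appealing to the Deligne--Weil bound for exponential sums on affine varieties: the multiplication map $(P_1,\ldots,P_r)\mapsto \prod P_i^{e_i}$ is a finite morphism from $\A^{\ell}$ onto an $\ell$-dimensional irreducible subvariety $X\subset V^1_{\F_p}\cong \A^n$, and by Remark~\ref{irremark} $X$ does not lie in any hyperplane, so the pullback of the nonzero linear functional $[\cdot,g]$ is a nonconstant polynomial on $\A^{\ell}$. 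This nondegeneracy suffices to give the required $O(p^{\ell/2})$ bound, and then
$$\widehat{w_{p,\sigma}}(g) \;=\; O(p^{\ell/2-n}) \;=\; O(p^{-(n+k)/2}) \;=\; O(p^{-(k+1/2)}),$$
the last step because $d=n$ together with $\ell\geq 1$ forces $n\geq k+1$. The delicate point is verifying that the pullback of $[\cdot,g]$ is nondegenerate enough for Deligne's bound to apply uniformly in $\sigma$ and $g$; the irreducibility and non-hyperplane-containment statement of Remark~\ref{irremark} is precisely what provides this.
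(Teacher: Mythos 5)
Your treatment of $g=0$ and of $g\neq 0$ with $d<n$ matches the paper's argument essentially verbatim and is fine. The gap is in the $d=n$ case, and it is a genuine one.

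You reduce to bounding the complete sum $\sum_{(P_1,\ldots,P_r)} e^{2\pi i [\,P_1^{e_1}\cdots P_r^{e_r},\,g\,]/p}$ over the $\ell$-dimensional parameter space of roots/coefficients of the $P_i$, and claim square-root cancellation $O(p^{\ell/2})$ by ``Deligne--Weil,'' on the sole grounds that the pullback $Q$ of the linear functional $[\cdot,g]$ is nonconstant on $\A^\ell$ (which is all that Remark~\ref{irremark} gives you). That inference does not hold. Nonconstancy of a polynomial in $\ell$ variables gives at best a saving of a single factor $p^{1/2}$, not $p^{\ell/2}$: for instance, if $Q$ happens to depend essentially on only one variable, the sum factors as $p^{\ell-1}$ times a one-variable Weil sum, and is of size $\asymp p^{\ell-1/2}$. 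Deligne's theorem yielding $O(p^{\ell/2})$ requires a genuinely stronger nondegeneracy hypothesis (e.g.\ that the top-degree form of $Q$ cuts out a smooth projective hypersurface, or a suitable substitute), and nothing in your argument---nor in Remark~\ref{irremark}---establishes such a condition uniformly in $\sigma$ and $g$. In fact, $g$ is an \emph{arbitrary} nonzero element of $V^{1*}_{\F_p}$, and for many choices of $g$ the pullback $Q$ will be highly degenerate (small degree, few essential variables), so the uniform $O(p^{\ell/2})$ bound you want is simply not available.

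The paper sidesteps this entirely with an elementary trick that delivers exactly the saving needed and no more. It partitions the monic polynomials $P = P_1^{e_1}\cdots P_r^{e_r}$ into orbits of size $p$ under translation $x\mapsto x+c$, $c\in\F_p$. For a fixed representative $P$ and nonzero $g=(b_1,\ldots,b_n)$, the function $c\mapsto [P(x+c),g]$ is a nonconstant \emph{one-variable} polynomial $Q(c)$ of degree $m$, where $m$ is the largest index with $b_m\neq 0$; the one-variable Weil bound then gives $\bigl|\sum_c e^{2\pi i Q(c)/p}\bigr|\leq (m-1)p^{1/2}$. Summing over the $O(p^{\ell-1})$ orbit representatives trivially yields $O(p^{\ell-1+1/2})$ for the whole outer sum, hence $|\widehat{w_{p,\sigma}}(g)| = O(p^{\ell-1/2-n}) = O(p^{-(k+1/2)})$ since $\ell = n-k$. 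Note this is \emph{weaker} than the $O(p^{\ell/2})$ you aimed for, but it suffices, and it is provable. So the fix for your argument is to abandon the $\ell$-variable Deligne appeal in favor of this one-parameter translation averaging together with the classical one-variable Weil bound.
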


\begin{proof}
We have
\begin{eqnarray}\displaystyle
\widehat{w_{p,\sigma}}(g)&=&\displaystyle\frac{1}{p^{n}}\sum_{f\in V^1_{\F_p}} e^{2\pi i[f,g]/p} w_{p,\sigma}(f)
\label{firstexp2}\\
&=&\displaystyle\frac{1}{p^{n}}\sum_{P_1,\ldots,P_r}
\:\sum_{P_1^{e_1}\cdots P_r^{e_r} \mid f}
e^{2\pi i[f,g]/p}. \label{secondexp2}
\end{eqnarray}

Evaluating (\ref{secondexp2}) for $g=0$ gives 
$\widehat{w_{p,\sigma}}(0)=(p^{-k}/\#\Aut(\sigma))+O(p^{-(k+1)})$.  This is
because 1) the number of possibilities for $P_1,\ldots,P_r$ is 
$(1/\#\Aut(\sigma))p^{\ell}+O(p^{{\ell-1}})$, and 2) the set of $f\in V_{\F_p}$ such that $P_1^{e_1}\cdots P_r^{e_r}$ divides $f$ is a linear subspace $U:=U_{P_1^{e_1},\ldots,P_r^{e_r}}\subset V_{\F_p}$ of codimension~$d$, and the set of monic such $f$ is an affine subspace $U^1\subset U$ of codimension 1; hence the number of $f\in U^1$ is  $p^{n-d}.$ We conclude that
$$\widehat{w_{p,\sigma}}(0)=\frac1{p^n}\left(\frac{p^{\ell}}{\#\Aut(\sigma))}+O(p^{\ell-1})\right)p^{n-d} = {\displaystyle\frac{p^{-k}}{\small\Aut(\sigma)}+O(p^{-(k+1)})}.	$$

Next, assume $g\neq 0$. 
We consider each of the $(1/\#\Aut(\sigma))p^{\ell}+O(p^{{\ell-1}})$  terms in the outer sum separately. 
We thus fix $P_1,\ldots,P_r$.
Let $U\subset V_{\F_p}$ denote again the codimension $d$ linear subspace of all $f\in V_{\F_p}$ such that  $P_1^{e_1}\cdots P_r^{e_r}$ divides $f$ and $U^1\subset U$ the codimension~1 affine subspace of those $f\in U$ that are monic. Thus $|U^1|=p^{n-d}$.  

We naturally identify \smash{$V^{1*}_{\F_p}$} with the linear subspace of $V^{*}_{\F_p}$ that kills the monomial $f(x,y)=x^n$.
The set of {$g\in V^{1*}_{\F_p}$} such that $[f,g]$ is a constant (dependent only on $g$) for all $f\in U^1$ is a subspace \smash{$W^1:=W^1_{P_1^{e_1},\ldots,P_r^{e_r}}\subset V^{1*}_{\F_p}$} of dimension~$d$. 
If $g\in W^1$, then the contribution of the inner sum for this fixed choice of $P_1,\ldots,P_r$ is thus \smash{$e^{2\pi i[U^1,g]}p^{n-d}$}. 
Meanwhile, if $g\notin W^1$, 
then, for any $c\in\F_p$, the set $\{f\in U^1: [f,g]=c\}$ is a codimension~1 affine subspace of~$U^1$, and so the contribution of the inner sum is a union of $p^{n-d-1}$ complete sums and hence is~$0$. 

We claim that the number of choices of $P_1,\ldots,P_r$ for which $g\in W^1_{P_1^{e_1},\ldots,P_r^{e_r}}$ is $O(p^{\ell-1})$, if $d<n$. 
Indeed, $g\in W^1_{P_1^{e_1},\ldots,P_r^{e_r}}$ if and only if $g$ annihilates the subspace $U^0_{P_1^{e_1},\ldots,P_r^{e_r}}:=\{f-f': f,f'\in U^1_{P_1^{e_1},\ldots,P_r^{e_r}}\}$ of the subspace $V^0_{\F_p}$ of polynomials in $V_{\F_p}$ of degree $\leq n-1$. The subspace $U^0_{P_1^{e_1},\ldots,P_r^{e_r}}\subset V^0_{\F_p}$ has cardinality $p^{n-d}$, and $U^0_{P_1^{e_1},\ldots,P_r^{e_r}}$ determines $P_1^{e_1}\cdots P_r^{e_r}$.  The subspace $\Ann^0(g):=\{f\in V^0_{\F_p}:[f,g]=0\}$ has $\F_p$-dimension~$n-1$; hence the number of polynomials in the hyperplane $\Ann^0(g)$ having index at least $k$ is $O(p^{n-1-k})$, as when $p>n$ none of the irreducible components of the variety in $V^0$ of codimension $\geq k$ consisting of polynomials having index~$\geq k$ lie in any hyperplane in $V^0$. Since an element of $V^0_{\F_p}$ can lie in at most $O_n(1)$ $U^0_{P_1^{e_1},\ldots,P_r^{e_r}}$'s, it follows that the maximum number of possibilities for $P_1^{e_1},\ldots,P_r^{e_r}$, such that $U^0_{P_1^{e_1},\ldots,P_r^{e_r}}\subset \Ann(g)$, is $O(p^{n-1-k}/p^{n-d})=O(p^{\ell-1})$.

Thus, if $d<n$, then 
$$|\widehat{w_{p,\sigma}}(g)|=\frac1{p^{n}}\cdot p^{n-d}\cdot O(p^{\ell-1}) = O(p^{-(k+1)}).	$$

If $d=n$, then every $U^1:=U^1_{P_1^{e_1},\ldots,P_r^{e_r}}$ contains just one element, namely, $P_1^{e_1}\cdots P_r^{e_r}$, and so every 
$g\in V^{1*}_{\F_p}$ is contained in every $W^1:=W^1_{P_1^{e_1},\ldots,P_r^{e_r}}$.
In this case, we may apply the Weil bound~\cite{Weil} on exponential sums to obtain a nontrivial estimate as follows. We partition all polynomials  $P(x):=P_1^{e_1}(x)\cdots P_r^{e_r}(x)$ into orbits of size $p$ under the action of translation $x\mapsto x+c$ for $c\in\F_p$.  We then consider the elements of each orbit together in the sum (\ref{secondexp2}). 
Given such a polynomial $P(x)$, if $g=(b_1,\ldots,b_n)\neq 0$, then $[P(x+c),g]$ is a nonconstant univariate polynomial $Q$ in $c$ of degree $m$, where $m$ is the maximum index such that $b_m\neq 0$. The contributions of the inner sums in (\ref{secondexp2}) corresponding to $P(x)$ and its translates $P(x+c)$ add up to 
$$\sum_{c\in\F_p} e^{2\pi i [P(x+c),g]} = \sum_{c\in\F_p} e^{2\pi i Q(c)}$$
which is at most $(m-1)p^{1/2}$ in absolute value by the Weil bound.  
Summing over $P(x)=P_1^{e_1}(x)\cdots P_r^{e_r}(x)$ (one from each equivalence class under the action of translation $x\mapsto x+c)$ then yields
$$|\widehat{w_{p,\sigma}}(g)|=O(p^{-n}p^{\ell-1}p^{1/2})=O(p^{-(k+1/2)}),$$
which improves again upon the trivial bound $O(p^{-n}p^{\ell})=O(p^{-k}),$ as desired.
\end{proof}

\vspace{-.1in}

\begin{corollary}\label{index2forms2}
Let $p$ be a prime.  The number of integral monic polynomials of degree $n$
in $[-H,H]^{n}$ 
that, modulo $p$, have index at least $k$ is at most $O(H^{n}/p^k+p^{n-k-1/2+\epsilon})$. 
\end{corollary}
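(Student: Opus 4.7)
The plan is to mirror the proof of Corollary~\ref{index2forms} verbatim, but in the monic setting, replacing Proposition~\ref{ftgen} by Proposition~\ref{ftgen2}. The improvement from the error exponent $p^{n-k+\epsilon}$ in the binary-form case to $p^{n-k-1/2+\epsilon}$ here comes entirely from the sharper estimate $\widehat{w_{p,\sigma}}(g)=O(p^{-(k+1/2)})$ that Proposition~\ref{ftgen2} provides via the Weil bound in the case $\deg(\sigma)=n$; this case is unavoidable in the monic setting because the splitting type of any monic polynomial of degree $n$ over $\F_p$ has degree $n$.

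First I would set $\Psi\colon V^1_{\F_p}\to\R$ to be $\Psi=\sum_{\sigma:\,\ind(\sigma)\geq k}w_{p,\sigma}$, where the sum ranges over the $O_n(1)$ splitting types of degree at most $n$ and index at least $k$. Given any monic polynomial $f\in V^1_{\F_p}$ of index at least $k$, its own splitting type $\sigma_f$ has $\deg(\sigma_f)=n$ and $\ind(\sigma_f)\geq k$, and satisfies $w_{p,\sigma_f}(f)\geq 1$; hence $\Psi$ dominates the indicator function of $\{\ind\geq k\}$. Choosing a Schwartz bump $\phi$ that is identically $1$ on $[-1,1]^n$, the sum $\sum_{f\in V^1_\Z}\Psi(f)\phi(f/H)$ therefore bounds the count we seek. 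By Proposition~\ref{ftgen2} applied to each summand, $\widehat{\Psi}(0)=O(p^{-k})$ while, for every $g\neq 0$, the worst case $\deg(\sigma)=n$ gives $M(\Psi):=\max_{g\neq 0}|\widehat{\Psi}(g)|=O(p^{-(k+1/2)})$.

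Next I would apply the twisted Poisson summation formula~(\ref{twist2}) with $q=p$. The $g=0$ frequency contributes the expected main term $H^n\widehat{\Psi}(0)\widehat{\phi}(0)=O(H^n/p^k)$. The medium-range frequencies $g\in V^*_\Z\setminus\{0\}$ with $\|g\|_\infty\leq p^{1+\epsilon}/H$ contribute at most
\[
H^n\cdot O\bigl((p^{1+\epsilon}/H)^n\bigr)\cdot M(\Psi)\;=\;O(p^{n-k-1/2+n\epsilon}),
\]
which is $O(p^{n-k-1/2+\epsilon})$ after rescaling $\epsilon$. Finally, since $\phi$ is Schwartz we have $\widehat{\phi}(y)\ll_N\|y\|^{-N}$ for arbitrary $N$, so the tail $\|g\|_\infty>p^{1+\epsilon}/H$ contributes $O_N(H^n\sum_{\|g\|>p^{1+\epsilon}/H}(\|g\|H/p)^{-N})$, which is absorbed into the previous terms once $N$ is chosen large enough in terms of $n$ and $\epsilon$. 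Summing the three contributions yields the claimed bound, and no genuine new obstacle arises beyond reusing the Weil-bound input already packaged in Proposition~\ref{ftgen2}.
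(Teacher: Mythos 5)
Your proposal is correct and is precisely the argument the paper has in mind: the paper simply states that Corollaries~\ref{index2forms2}--\ref{fbound2} ``follow from Proposition~\ref{ftgen2} just as Corollaries~\ref{index2forms}--\ref{fbound} followed from Proposition~\ref{ftgen},'' and you have spelled that deduction out faithfully. In particular you correctly identify the one genuinely new point in the monic setting, namely that the splitting type of a monic degree-$n$ polynomial over $\F_p$ has degree exactly $n$, so the $d=n$ branch of Proposition~\ref{ftgen2} with its Weil-bound input $O(p^{-(k+1/2)})$ is the one that governs $M(\Psi)$ and produces the improved error exponent $p^{n-k-1/2+\epsilon}$.
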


\begin{corollary}\label{Dequi2}
Let $D$ be a positive integer with prime factorization $D=p_1^{k_1}\cdots p_m^{k_m}$ and let $C=p_1\cdots p_m$. 
The number of integral monic polynomials of degree $n$ in $[-H,H]^{n}$ 
that, modulo $p_i$, have index at least $k_i$ for $i=1,\ldots,m$ is at most $O_\epsilon(c^{\omega(C)}H^{n}/D+C^\epsilon\prod_{i=1}^mp_i^{n-k_i-1/2})$. 
\end{corollary}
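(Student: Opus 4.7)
The plan is to follow the proof of Corollary~\ref{Dequi} almost verbatim, substituting the monic Fourier bounds of Proposition~\ref{ftgen2} for the binary ones of Proposition~\ref{ftgen}. Concretely, I would define $\Psi\colon V^1_{\Z/C\Z}\to\R$ by $\Psi=\sum_{(\sigma_1,\ldots,\sigma_m)}\prod_{i=1}^m w_{p_i,\sigma_i}$, where each $\sigma_i$ ranges over splitting types with $\ind(\sigma_i)\geq k_i$ and the $w_{p_i,\sigma_i}$ are viewed as functions on $V^1_{\Z/C\Z}$ via the Chinese Remainder Theorem. Then $\Psi$ dominates the indicator of the set we wish to count, so applying twisted Poisson summation (\ref{twist2}) to $\sum_{f\in V^1_\Z}\Psi(f\bmod C)\phi(f/H)$ for a Schwartz $\phi$ majorizing $\mathbf{1}_{[-1,1]^n}$ gives the desired upper bound.

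By CRT, $\widehat{\Psi}(g)=\prod_i\widehat{\Psi_{p_i}}(g_i)$, where $g_i$ is the reduction of $g$ mod $p_i$. Proposition~\ref{ftgen2} gives $|\widehat{\Psi_{p_i}}(0)|\ll p_i^{-k_i}$ and, when $g_i\neq 0$, $|\widehat{\Psi_{p_i}}(g_i)|\ll p_i^{-(k_i+1/2)}$, where the latter is the worst case ($d=n$, controlled by the Weil bound) in Proposition~\ref{ftgen2}. The $g=0$ term then contributes the first claimed summand $O(c^{\omega(C)}H^n/D)$, while the tail $\|g\|_\infty>C^{1+\epsilon}/H$ is absorbed by the Schwartz decay $|\widehat{\phi}(y)|\ll_N(1+|y|)^{-N}$ for $N$ chosen large, exactly as in Corollary~\ref{index2forms2}.

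For the remaining error I partition the nonzero $g$ in the small box $[-C^{1+\epsilon}/H,C^{1+\epsilon}/H]^n$ by $q=\gcd(\ct(g),C)$ with $q\mid C$ and $q\neq C$. Primes $p_i\mid q$ contribute $O(p_i^{-k_i})$ (main term of Proposition~\ref{ftgen2}) and primes $p_j\mid (C/q)$ contribute $O(p_j^{-(k_j+1/2)})$, giving $|\widehat{\Psi}(g)|\ll c^{\omega(C)}q^{1/2}/(D\sqrt{C})$. The number of such $g$ is $O\bigl((C^{1+\epsilon}/(qH))^n\bigr)$, so after multiplying by $H^n$ and summing over $q\mid C$, the error is bounded by $c^{\omega(C)}(C^{n+n\epsilon-1/2}/D)\sum_{q\mid C}q^{1/2-n}\ll_\epsilon C^\epsilon\prod_i p_i^{n-k_i-1/2}$, using $\sum_{q\mid C}q^{1/2-n}=O(1)$ for $n\geq 2$ and the standard estimate $c^{\omega(C)}=O_\epsilon(C^\epsilon)$. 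The borderline case $q=C$ (so $C\mid g\neq 0$) is negligible because $\widehat{\Psi}(g)=\widehat{\Psi}(0)$ for such $g$ while $\sum_{h\in\Z^n\setminus\{0\}}|\widehat{\phi}(hH)|=O_N(H^{-N})$.

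I do not anticipate a serious obstacle: the argument is structurally identical to the binary forms case, and the only substantive new input is the half-power loss in the Fourier bound, which is intrinsic to the monic setting and originates in the Weil-bound estimate in Proposition~\ref{ftgen2} at $d=n$. This loss accounts exactly for the $-1/2$ in the exponent of the second summand of the statement.
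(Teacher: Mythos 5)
Your proposal is correct and takes essentially the same route as the paper, which simply states that Corollary~\ref{Dequi2} follows from Proposition~\ref{ftgen2} ``just as Corollaries~\ref{index2forms}--\ref{fbound} followed from Proposition~\ref{ftgen}.'' You have faithfully carried out that transfer: the CRT factorization of the Fourier transform, the decomposition of the small-box contribution by $q=\gcd(\ct(g),C)$, the replacement of the binary-form exponent $-(k_i+1)$ by the monic exponent $-(k_i+1/2)$ from the Weil-bound case $d=n$, and the resulting $\sum_{q\mid C}q^{1/2-n}=O(1)$ all match the intended argument and correctly produce the $C^\epsilon\prod p_i^{n-k_i-1/2}$ error term.
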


\begin{corollary}\label{fbound2}
Let $0<\delta < 1/(2n-1)$. Let $D$ be a positive integer with prime factorization $D=p_1^{k_1}\cdots p_m^{k_m}$ such that 
$C=p_1\cdots p_m<H^{1+\delta}$.
For each $i=1,\ldots,m$, let $k_i>1$ be a positive integer. 
Then the number of integral monic polynomials of degree $n$ 
in $[-H,H]^{n}$ that, modulo~$p_i$, have index at least $k_i$, is at most $O(c^{\omega(C)} H^{n}/D).$
\end{corollary}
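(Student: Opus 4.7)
The plan is to deduce Corollary~\ref{fbound2} as a direct consequence of Corollary~\ref{Dequi2}, just as Corollary~\ref{fbound} was deduced from Corollary~\ref{Dequi}. The entire content is to check that under the given hypothesis on $\delta$, the second (error) term in the bound of Corollary~\ref{Dequi2} is absorbed into the first (main) term.

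First I would invoke Corollary~\ref{Dequi2} to bound the count by
$$
O_\epsilon\Bigl(c^{\omega(C)} H^n/D \,+\, C^\epsilon \prod_{i=1}^m p_i^{n-k_i-1/2}\Bigr).
$$
Next I would consolidate the product: since the exponent $n-1/2$ is independent of $i$, we have $\prod_{i=1}^m p_i^{n-1/2} = C^{n-1/2}$, so
$$
\prod_{i=1}^m p_i^{n-k_i-1/2} \;=\; \frac{C^{n-1/2}}{\prod_{i=1}^m p_i^{k_i}} \;=\; \frac{C^{n-1/2}}{D}.
$$
Thus the error term equals $C^{n-1/2+\epsilon}/D$, and the task reduces to verifying that $C^{n-1/2+\epsilon} \ll H^n$ (since a factor of $1/D$ already matches the main term).

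For this, I would use the hypothesis $C < H^{1+\delta}$, which gives $C^{n-1/2+\epsilon} < H^{(1+\delta)(n-1/2+\epsilon)}$. Since $\delta < 1/(2n-1)$, we have $(1+\delta)(n-1/2) < n$ \emph{strictly}, so by continuity we may choose $\epsilon>0$ small enough that $(1+\delta)(n-1/2+\epsilon) \le n$. This yields $C^{n-1/2+\epsilon}/D \le H^n/D$, which is dominated by $c^{\omega(C)} H^n/D$, completing the proof.

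There is no real obstacle here beyond the exponent arithmetic, but it is worth remarking on the shape of the hypothesis. The threshold $\delta < 1/(2n-1)$ for monic polynomials is strictly smaller than the threshold $\delta < 1/n$ for binary forms in Corollary~\ref{fbound}, and this weakening is dictated precisely by the weaker Fourier bound available in the monic setting: Proposition~\ref{ftgen2} yields only $O(p^{-(k+1/2)})$ in the ``$d=n$'' case (via Weil), whereas Proposition~\ref{ftgen} gives the stronger $O(p^{-(k+1)})$ throughout. The assumption $k_i>1$ ensures that the splitting types being summed over indeed carry nontrivial ramification at each $p_i$, consistent with the way this corollary will be applied in the main counting arguments (where $\ind(G)\geq 2$ forces $v_{p_i}(\Disc)\geq 2$ via Proposition~\ref{valindcon}).
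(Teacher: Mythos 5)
Your proposal is correct and follows the same route the paper indicates—deducing Corollary~\ref{fbound2} from Corollary~\ref{Dequi2} by consolidating the error term to $C^{n-1/2+\epsilon}/D$ and absorbing it into the main term via the hypothesis $\delta<1/(2n-1)$ and a fixed small $\epsilon$, exactly as Corollary~\ref{fbound} is deduced from Corollary~\ref{Dequi}. Your arithmetic checks out, and your side remark correctly identifies that the smaller threshold $1/(2n-1)$ (versus $1/n$ in the binary-form case) is forced by the weaker $O(p^{-(k+1/2)})$ Weil-type Fourier bound in the $d=n$ case of Proposition~\ref{ftgen2}.
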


Corollaries~\ref{index2forms2}--\ref{fbound2} follow from Proposition~\ref{ftgen2} 
just as Corollaries~\ref{index2forms}--\ref{fbound}
followed from Proposition~\ref{ftgen}.

\section{Proof of Theorem~\ref{main1}}\label{main1pf}

In this section, we prove van der Waerden's Conjecture on random Galois groups as stated in Theorem~\ref{main1}, namely, that $E_n(H)=O(H^{n-1})$. 
In order to apply a suitable sieve argument to obtain this bound, we divide the set of irreducible monic integer polynomials $f(x)=x^n+a_1x^{n-1}+\cdots+a_n$ of degree $n$, such that $|a_i|<H$ for all $i$ and $G_f<S_n$ (which, as discussed in the introduction, may be assumed to be primitive), into three subsets.  

To define these three cases for $f$, let $K_f:=\Q[x]/(f(x))$,
let $C$ denote the product of ramified primes in $K_f$, and let $D=|\Disc(K_f)|$. 
Let $\delta>0$ be a small constant, say $\frac1{2n}.$

\subsection*{Case I: $C\leq H^{1+\delta}$ and $D > H^{2+2\delta}$}

We first consider those $f$ for which the product $C$ of ramified primes in $K_f:=\Z[x]/(f(x))$ is at most $H^{1+\delta}$, but the absolute discriminant $D$ of $K:=K_f$ is greater than $H^{2+2\delta}$.  

By Proposition~\ref{valindcon} and the paragraph following it, $D$ must be squarefull as we have assumed that $G_f<S_n$ is primitive.  Given such a $D$, the polynomials $f$ such that $|\Disc(K_f)|=D$ satisfy congruence conditions modulo $C$ that have density $O(\prod_{p\mid C} c/p^{v_p(D)})=O(c^{\omega(D)}/D)$ for a suitable constant $c>0$. 

If $C<H$, then the number of such $f$ within the box $\{|a_i|<H\}$ of sidelength~$H$ can be bounded directly to be $O(c^{\omega(D)}H^n/D)$, because the modulus of the congruence conditions being imposed is smaller than the sidelength of the box. To weaken the condition $C<H$, we may use Fourier analysis, as carried out in Section~\ref{Fourier}.  By Corollary~\ref{fbound2}, we obtain the same estimate $O(c^{\omega(D)}H^{n}/D)$ for the number of monic integer polynomials $f(x)=x^n+a_1x^{n-1}+\cdots+a_n$ of degree $n$ satisfying $|a_i|<H$ for all $i$, $C<H^{1+\delta}$, and $\Disc(K_f)=D$. 

Summing $O(c^{\omega(D)}H^n/D)$ over all squarefull $D>H^{2+2\delta}$ then gives the desired estimate $O(H^{n-1})$ in this case:
$$\sum_{D>H^{2+2\delta} \:\mathrm{ squarefull}} O(c^{\omega(D)}H^{n}/D) = O_\epsilon(H^{n-(1+\delta)+\epsilon}) =O(H^{n-1}).$$

\subsection*{Case II: $C\leq H^{1+\delta}$ and $D \leq  H^{2+2\delta}$}

We next consider those $f$ for which the product $C$ of ramified primes in $K_f:=\Z[x]/(f(x))$ is at most $H^{1+\delta}$ and the absolute discriminant $D$ of $K_f$ is at most $H^{2+2\delta}$.  

In this case, $K=K_f$ is a number field of degree $n$ having absolute discriminant at most $H^{2+2\delta}$.  The number of number fields $K$ of degree $n$ and absolute discriminant at most~$X$, by a result of Schmidt~\cite{Schmidt},\footnote{Improved results for $n$ sufficiently large have been obtained by Ellenberg--Venkatesh~\cite{EV}, Couveignes~\cite{Couveignes}, and most recently, Lemke Oliver and Thorne~\cite{LOT2}.} is $O(X^{(n+2)/4})$. By a result of Lemke Oliver and Thorne~\cite[Proposition~2.2]{LOT}, a number field $K$ of degree $n$ with associated Galois group primitive arises for at most $$O(H\log^{n-1}\!H/|\Disc(K)|^{1/(n(n-1))})$$ such~$f$.  The total number of $f$ in this case is thus at most 
$$O\left(\frac{H^{(2+2\delta)(n+2)/4}\cdot H\log^{n-1}\!H}{H^{(2+2\delta)/(n(n-1))}}\right)=O(H^{n-1})$$ 
for $n\geq 6$.

The exact results known regarding the density of discriminants of quintic fields~\cite{dodpf}---namely, the number of number fields of degree $\leq 5$ and absolute discriminant at most $X$ is $\sim cX$ for an explicit constant $c$---can be used to handle the case $n=5$ as well: the estimate $O((H^{(2+2\delta)(n+2)/4})$ for the number of possible $K$ is replaced by $O(H^{2+2\delta})$, and so the total number of $f$ is then $$O\left(\frac{H^{2+2\delta}\cdot H\log^{n-1}\!H}{H^{(2+2\delta)/(n(n-1))}}\right)=O(H^{n-1})$$ for $n=4,5$. 
 
 \subsection*{Case III: $C> H^{1+\delta}$ $\implies$ $D >  H^{2+2\delta}$}
 
Finally, we consider those $f$ for which the product $C$ of ramified primes in $K_f$ is greater than $H^{1+\delta}$, which implies (by Proposition~\ref{valindcon}) that $D=|\Disc(K_f)|>H^{2+2\delta}$. 
 
 Fix such an $f$. Then for every prime $p \mid C$, the polynomial $f$ has  either at least a  triple root or at least a pair of double roots modulo $p$.  Thus changing $f$ by a multiple of~$p$ does not change the fact that $p^2\mid \Disc(f)$.  That is,  in the language of \cite{geosieve},  $\Disc(f)$ is a multiple of $p^2$ for ``mod $p$ reasons".  For polynomials $h$ that takes a value that is a multiple of $p^2$ for mod $p$ reasons, we have the following proposition:
 
\begin{proposition}\label{geoprop}
 If $h(x_1,\ldots,x_n)$ is an integer polynomial, such that $h(c_1,\ldots,c_n)$ is a multiple of $p^2$, and $h(c_1+pd_1,\ldots,c_n+pd_n)$ is a multiple of $p^2$ for all $(d_1,\ldots,d_n)\in\Z^n$,  then $\frac\partial{\partial x_n} h(c_1,\ldots,c_n)$ is a multiple of $p$.
\end{proposition}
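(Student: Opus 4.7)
The plan is to apply a Taylor expansion of $h$ around the point $(c_1,\ldots,c_n)$ and read off the linear term modulo $p^2$.

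First I would expand
\begin{equation*}
h(c_1+pd_1,\ldots,c_n+pd_n) = h(c_1,\ldots,c_n) + p\sum_{i=1}^n d_i\,\frac{\partial h}{\partial x_i}(c_1,\ldots,c_n) + p^2\,R(d_1,\ldots,d_n),
\end{equation*}
where $R$ is an integer polynomial in $d_1,\ldots,d_n$ (the higher-order Taylor terms each carry a factor of $p^2$ since the partials of $h$ of order $\geq 2$ come multiplied by $p^{\geq 2}$, and the usual denominators from Taylor's formula are cleared by integrality of the iterated difference operators, i.e., by writing things in terms of the integer polynomial $h$).

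Next, using the hypothesis that both $h(c_1,\ldots,c_n)$ and $h(c_1+pd_1,\ldots,c_n+pd_n)$ are multiples of $p^2$, and that the last term on the right is evidently a multiple of $p^2$, I deduce that
\begin{equation*}
p\sum_{i=1}^n d_i\,\frac{\partial h}{\partial x_i}(c_1,\ldots,c_n) \equiv 0 \pmod{p^2}
\end{equation*}
for every $(d_1,\ldots,d_n)\in\Z^n$. Dividing through by $p$ gives
\begin{equation*}
\sum_{i=1}^n d_i\,\frac{\partial h}{\partial x_i}(c_1,\ldots,c_n) \equiv 0 \pmod{p}
\end{equation*}
for all integer choices of the $d_i$. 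Specializing to $d_n=1$ and $d_i=0$ for $i<n$ then yields $\tfrac{\partial h}{\partial x_n}(c_1,\ldots,c_n)\equiv 0\pmod p$, as required.

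The only mild subtlety is justifying that the Taylor remainder really is an integer polynomial times $p^2$ (rather than say a rational polynomial with possibly $p$ in the denominator). I would handle this by noting that $h(c+pd)-h(c)$, viewed as a polynomial in $d_1,\ldots,d_n$ with integer coefficients, vanishes at $d=0$, so every monomial in it is divisible by at least one $d_i$; pulling out the corresponding factor of $p$ from $pd_i$, the degree-$\geq 2$ part in the $d_i$'s automatically carries a factor of $p^2$. This is the only place where one has to be slightly careful, but it is entirely formal.
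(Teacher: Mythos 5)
Your proof is correct and follows essentially the same approach as the paper's: both expand $h$ around the point $(c_1,\ldots,c_n)$ and read off the linear term modulo $p^2$. The paper streamlines this by fixing $x_1,\ldots,x_{n-1}$ at $c_1,\ldots,c_{n-1}$ from the outset and doing a one-variable expansion in $x_n$, whereas you carry out the full multivariate expansion and then specialize to $d_n=1$, $d_i=0$ for $i<n$; this slight generality is the only difference, and your handling of the integrality of the remainder (via pulling out $p$ factors from the degree-$\geq 2$ monomials in $d$) is sound.
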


\begin{proof}
Write $h(c_1,\ldots,c_{n-1},x_n)$ as  $$h(c_1,\ldots,c_n) + \textstyle\frac\partial{\partial x_n} h(c_1,\ldots,c_n)(x_n-c_n)+(x_n-c_n)^2r(x),$$
where $r(x)$ is an integer polynomial. 
If we set $x_n$ to be an integer $d_n\equiv c_n$ (mod $p$), then the first and last terms above are always multiples of $p^2$, and hence the middle term must be so as well, implying that 
$\textstyle\frac\partial{\partial x_n} h(c_1,\ldots,c_n)$ must be a multiple of $p$. 
\end{proof}

\noindent
Proposition~\ref{geoprop} applied to $\Disc(f)$ implies that $\frac\partial{\partial a_n} \!\Disc(f)$ is a multiple of $C$;  hence so is $$\Disc_{a_n}(\Disc_x(f(x)))=\Res_{a_n}(\Disc(f),\textstyle\frac\partial{\partial a_n} \!\Disc(f)).$$
For such $f$, the polynomial $\mathrm{DD}(a_1,\ldots,a_{n-1})\!:=\!\Disc_{a_n}(\Disc_x(f(x)))$ is a multiple of~$C$. (Note that $\mathrm{DD}$ is not identically zero, as may be seen by examination of $f(x)=x^n+a_{n-1}x+a_n$, which has discriminant $n^na_n^{n-1}-(n-1)^{n-1}(-a_{n-1})^n$; explicit formulae for such iterated discriminants in general have been given in~\cite{LM}.)

The number of $a_1,\ldots,a_{n-1}\in[-H,H]^{n-1}$ such that $\mathrm{DD}(a_1,\ldots,a_{n-1})=0$
is $O(H^{n-2})$ (by, e.g., \cite[Lemma~3.1]{geosieve}), and so the number of $f$ with such $a_1,\ldots,a_{n-1}$ is $O(H^{n-1})$.

We now fix $a_1,\ldots,a_{n-1}$ such that $\mathrm{DD}(a_1,\ldots,a_{n-1})\neq 0$.  Then 
$\mathrm{DD}(a_1,\ldots,a_{n-1})$ has at most $O_\epsilon(H^{\epsilon})$ factors $C>H$. 
Once $C$ is determined by $a_1,\ldots,a_{n-1}$ (up to $O_\epsilon(H^\epsilon)$ possibilities),  then the number of solutions for $a_n$ (mod $C$) to $\Disc(f)\equiv0$ (mod $C$) is $(\deg_{a_n}(\Disc(f)))^{\omega(C)}=O_\epsilon(H^\epsilon)$ (as the number of possibilites for $a_n$ (mod $p$) such that $\Disc(f)\equiv0$ (mod $p$) for each $p \mid C$ is at most $\deg_{a_n}(\Disc(f))$ by the Fundamental Theorem of Algebra).
 Since $C>H$, the number of possibilities for $a_n\in[-H,H]$ is also at most $O_\epsilon(H^\epsilon)$,  and so the total number  of $f$ in this case is  $O_\epsilon(H^{n-1+\epsilon})$.

\smallskip
This proves the desired result also in Case III, up to a factor of $O_\epsilon(H^{\epsilon})$.  To remove the $\epsilon$, we first observe that there are two sources for the $\epsilon$ in the argument. 
The first is that the number of factors $C$ of $\mathrm{DD}(a_1,\ldots,a_{n-1})$ is~$O(H^\epsilon)$.
The second is that, for each choice of $a_1,\ldots,a_{n-1}$ such that $\mathrm{DD}(a_1,\ldots,a_{n-1})\neq 0$, and a choice of factor $C\mid \mathrm{DD}(a_1,\ldots,a_{n-1})$, we have proven that there are $O((n-1)^{\omega(C)})=O(H^\epsilon)$ choices for $a_n$.

To eliminate these sources of $\epsilon$, we will choose a suitable factor $C'$ of $C$ that is either close to $H$ in size (in which case we can handle it by a method analogous to Case I),  or has all prime divisors greater than $H^{\delta/2}$ (in which case we can handle it by a method analogous to Case III, but with no $\epsilon$ because $C'$ will have at most a bounded number of prime factors!).

We thus break into two subcases.

\vspace{-.05in}
\subsubsection*{Subcase (i): $A=\displaystyle\prod_{{\scriptstyle p\mid C}\atop{\scriptstyle p>H^{\delta/2}}} p \leq H$}

\vspace{-.05in} 
In this subcase, $C$ has a factor $B$ between $H^{1+\delta/2}$ and $H^{1+\delta}$, with $A\mid B\mid C$. Let $B$ be the largest such factor. 
Let $D':=\prod_{p\mid B} p^{v_p(D)}$. Then $D'>H^{2+\delta}$.
We now carry out the argument of Case I, with $B$ in place of $C$, and $D'$ in place of $D$. (Note that $D$ determines $C$ determines $B$ determines $D'$.) 
Summing $O(2^{\omega(D')}H^n/D')$ over all squarefull $D'>H^{2+\delta}$ then gives the desired estimate $O(H^{n-1})$ in this subcase: 
$$\sum_{D'>H^{2+\delta} \:\mathrm{ squarefull}} O(c^{\omega(D')}H^n/D')= O_\epsilon(H^{n-1-\delta/2+\epsilon}) =O(H^{n-1}).$$

\vspace{-.15in}
\subsubsection*{Subcase (ii): $A=\displaystyle\prod_{{\scriptstyle p\mid C}\atop{\scriptstyle p>H^{\delta/2}}} p > H$}

\vspace{-.05in} 
In this subcase, we carry out the original argument of Case III, with $C$ replaced by~$A$.
We have $A\mid \mathrm{DD}(a_1,\ldots,a_{n-1}):=\Disc_{a_n}(\Disc_x(f(x)))$. 

Fix $a_1,\ldots,a_{n-1}$ such that $\mathrm{DD}(a_1,\ldots,a_{n-1})\neq 0$.   Being bounded above by a fixed power of $H$, we see that 
$\mathrm{DD}(a_1,\ldots,a_{n-1})$ can have at most a {\bf bounded} number of possibilities for the factor $A$ (since all prime factors of $A$ are bounded below by a fixed positive power of $H$)!
Once $A$ is determined by $a_1,\ldots,a_{n-1}$,   then the number of solutions for $a_n$ (mod $A$) to $\Disc(f)\equiv0$ (mod $A$) is $O((n-1)^{\omega(A)})=O(1)$. 
 Since $A>H$, the total number  of $f$ in this subcase is also   $O(H^{n-1})$.
 
 \vspace{.1in}
 This completes the proof of Theorem~\ref{main1}.

\section{Proof of Theorem~\ref{main2}}
  
In this section, given a permutation group $G\subset S_n$, we take the methods underlying the three cases in the previous section to their natural limits in order to bound $N_n(G,H)$.  We will work out the maximal regimes for the application of the methods behind each of the three cases, then ultimately choose the method that optimizes bounds whenever the methods overlap. 

To this end, let $G\subset S_n$ be a fixed permutation group.  As in the introduction, let $a=a(G)$ denote a constant such that the number of number fields of degree $n$, associated Galois group $G$, and absolute discriminant at most $X$ is $O(X^{a(G)})$.  
As in Section~\ref{main1pf}, given an integer polynomial $f$ having degree $n$ and nonzero discriminant $\Disc(f)$, let $K_f$ denote the \'etale $\Q$-algebra $\Q[x]/(f(x))$, let $C$ denote the product of ramified primes in $K_f$, and let $D:=\Disc(K_f)$ denote the absolute discriminant of $K_f$. Let $k=\ind(G)$.

The methods behind the cases described below correspond to those in Section~\ref{main1pf}, although the regions in $(C,D)$-space here may be somewhat different than those in the previous section. Let  $0< \delta < 1/(n-1)$ be any constant, and let $Y\ll H^{2n-2}$ be a constant that will be optimized later. 

\subsection*{Case I: $C< H^{1+\delta}$ and $D>Y$}

Given an integer $D>Y$, the polynomials $f$ such that $|\Disc(K_f)|=D$ satisfy congruence conditions modulo $C$ that have density $O(c^{\omega(D)}/D)$, where $c=q(k,n-k)$. 

If $C<H$, then the number of such $f$ within the box $\{|a_i|<H\}$ of sidelength~$H$ can be bounded directly to be $O(c^{\omega(D)}H^n/D)$, because the modulus of the congruence conditions being imposed is smaller than the sidelength of the box. To weaken the condition $C<H$, we again use Fourier analysis, as carried out in Section~\ref{Fourier}.  By Proposition~\ref{ftgen}, we obtain the same estimate $O(c^{\omega(D)}H^n/D)$ for the number of monic integer polynomials $f(x)=x^n+a_1x^{n-1}+\cdots+a_n$ of degree $n$ satisfying $|a_i|<H$ for all $i$, $C<H^{1+\delta}$, and $\Disc(K_f)=D$. 

Summing $O(c^{\omega(D)}H^{n}/D)$ over all $k$-powerful $D$ satisfying $Y<D\ll H^{2n-2}$ then yields the estimate
\begin{equation}\label{caseibest}
\sum_{\substack{{Y<D\ll H^{2n-2}}\\[.02in]{D \:\text{is}\:k\text{-powerful}}}}\!\!\!\!\!\!\!\! O(c^{\omega(D)}H^{n}/D) 
= O(Y^{-(k-1)/k}H^{n}\log^{c-1}\!\!H).
\end{equation}
 
\subsection*{Case II: $D\leq Y$}

If $D\leq Y$, then the number of possibilities for $K_f$ is at most $O(Y^{a(G)})$.  Hence, by the aforementioned result of Lemke Oliver and Thorne~\cite{LOT}, the number of possible $f$ such that $D=|\Disc(K_f)|\leq Y$ is 
\begin{equation}\label{caseiibest}
O\left(Y^{a(G)}\cdot \frac{H\log^{n-1}\!H}{Y^{u}}\right)=O\left(Y^{a(G)-u}H\log^{n-1}\!H\right),
\end{equation}
where $u=1/(n(n-1))$ if $G$ is primitive and $u=0$ otherwise.

\subsection*{Case III: $C>H^{1+\delta}$}

In this case, we show that the number of $f$'s is 
\begin{equation}\label{caseiiibest}
O(H^{n+1-k}).
\end{equation}
As in the proof of Theorem~\ref{main1}, we break into two subcases.

\vspace{-.05in}
\subsubsection*{Subcase (i): $A=\displaystyle\prod_{{\scriptstyle p\mid C}\atop{\scriptstyle p>H^{\delta/2}}} p \leq H$}

\vspace{-.05in} 
In this subcase, $C$ again has a factor $B$ between $H^{1+\delta/2}$ and $H^{1+\delta}$, with $A\mid B\mid C$. We let $B$ be the largest such factor, and let $D':=\prod_{p\mid B} p^{v_p(D)}$. Then $D'>H^{k+k\delta/2}$.
We now carry out the argument of Case I, with $B$ in place of $C$, and $D'$ in place of $D$. 
Summing $O(H^nc^{\omega(D')}/D')$ over all $k$-powerful $D'>H^{k(1+\delta/2)}$ then gives the desired estimate (\ref{caseiiibest})  in this subcase: 
$$
\sum_{D'>H^{k(1+\delta/2)} \;k\!\operatorname{th-power-full}}
O(c^{\omega(D')}H^n/D')= O_\epsilon(H^{n-(k-1)(1+\delta/2)+\epsilon}) =O(H^{n+1-k}).$$

\vspace{-.15in}
\subsubsection*{Subcase (ii): $A=\displaystyle\prod_{{\scriptstyle p\mid C}\atop{\scriptstyle p>H^{\delta/2}}} p > H$}

\vspace{-.05in} 
In this subcase, we observe that, by Remark~\ref{irremark}, the variety $V_k$ of monic polynomials of degree $n$ having index $k$ has dimension $n-k$ in the $\A_n$ of all monic polynomials $f(x)=x^n+a_1x^{n-1}+\cdots+a_n$ of degree $n$. Therefore, $V_k$ is contained in the zero set of $k$ polynomials $g_1,g_2,\ldots,g_k$ on $\A^n$, and these $k$ polynomials may be taken to be defined over $\Z$ (indeed, by the same argument as in Proposition~\ref{geoprop}, we may take these $k$ polynomials to be $\Disc(f)$ and its first $k-1$ derivatives with respect to $a_n$).  By elimination theory / taking successive resultants, we may assume that $g_i$ only involves the variables $a_1,\ldots,a_{n-k+i}$. 

Let 
$$W_i(H):=\{f(x)=x^n+a_1x^{n-1}+\cdots+a_n: C>H^{1+\delta}, \,A>H, \,g_1=\cdots=g_i=0, \,g_{i+1}\neq 0\}$$
where, for convenience, we define $g_{k+1}=1$.  We show that $|W_i(H)|=O(H^{n-k+1})$ for each $i=0,\ldots,k$.  \pagebreak 

Thus fix such an $i$.
The number of values of $a_1,\ldots,a_{n-k+i}\in[-H,H]^{n-k+i}$ such~that $g_1=\cdots=g_i=0$ is $O(H^{n-k})$ (by, e.g., \cite[Lemma~3.1]{geosieve}). This immediately yields the desired estimate $|W_i(H)|=O(H^{n-k})=O(H^{n-k+1})$ when $i=k$.  If $i<k$, then there are $O(H)$~choices for $a_{n-k+i+1}$ such that $g_{i+1}\neq 0$.  This gives the desired estimate $|W_i(H)|=O(H^{n-k}\cdot H)=O(H^{n-k+1})$ when $i=k-1$.  If $i<k-1$, then, since $g_{i+1}(a_1,\ldots,a_{n-k+i+1})$ is nonzero and is at most a bounded power of $H$, we see that 
$g_{i+1}(a_1,\ldots,a_{n-k+i+1})$ determines at most a {\bf bounded} number of possibilities for the factor $A$. Once $A$ is determined by $a_1,\ldots,a_{n-k+i+1}$,  the number of possibilities for $a_{n-k+i+2},\ldots,a_n$ (mod~$A$) is also bounded, by Proposition~\ref{indexcodim}!
Hence we conclude, for general $i\in\{0,\ldots,k\}$, that $|W_i(H)|=O(H^{n-k}\cdot H)=O(H^{n-k+1})$, yielding the desired estimate (\ref{caseiiibest}) in this subcase as~well.
  
\subsection*{Conclusion}

If we set $Y\asymp H^{2n-2}$, then Case II alone covers all possibilities, and yields the bound (\ref{caseiibest}) with $Y=H^{2n-2}$. Otherwise, we must sum the estimates from Cases I--III.  The bound (\ref{caseiiibest}) is fixed at $O(H^{n+1-k})$, and will be less than the aforementioned bound coming from Case II if
\begin{equation}\label{abound}
a(G)>\frac{n-k+2/n}{2n-2}.
\end{equation}
If (\ref{abound}) holds, then the sum of the bounds of Cases I and II is minimized for the value of $Y$ where (\ref{caseibest}) and (\ref{caseiibest}) are equal, namely, 
\begin{equation}\label{meq}
Y=H^{\textstyle{\frac{n-1}{a(G)+1-1/k-u}}}(\log H)^{\textstyle{\frac{c-n}{a(G)+1-1/k-u}}}.
\end{equation}
This then yields the bound
\begin{equation}\label{bothiandiibound}
H^{n-(n-1)\textstyle{\frac{1-1/k}{a(G)+1-1/k-u}}}\log^b\! H
\end{equation}
in both Cases I and II, where $b=(c-1)-(c-n){\textstyle{\frac{1-1/k}{a(G)+1-1/k-u}}}<c$. We conclude that
\begin{equation*}\label{nngbound}
N_n(G,H)=O\Bigl(\min\Bigl\{H^{n+1-k}+H^{n- {(n-1){\textstyle{\frac{1-1/k}{a(G)+1-1/k-u}}}}}\log^b \!H,H^{(2n-2)(a(G)-u)+1}\log^{n-1}\!H\Bigr\}\Bigr),
\end{equation*}
yielding Theorem \ref{main2}. 

\section{Proof of Corollary~\ref{main3}}\label{proofofmain3}

\noindent 
{\bf (a):} To prove Corollary~\ref{main3}(a), we note that by Theorem~\ref{genindbound} (or using the explicit classifications of groups having elements moving few letters as in the works of Manning~\cite{Manning2p,Manning3p,Manning10,Manning68,Manning12,Manning14}, which do not use CFSG), we have for any primitive permutation group $G$ on $n$ letters that is not equal to $A_n$ or $S_n$ that $\ind(G)\geq 3$ if $n\geq 9$, \:$\ind(G)\geq 4$ if $n\geq 16$, and $\ind(G)\geq 5$ if $n\geq 25$. 
By Theorem~\ref{primcount}, we may take $a(G)=\frac{n+2}4-1+\frac{1}{\ind(G)}+\epsilon$ for any $\epsilon>0$. 
Plugging these bounds on $\ind(G)$ and $a(G)$ into Theorem~\ref{main2} then yields 
$$
N_n(G,H)\ll H^{n+1-3}+{H}^{n-(n-1)\textstyle\frac{1-1/3}{(n+2)/4-u+\epsilon}}\log^{c}\!H \ll H^{n-2}$$
if $n\geq 10$ (with $N_9(G,H)\ll H^{7.06}$), 
$$
N_n(G,H)\ll H^{n+1-4}+{H}^{n-(n-1)\textstyle\frac{1-1/4}{(n+2)/4-u+\epsilon}}\log^{c}\!H \ll H^{n-2.5}
$$
if $n\geq 16$, and 
$$
N_n(G,H)\ll H^{n+1-5}+{H}^{n-(n-1)\textstyle\frac{1-1/5}{(n+2)/4-u+\epsilon}}\log^{c}\!H \ll H^{n-3}
$$
if $n\geq 28$.

The final assertion of Corollary~\ref{main3}(a) illustrates the power of the results of Liebeck and Saxl~\cite{LS} and Guralnick and Magaard~\cite{gm}, using CFSG, when gets $n$ larger, as well as the advantage of separating the cases of elemental and non-elemental groups.  
First, applying the already impressive result of Babai~\cite{Babai}, we see by Remark~\ref{cfsg} that $\ind(G)\geq 9$ for a primitive group $G\subset S_n$ not equal to $A_n$ or $S_n$ if $n\geq 1090$. We therefore have 
$$
N_n(G,H)\ll H^{n+1-9}+{H}^{n-(n-1)\textstyle\frac{1-1/9}{(n+2)/4-u+\epsilon}}\log^{c}\!H \ll H^{n-3.5} 
$$
if $n\geq 1090$. 

To reduce the threshold 1090 to 53, we note that by Remark~\ref{basicindboundgen} and Theorem~\ref{genindbound}, we have $\ind(G)\geq 13$ for any elemental $G\subset S_n$ not equal to $A_n$ or $S_n$, and $\ind(G)\geq 7$ for any non-elemental~$G$ when $n\geq 53$. By Theorem~\ref{primcount} and (\ref{nonecount2}), we may take $a(G)=\frac{n+2}4-1+\frac{1}{\ind(G)}+\epsilon$ if $G$ is elemental and $a(G)=4$ if $G$ is non-elemental. 
Hence, if $G$ is a primitive permutation group on $n\geq 53$ letters, then
$$
N_n(G,H)\ll H^{n+1-13}+{H}^{n-(n-1)\textstyle\frac{1-1/13}{(n+2)/4-u+\epsilon}}\log^{c}\!H \ll H^{n-3.5}
$$
when $G$ is elemental and 
\begin{equation}\label{none2}
N_n(G,H)\ll H^{n+1-7}+{H}^{n-(n-1)\textstyle\frac{1-1/7}{4-u}}\log^{c}\!H \ll H^{n-6}
\end{equation}
when $G$ is non-elemental.

\bigskip
\noindent 
{\bf (b):} If $G\subset S_n$ is non-elemental, then by Theorem~\ref{nonecount}, 
we may take $a(G)=O(\log^2\!n/\!\sqrt{n})$ where the implied constant is absolute. Therefore, 
$$N_n(G,H)\ll H^{(2n-2)O(\log^2\!n/\!\sqrt{n})+1}\log^{c}\!H \ll H^{b\sqrt{n}\log^2\!n}$$
for some absolute constant $b>0$. 

\bigskip
\noindent 
{\bf (c):} If $G\subset S_n$ is elemental, then by Theorem~\ref{basicindbound}, we have $\ind(G)\geq 3n/14$, while by Theorem~\ref{slot}, we may take $a(G)=r\log^2\!n$ for some constant $r>0$. We therefore have
$$
N_n(G,H)\ll H^{n+1-3n/14}
+{H}^{n-(n-1)\textstyle\frac{1-14/(3n)}{c\log^2\!n}}
\log^{c}\!H \ll H^{n-bn/\!\log^2n}$$
for some absolute constant $b>0$. 

If we apply only the bound $\ind(G)\geq \frac14(\sqrt{n}-1)$ as in Remark~\ref{cfsg}, which does not use the classification of finite simple groups, then the above argument would yield 
$$
N_n(G,H)\ll H^{n+1-\frac14(\sqrt{n}-1)}
+{H}^{n-(n-1)\textstyle\frac{1-4/(\sqrt{n}-1)}{c\log^2\!n}}
\log^{c}\!H \ll H^{n-b\sqrt{n}}$$
for some absolute constant $b>0$. 


\section{Proof of Theorem~\ref{intransitivethm}}

For a monic integer polynomial $f(x)=x^n+a_{1}x^{n-1}+\cdots+a_n$ of degree $n$, define the height $H(f)$ of $f$ by $\max\{|a_i|\}_{1\leq i\leq n}$, and define the Mahler measure $M(f)$ by $\prod_{k=1}^n \max\{1,|r_k|\}$, where $r_1,\ldots,r_n$ denote the (not necessarily distinct) roots of $f$ over $\C$. Then it is known that
$$ {n\choose \lfloor n/2\rfloor}^{-1} H(f) \leq M(f)\leq \sqrt{n+1}\: H(f),$$
so $H(f)=\Theta(M(f))$, i.e., $H(f)=O(M(f))$ and $M(f)=O(H(f))$. The function $M(f)$ is obviously multiplicative, i.e., $M(fg)=M(f)M(g)$, and hence so is $H(f)$ up to bounded factors, i.e., $H(fg)=\Theta(H(f)H(g))$. 

Now if $f$ is a polynomial counted by $N_n(G,H)$, where $G$ is a subdirect product of $G_1, \ldots, G_k$, then $f=f_1\cdots f_k$ where the Galois group of the monic integer polynomial $f_i$ is~$G_i$. 
Suppose $N_{n_i}(G_i,H)=O(H^{\alpha_i}\log^{\beta_i}\!H)$, and assume without loss of generality that $\alpha_1\leq \alpha_2\leq \cdots \leq \alpha_k$.  

We will prove the assertion of the theorem by induction on $k$.  Thus assume the result is true for subdirect products of $G_2\times\cdots\times G_{k}$. Let $M_{n_1}(G_1,H_1)$ denote the number of monic polynomials of degree $n$ having height {\it exactly} $H_1$. Then 
\begin{align*}
N_n(G,H)
&=\sum_{H_1\leq H}M_{n_1}(G_1,H_1) \cdot N_{n_2+\cdots+n_{k}}(G_2\times \cdots \times G_{k},H/H_1)
\\
&\ll \sum_{H_1\leq H} M_{n_1}(G_1,H_1) \cdot 
(H/H_1)^{\alpha_{k}}\log^{\sum_{{{}\atop{{\alpha_i=\alpha_{k}}\atop{i>1}}}}(1+\beta_i)-1}\!(H/H_1)\\
&\ll \sum_{H_1\leq H} H_1^{\alpha_1-1}\log^{\beta_1}\!H_1\cdot (H/H_1)^{\alpha_{k}}\log^{\sum_{{{}\atop{{\alpha_i=\alpha_{k}}\atop{i>1}}}}(1+\beta_i)-1}\!(H/H_1)
\\
&< \sum_{H_1\leq H} H_1^{\alpha_1-1-\alpha_{k}}\log^{\beta_1}\!H_1\cdot H^{\alpha_{k}}\log^{\sum_{{{}\atop{{\alpha_i=\alpha_{k}}\atop{i>1}}}}(1+\beta_i)-1}\!H 
\\
&\ll \log^{\kappa(1+\beta_1)}\!H\cdot H^{\alpha_k}\log^{\sum_{{{}\atop{{\alpha_i=\alpha_{k}}\atop{i>1}}}}(1+\beta_i)-1}\!H  
\end{align*}
where $\kappa=1$ if $\alpha_{1}=\alpha_k$ and $0$ otherwise. This yields the desired result.

\subsection*{Acknowledgments}

I am extremely grateful to Andrew Granville, Benedict Gross, Robert Guralnick, Eilidh McKemmie, Robert Lemke Oliver, Danny Neftin, Andrew O'Desky, Peter Sarnak, Jean-Pierre Serre, Arul~Shankar, Artane Siad, Frank Thorne, Xiaoheng Wang, and the anonymous referees for numerous helpful conversations and comments on earlier drafts of this manuscript. The author was supported by a Simons Investigator Grant and NSF grant~DMS-1001828.

\vspace{.25in}
\noindent {Princeton University}

\noindent {e-mail: bhargava@math.princeton.edu}

\end{document}